\newcommand{\defi}[1]{{\bf \upshape\sffamily #1}}
\DeclareMathOperator{\ShHom}{\mathscr{H}\text{\kern -3pt {\calligra\large om}}\,}
\newcommand{\arxiv}[1]{\href{http://arxiv.org/abs/#1}{{\tt arXiv:#1}}}
\newcommand{\stacks}[1]{\cite[Tag \href{https://stacks.math.columbia.edu/tag/#1}{#1}]{stacks-project}}
\renewcommand{\a}{\alpha}
\newcommand{\bw}{\bigwedge}
\newcommand{\D}{\operatorname{D}}
\newcommand{\rH}{\mathrm{H}}
\newcommand{\onto}{\twoheadrightarrow}
\newcommand{\oo}{\otimes}
\newcommand{\GL}{\operatorname{GL}}
\newcommand{\Hom}{\operatorname{Hom}}
\newcommand{\rk}{\operatorname{rank}}
\newcommand{\SL}{\operatorname{SL}}
\newcommand{\Sym}{\operatorname{Sym}}
\newcommand{\Tor}{\operatorname{Tor}}
\newcommand{\codim}{\operatorname{codim}}
\renewcommand{\det}{\operatorname{det}}
\renewcommand{\ker}{\operatorname{ker}}
\newcommand{\bb}[1]{\mathbb{#1}}
\renewcommand{\rm}[1]{\textrm{#1}}
\newcommand{\mc}[1]{\mathcal{#1}}
\newcommand{\op}[1]{\operatorname{#1}}
\newcommand{\ul}[1]{\underline{#1}}
\def\kk{{\mathbf k}}
\def\AA{{\mathbf A}}
\def\PP{{\mathbf P}}
\def\lra{\longrightarrow}
\newtheorem{theorem}[equation]{Theorem}
\newtheorem*{theorem*}{Theorem}
\newtheorem*{problem*}{Problem}
\newtheorem{lemma}[equation]{Lemma}
\newtheorem{proposition}[equation]{Proposition}
\newtheorem{corollary}[equation]{Corollary}
\newtheorem*{corollary*}{Corollary}
\theoremstyle{definition}
\newtheorem*{definition*}{Definition}
\theoremstyle{remark}
\newtheorem{remark}[equation]{Remark}
\newtheorem*{remark*}{Remark}
\numberwithin{equation}{section}
\begin{document}

\title{Hermite reciprocity and Schwarzenberger bundles}

\author{Claudiu Raicu}
\address{Department of Mathematics, University of Notre Dame, 255 Hurley, Notre Dame, IN~46556\newline
\indent Institute of Mathematics ``Simion Stoilow'' of the Romanian Academy}
\email{craicu@nd.edu}
\urladdr{\url{https://www3.nd.edu/~craicu/}}
\thanks{CR was supported by NSF DMS-1901886.}

\author{Steven V Sam}
\address{Department of Mathematics, University of California, San Diego}
\email{ssam@ucsd.edu}
\urladdr{\url{http://math.ucsd.edu/~ssam/}}
\thanks{SS was supported by NSF DMS-1849173.}

\subjclass[2010]{Primary 13D02}

\date{December 22, 2020}

\keywords{Hermite reciprocity, syzygies, binary forms, Schwarzenberger bundles, canonical curves, supernatural cohomology, secant varieties, Ulrich modules}

\begin{abstract} 
  Hermite reciprocity refers to a series of natural isomorphisms involving compositions of symmetric, exterior, and divided powers of the standard $\SL_2$-representation. We survey several equivalent constructions of these isomorphisms, as well as their recent applications to Green's Conjecture on syzygies of canonical curves. The most geometric approach to Hermite reciprocity is based on an idea of Voisin to realize certain multilinear constructions cohomologically by working on a Hilbert scheme of points. We explain how in the case of $\PP^1$ this can be reformulated in terms of cohomological properties of Schwarzenberger bundles. We then proceed to study these bundles from several perspectives:
  \begin{enumerate}
  \item We show that their exterior powers have supernatural cohomology, arising as special cases of a construction of Eisenbud and Schreyer.
  \item We recover basic properties of secant varieties $\Sigma$ of rational normal curves (normality, Cohen--Macaulayness, rational singularities) by considering their desingularizations via Schwarzenberger bundles, and applying the Kempf--Weyman geometric technique.
  \item We show that Hermite reciprocity is equivalent to the self-duality of the unique rank one Ulrich module on the affine cone $\widehat{\Sigma}$ of some secant variety, and we explain how for a Schwarzenberger bundle of rank $k$ and degree $d\geq k$, Hermite reciprocity can be viewed as the unique (up to scaling) non-zero section of $(\Sym^k\mc{E})(-d+k-1)$.
  \end{enumerate}
\end{abstract}

\maketitle

\section{Introduction}\label{sec:intro}

The goal of this article is to uncover a close relationship between Hermite reciprocity and cohomological properties of Schwarzenberger bundles, and to highlight their importance by connecting to a series of recent results in the literature. Specifically, we discuss the applications of Hermite reciprocity to proving Green's conjecture for rational cuspidal curves \cite{AFPRW}, and for canonical ribbons \cite{SR-bigraded}. We also explain how to recover the description of the class group of a Hankel determinantal ring and its property of having rational singularities \cite{CMSV}, by working on the natural desingularizations, via Schwarzenberger bundles, of the secant varieties to rational normal curves.

Classically, Hermite reciprocity is the statement that the composition of two symmetric powers is commutative when applied to a 2-dimensional vector space, that is, there exists a $\GL_2({\bf C})$-equivariant isomorphism \cite[Exercise~11.34]{FH}
\[
\Sym^m(\Sym^n {\bf C}^2) \cong \Sym^n(\Sym^m {\bf C}^2).
\]
The existence of such an isomorphism can be proven combinatorially by computing the characters of both sides. However, the isomorphism is not unique (it can be chosen independently on each isotypic component), and it may not exist if we replace ${\bf C}$ with a field~$\kk$ of arbitrary characteristic, when the representations involved are no longer completely reducible. To get a correct statement for arbitrary fields, one has to replace $\Sym^m$ with the divided power~$\D^m$. We explain in Section~\ref{sec:Hermite} how to construct a commutative diagram of explicit $\SL_2(\kk)$-equivariant isomorphisms (see also \cite[Exercise~11.35]{FH})
\[
\xymatrix{
 & \bw^m(\Sym^{m+n-1}\kk^2) \ar@{-}[dl]_-{\simeq} \ar@{-}[dr]^-{\simeq} & \\
 \D^m(\Sym^n\kk^2)\ar@{-}[rr]^-{\simeq} & & \Sym^n(\D^m \kk^2) 
 }
\]
and we loosely refer to any one of them as \defi{Hermite (reciprocity) isomorphisms}. Notice that we have relaxed the requirement of $\GL_2$-equivariance to $\SL_2$-equivariance: this is only done to avoid twisting by appropriate powers of the \defi{determinant representation} $\det(\kk^2)=\bw^2\kk^2$ of $\GL_2(\kk)$. For instance, to make the diagonal isomorphisms in the above diagram respect the $\GL_2$-action, one would need to tensor the bottom representations with $(\det(\kk^2))^{\oo{m\choose 2}}$.

We give two constructions of Hermite isomorphisms: the first one uses only elementary multilinear algebra, while the second one uses Schwarzenberger bundles and only some elementary facts of algebraic geometry. A third construction that passes through (a truncation of) the ring of symmetric polynomials is explained in \cite[Section~3]{AFPRW}. The fact that all of the constructions agree is a consequence of a strong compatibility between the Hermite isomorphisms as we vary the parameters. For instance, if we vary $n$ then we get
\begin{equation}\label{eq:vary-n}
\bigoplus_{n\geq 0}\bw^m(\Sym^{m+n-1}\kk^2) \simeq \bigoplus_{n\geq 0}\Sym^n(\D^m \kk^2)
\end{equation}
where the right side is manifestly a polynomial ring, the symmetric algebra $\Sym(\D^m \kk^2)$. A natural action of $\D^m\kk^2$ on the left side makes it into a free $\Sym(\D^m \kk^2)$-module of rank one, with generating set $\bw^m(\Sym^{m-1}\kk^2)\simeq \kk$.

The geometric approach to Hermite reciprocity, suggested by Rob Lazarsfeld, is based on a construction considered in \cite[Section~2]{voisin-JEMS}. Specializing it to the case of $\PP^1$, we get an incidence correspondence
\begin{equation}\label{eq:incidence}
\begin{array}{c}
\xymatrix{
Z \ar[r]^-{\pi_1} \ar[d]_-{\pi_2} & \op{Hilb}^{m}(\PP^1) \simeq \PP^{m} \\
\PP^1
}
\end{array}
\end{equation}
where $Z$ consists of pairs $(\Xi,p)$, where $\Xi$ is a subscheme of $\PP^1$ of length $m$ and $p\in\Xi$. Letting
\[ \mc{E} = \pi_{1*}(\pi_2^*(\mc{O}_{\PP^1}(m+n-1))),\]
one has that $\mc{E}$ is a vector bundle of rank $m$ on $\PP^m$, with $\det(\mc{E}) = \bw^m\mc{E} = \mc{O}_{\PP^m}(n)$. It is noted in \cite[(2.11)]{voisin-JEMS} that there exists an isomorphism
\[
  \bw^m \rH^0(\PP^1,\mc{O}_{\PP^1}(m+n-1)) \simeq \rH^0(\PP^m,\det(\mc{E})),
\]
and upon identifying the left side with $\bw^m(\Sym^{m+n-1}\kk^2)$ and the right side with $\Sym^n(\D^m\kk^2)$, one discovers an instance of Hermite reciprocity. We treat this in more detail in Section~\ref{sec:Hermite-Hilbert}, where we note that $\mc{E}=\mc{E}^m_{n-1}$ is a \defi{Schwarzenberger bundle}, with presentation \cite[Proposition~2]{sch}
\[0\lra\Sym^{n-1}\kk^2 (-1) \lra \Sym^{m+n-1}\kk^2 \oo \mc{O}_{\PP^m} \lra \mc{E} \lra 0.\]
In the diagram (\ref{eq:incidence}), in can be shown that $Z\simeq\PP^1\times\PP^{m-1}$, and that the morphism $\pi_1$ is defined by a linear series of type $(1,1)$. As such, $\mc{E}$ arises as a special case of the construction of \cite[Theorem~6.1]{ES-JAMS}, and is therefore a \defi{supernatural vector bundle}. In Section~\ref{sec:Sch} we give a similar realization, which appears to be new, for all of the exterior powers $\bw^i\mc{E}$. We remark here that the natural generalizations of the Schwarzenberger bundles, obtained by replacing $\PP^1$ in \eqref{eq:incidence} with a higher genus curve, have been also notably used in the Ein--Lazarsfeld proof of the gonality conjecture \cite{ein-laz-gonality} and in the recent work of Ein--Niu--Park on secant varieties of nonsingular curves \cite{ENP}.

In the final chapters we discuss two applications of Hermite reciprocity and Schwarzenberger bundles. The first is to the study of secant varieties of rational normal curves, and is developed in Section~\ref{sec:div-group}. These secant varieties have desingularizations given by the total space of Schwarzenberger bundles (see \cite[Section~2]{sch}, \cite[Section~6]{ott-val}, \cite[Section~3]{ENP}).  We review some basic known properties of the secant varieties using what we have developed, such as showing that they are normal Cohen--Macaulay varieties which have rational singularities, and showing that the minimal free resolution of their homogeneous coordinate rings are given by the Eagon--Northcott complex of a Hankel matrix. We also discuss the classification of the rank one maximal Cohen--Macaulay modules on these varieties and show that there is always a distinguished one which is an Ulrich module and is self-dual. Most of the properties of secant varieties that we present are classical, but the results on rational singularities and on the rank one MCMs have been obtained only recently \cite{CMSV}, based on a more algebraic approach. In Section~\ref{sec:self-duality}, we recast Hermite reciprocity in terms of the self-duality of the rank one Ulrich module, and also show that it characterizes the unique global section of an appropriate twist of a symmetric power of $\mc{E}$.

The second application of the ideas behind Hermite reciprocity is to give a purely algebraic proof of Green's conjecture for generic curves of genus $g$, and is discussed in Section~\ref{sec:syzygies}. There are two different approaches \cites{AFPRW,SR-bigraded}, each of which involves studying a mapping cone and showing that certain comparison maps are surjective. Surprisingly, in both cases, by considering all $g$ at once, the Tor groups involved in these comparison maps can be given the structure of finitely generated modules over polynomial rings, in the spirit of (\ref{eq:vary-n}). This hidden structure is most easily observed using Hermite reciprocity, and we explain the calculations leading up to this observation, referring the reader to the original articles for the rest of the technical aspects.

\subsection*{Notation}

Throughout we fix a field ${\bf k}$. Since all of our results are independent of characteristic and compatible with change of rings, one can work over a general commutative ring with the proper rephrasing. We will make use of standard multilinear functors: $\D^m$ denotes the $m$-th divided power functor, which is the subspace of symmetric tensors in the $m$-th tensor power, $\Sym^m$ denotes the $m$-th symmetric power functor, and $\bigwedge^m$ denotes the $m$-th exterior power functor. For any finite dimensional vector space $V$, we have a natural identification $\D^m(V) = \Sym(V^\vee)^\vee$ (see \cite[Section~3.1]{AFPRW} for a quick discussion of the relevant constructions).

\section{Hermite reciprocity}\label{sec:Hermite}

Consider a $2$-dimensional vector space $U$ (which we identify with its dual space $U^{\vee}$ after picking a volume form, i.e., a nonzero element of $\bigwedge^2 U$). We let $\SL=\SL(U) \cong \SL_2(\kk)$ denote the special linear group, i.e., the group of invertible operators on $U$ of determinant~$1$. The careful reader can upgrade all of the statements in this paper to account for the general linear group $\GL(U)$ by inserting appropriate powers of determinant characters, but we avoided this to simplify the notation. In this section, we give two different constructions of the isomorphisms in the following result and then we show that they agree with each other, and with the isomorphisms constructed in \cite[Section~3]{AFPRW}.

\begin{theorem}[Hermite reciprocity]\label{thm:Hermite}
\addtocounter{equation}{-1}
  \begin{subequations}
    We have natural $\SL(U)$-equivariant isomorphisms:
 \begin{equation}\label{eq:Hermite-skew-sym}
  \bw^m(\Sym^n U) \cong \Sym^{n-m+1}(\D^m U),
 \end{equation}
 \begin{equation}\label{eq:Hermite-sym-div}
  \D^m(\Sym^{n-m} U) \cong \Sym^{n-m}(\D^m U).
 \end{equation}
\end{subequations}
\end{theorem}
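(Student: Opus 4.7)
The plan is to construct explicit natural $\SL(U)$-equivariant maps realizing each isomorphism and to verify that they are isomorphisms by combining a dimension count with a non-degeneracy check that holds uniformly over $\mathbb{Z}$, and hence over any field $\kk$.

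First, I would confirm the dimensions match. Using $\dim \Sym^k U = \dim \D^k U = k+1$, a direct computation shows that both sides of \eqref{eq:Hermite-skew-sym} have dimension $\binom{n+1}{m}$, and both sides of \eqref{eq:Hermite-sym-div} have dimension $\binom{n}{m}$. This reduces each statement to constructing a single natural nonzero map in one direction and showing injectivity.

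Next, for \eqref{eq:Hermite-skew-sym} I would exploit the characteristic-free apolarity action $\D^m U \otimes \Sym^n U \to \Sym^{n-m} U$ (obtained after identifying $U \cong U^\vee$ via a nonzero volume form in $\bw^2 U$) to build a ``generalized Wronskian'' map that sends a simple wedge $f_1 \wedge \cdots \wedge f_m \in \bw^m(\Sym^n U)$ to the polynomial function of degree $n-m+1$ on $\D^m U$ computed by an $m \times m$ determinant of iterated apolarity pairings, viewed as an element of $\Sym^{n-m+1}(\D^m U)$. For \eqref{eq:Hermite-sym-div} I would either construct an analogous polarization map directly, or derive it from \eqref{eq:Hermite-skew-sym} via the natural compatibilities between $\bw^m$, $\D^m$, and $\Sym^n$ encoded in the commutative triangle of the introduction. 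I would then express these $\SL(U)$-equivariant maps in natural integral bases (e.g., those indexed by $m$-element subsets of $\{0, 1, \ldots, n\}$) and verify that the resulting matrix has determinant $\pm 1$, so the isomorphism descends from $\mathbb{Z}$ to any field $\kk$.

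The main obstacle is arranging the construction so that its integrality is transparent. In characteristic zero the result is essentially automatic: both sides agree as $\SL_2$-characters and any equivariant nonzero map between them is forced to be an isomorphism on each isotypic component. The difficulty is that outside characteristic zero these representations are no longer semisimple, so one needs an explicit map with integer entries whose reduction modulo every prime remains nondegenerate. This is exactly why the divided power $\D^m$ must appear on one side rather than the naive $\Sym^m$: it is the correct characteristic-free dual of $\Sym^m$, and building an integral map naturally produces $\D^m$ on the target.
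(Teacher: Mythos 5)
Your proposal outlines a classical "Wronskian/transvectant" strategy, which is genuinely different from either of the paper's two proofs (the paper instead shows that $\bigoplus_{d\ge 0}\bw^m(\Sym^d U)$ is a free rank-one $\Sym(\D^m U)$-module by an explicit surjectivity argument, or else takes the $m$-th exterior power of the Schwarzenberger short exact sequence and computes cohomology of line bundles). However, as written the proposal has several gaps that prevent it from being a proof.

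First, the characteristic-zero reduction is flawed. You assert that since both sides have the same $\SL_2$-character, ``any equivariant nonzero map between them is forced to be an isomorphism on each isotypic component.'' By Schur's lemma a nonzero equivariant map is an isomorphism \emph{or zero} on each irreducible summand; nonzero on the total space does not force nonzero on every summand. So even in characteristic zero you still have to argue that your Wronskian hits every isotypic piece, which is not automatic and is essentially the content of the theorem.

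Second, the construction itself is not pinned down. ``An $m\times m$ determinant of iterated apolarity pairings'' does not unambiguously specify a map $\bw^m(\Sym^n U)\to\Sym^{n-m+1}(\D^m U)$. The version that does work — take the alternating polynomial $\det\bigl(\langle \ell_j^{(n)},f_i\rangle\bigr)_{i,j}$ in auxiliary linear forms $\ell_1,\dots,\ell_m$, divide by the Vandermonde $\prod_{i<j}(\ell_i-\ell_j)$, and symmetrize to a degree-$(n-m+1)$ polynomial in $g=\ell_1\cdots\ell_m$ — requires justifying divisibility by the Vandermonde over $\mathbb Z$ (including characteristic $2$) and that the symmetric result is genuinely a polynomial in the coefficients of $g$ (i.e., lies in $\Sym^{n-m+1}(\D^m U)$ rather than merely being a symmetric function of the roots). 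These are the points where an integral argument can quietly fail, and they are left unaddressed.

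Third, the decisive step — that ``the resulting matrix has determinant $\pm 1$'' in suitable integral bases — is stated but not performed. This is precisely where all the work lies; the paper's Proposition~\ref{prop:hermite-free} does the analogous work by exhibiting an explicit triangular set of generators for the multiplication map. Finally, \eqref{eq:Hermite-sym-div} is not constructed: you gesture at ``an analogous polarization map'' or at deducing it from \eqref{eq:Hermite-skew-sym} via the commutative triangle, but that triangle (Theorem~\ref{thm:comp-Her}) is a nontrivial statement proved later in the paper, not something one can invoke for free. The dimension count you open with is correct and useful, but the rest needs to be filled in before this is a proof.
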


\subsection{An algebraic construction} \label{sec:hermite-alg}

Given vector spaces $A$ and $B$, we have a canonical map
\begin{equation}\label{eq:Dwegde-wedge}
  \D^m A \otimes \bigwedge^m B \to \bigwedge^m(A \otimes B)
\end{equation}
given by realizing $\bigwedge^m B$ as the skew-symmetric tensors in $B^{\otimes m}$. More precisely, if $\sum_\alpha a_{\alpha_1} \otimes \cdots \otimes a_{\alpha_m}$ is invariant and $\sum_\beta b_{\beta_1} \otimes \cdots \otimes b_{\beta_m}$ is skew-invariant, then $\sum_{\alpha, \beta} (a_{\alpha_1} \otimes b_{\beta_1}) \otimes \cdots \otimes (a_{\alpha_m} \otimes b_{\beta_m})$ is also skew-invariant. Similarly, we have a canonical map
\[
  \D^m A \otimes \D^m B \to \D^m(A \otimes B).
\]

In particular, we can define multiplication maps
\begin{subequations}
  \begin{align}
  \D^m U \otimes \bigwedge^m(\Sym^d U) &\to \bigwedge^m(\Sym^{d+1} U) \label{eqn:hermite-mult}\\
  \D^m U \otimes \D^m(\Sym^d U) &\to \D^m(\Sym^{d+1} U)
\end{align}
\end{subequations}
by using the canonical maps above followed by either the functor $\bigwedge^m$ or $\D^m$ applied to the multiplication map $U \otimes \Sym^d U \to \Sym^{d+1} U$.
If we do it twice, the resulting map is invariant under swapping the copies of $\D^m U$, so $\bigoplus_{d \ge 0} \bigwedge^m (\Sym^d U)$ and $\bigoplus_{d \ge 0} \D^m(\Sym^d U)$ acquire the structure of modules over $\Sym(\D^m U)$.

\begin{proposition} \label{prop:hermite-free}
  If $U$ has basis $\{1,x\}$, then $\bigoplus_{d \ge 0} \bigwedge^m (\Sym^d U)$ is a free $\Sym(\D^m U)$-module of rank one. If $m \ge 1$ then the generator is $x^{m-1} \wedge x^{m-2} \wedge \cdots \wedge 1$ in degree $m-1$, and if $m=0$ then the generator is in degree $0$. Similarly, $\bigoplus_{d \ge 0} \D^m(\Sym^d U)$ is a free $\Sym(\D^m U)$-module of rank one generated in degree~$0$.
\end{proposition}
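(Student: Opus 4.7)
The plan is to match Hilbert series on both sides, reduce freeness to surjectivity of the one-step $\D^m U$-action maps, and then verify that surjectivity via an explicit combinatorial formula for the action.

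Since $\dim_{\kk} \D^m U = m + 1$, the Hilbert series of $\Sym(\D^m U)$ is $(1-t)^{-(m+1)} = \sum_{e \geq 0} \binom{m+e}{m} t^e$. The dimensions $\binom{d+1}{m}$ of $\bigwedge^m(\Sym^d U)$ and $\binom{d+m}{m}$ of $\D^m(\Sym^d U)$, after the stated degree shifts (by $m-1$ and $0$ respectively), produce the same Hilbert series. Hence both claims reduce to surjectivity of the evaluation map from $\Sym(\D^m U)$ sending $1$ to the stated generator. Since its image contains the generator and is stable under the $\D^m U$-action, by induction on $d$ it suffices to prove that the one-step multiplication maps in \eqref{eqn:hermite-mult} are surjective in each relevant degree.

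For the wedge case, I use the divided-power basis $\xi_k := 1^{[m-k]} x^{[k]}$ of $\D^m U$ (for $k = 0, \ldots, m$) and the wedge basis $\omega_I := \bigwedge_{i \in I} x^i$ of $\bigwedge^m (\Sym^d U)$ indexed by $m$-element subsets $I \subseteq \{0, \ldots, d\}$. A direct unwinding of \eqref{eq:Dwegde-wedge} composed with multiplication yields
\[
\xi_k \cdot \omega_I \;=\; \sum_{\substack{S \subseteq I,\; |S| = k \\ S \text{ upward-closed}}} \omega_{(I \setminus S)\, \cup\, (S + 1)},
\]
where \emph{upward-closed} means $s \in S$ and $s+1 \in I$ force $s+1 \in S$, and each summand appears with coefficient $+1$.

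Given a target $\omega_J$ in $\bigwedge^m(\Sym^{d+1} U)$: if $d + 1 \notin J$, then $\omega_J = \xi_0 \cdot \omega_J$ lies in the image via the natural inclusion. Otherwise, let $p$ be maximal with $\{0, \ldots, p-1\} \subseteq J$, write $J = \{0, \ldots, p-1\} \cup \{j_1 < \cdots < j_{m-p}\}$ with $j_{m-p} = d+1$, and take $I := \{0, \ldots, p-1\} \cup \{j_1 - 1, \ldots, j_{m-p} - 1\} \subseteq \{0, \ldots, d\}$. The top $m - p$ elements of $I$ form an upward-closed subset $S_0$ whose contribution to $\xi_{m-p} \cdot \omega_I$ is $\omega_J$; every other upward-closed $S$ of size $m - p$ omits $\max I = d$ and so produces $\omega_{J'}$ with $\max J' \leq d$, already in the image by the first case. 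Rearranging shows $\omega_J$ is in the image, completing surjectivity. An entirely analogous combinatorial formula and argument handles the $\D^m$ version, with divided-power monomials replacing wedges. The main obstacle is the derivation of the explicit formula for $\xi_k \cdot \omega_I$ in a characteristic-free manner: showing each nonzero coefficient equals $+1$ requires a uniqueness argument for the induced matching $I' \to I$ (by a descending induction on the symmetric difference of the two matchings' label sets) together with a check that this matching is order-preserving.
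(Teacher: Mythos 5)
Your overall strategy mirrors the paper's: reduce to surjectivity of each one-step map $\D^m U \otimes \bigwedge^m(\Sym^d U) \to \bigwedge^m(\Sym^{d+1} U)$, then conclude freeness by matching dimensions. The paper also proves surjectivity by picking a preimage $\omega_I$ and multiplying by a suitable $x^{(k)}$, then handling the extra terms by induction — but the paper's induction is on the number of ``top-heavy'' positions (the maximal $j$ with $d_j = d+2-j$), not on your parameter $p$.

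There is a genuine gap in your surjectivity step. You claim that when $d+1 \in J$, after forming $I$ and multiplying by $\xi_{m-p}$, every upward-closed $S \neq S_0$ of size $m-p$ omits $\max I = d$, so the extra terms all avoid $d+1$. This is false. Take $m=3$, $d=3$, $J = \{0,3,4\}$. Then $p=1$, $I = \{0,2,3\}$, $S_0 = \{2,3\}$, and you multiply by $\xi_2$. But $S = \{0,3\}$ is also upward-closed of size $2$: $0+1 = 1 \notin I$ and $3+1 = 4 \notin I$, so neither condition bites. It contains $3 = \max I$, and produces $J' = (I\setminus S)\cup(S+1) = \{2\}\cup\{1,4\} = \{1,2,4\}$, which contains $4 = d+1$. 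So your induction does not close as written. The argument can be repaired by observing that any such rogue $S$ must meet $\{0,\ldots,p-1\}$ (since $S \neq S_0$ forces this by a cardinality count), and letting $i$ be the smallest element of $S\cap\{0,\ldots,p-1\}$, one checks $i \notin J'$, hence $p(J') \leq i < p$; so a secondary induction on $p$ starting from $p=0$ works. But you did not do this, and the statement you invoke to bypass it is incorrect. Separately, you describe the verification that each nonzero coefficient in the formula for $\xi_k\cdot\omega_I$ is $+1$ as ``the main obstacle'' and only sketch it; the paper sidesteps this by working with position-indexed subsets, where distinct subsets $S\subseteq\{1,\ldots,m\}$ manifestly yield distinct sorted exponent tuples when nonzero, so no uniqueness argument is needed.
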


\begin{proof}
  The $m=0$ case is obvious, so we assume that $m>0$. We claim that the multiplication $\D^m U \otimes \bigwedge^m(\Sym^d U) \to \bigwedge^m(\Sym^{d+1} U)$ is surjective if $d\geq m-1$. First, $\D^m U$ has a basis 
  \[\{x^{(k)} \mid 0 \le k \le m\}\]
  where $x^{(k)}$ is the sum over all $\binom{m}{k}$ ways of tensoring $k$ copies of $x$ and $m-k$ copies of $1$. The multiplication map is then described by
  \[
    x^{(k)} \otimes (x^{d_1} \wedge \cdots \wedge x^{d_m}) \mapsto \sum_{\substack{S \subseteq \{1,\dots,m\}\\ |S|=k}} x^{d'_1} \wedge \cdots \wedge x^{d'_m}
  \]
where $d'_j=d_j$ if $j \notin S$ and $d'_j=d_j+1$ if $j \in S$. Now consider an element of the form $x^{d_1} \wedge \cdots \wedge x^{d_m}$ with $d_1 > \cdots > d_m$. Let $j \ge 0$ be maximal such that $d_j = d+2-j$ (we set $j=0$ if $d_i \ne d+2-i$ for all $i$). We show how $x^{d_1} \wedge \cdots \wedge x^{d_m}$ is in the image of the multiplication map by induction on $j$. If $j=0$, then $d+1 \notin \{d_1,\dots,d_m\}$, so we can multiply $x^{d_1} \wedge \cdots \wedge x^{d_m}$ by $x^{(0)}$. Otherwise, if $j>0$, multiply $x^{d_1-1} \wedge x^{d_2-1} \cdots x^{d_j-1} \wedge x^{d_{j+1}} \cdots x^{d_m}$ by $x^{(j)}$. This is a sum of the term we want together with terms covered by our induction hypothesis, so the claim is proven.

It follows from the prove above that $\bigoplus_{d \ge 0} \bigwedge^m (\Sym^d U)$ is a cyclic $\Sym(\D^m U)$-module, generated by $x^{m-1} \wedge x^{m-2} \wedge \cdots \wedge 1$ in degree $m-1$. Since 
\[\dim \bigwedge^m(\D^{m-1+d} U) ={m+d\choose d} = \dim \Sym^d(\D^m(U))\text{ for all }d\geq 0,\] 
it follows that $\bigoplus_{d \ge 0} \bigwedge^m (\Sym^d U)$ is a free module, as desired.

The symmetric case is similar: we just need to show that the multiplication map $\D^m U \otimes \D^m(\Sym^d U) \to \D^m(\Sym^{d+1} U)$ is surjective. We have a basis of $\D^m(\Sym^d U)$ consisting of the sum of the unique permutations of $x^{d_1} \otimes \cdots \otimes x^{d_m}$ where $d \ge d_1 \ge \cdots \ge d_m \ge 0$, which we denote by $x^{d_1} \cdots x^{d_m}$. The product of $x^{(k)}$ with $x^{d_1} \cdots x^{d_m}$ is almost as before, namely, it is a sum (with coefficients) of $x^{d'_1} \cdots x^{d'_m}$ where $d+1 \ge d'_1 \ge \cdots d'_m \ge 0$ and $(d'_1,\dots,d'_m)$ is obtained by adding $1$ to $k$ of the $d_i$ and sorting. The exact coefficients will not be relevant, rather we will show that $x^{d_1} \cdots x^{d_m}$ is in the image of the multiplication map by induction on how many exponents are equal to $d+1$. If there are none, then this is the product of $x^{(0)}$ and $x^{d_1} \cdots x^{d_m} \in \D^m(\Sym^d U)$. Otherwise, if $d_k=d+1>d_{k+1}$, then consider the product of $x^{(k)}$ with $x^d \cdots x^d x^{d_{k+1}} \cdots x^{d_m} \in \D^m(\Sym^d U)$. It will contain $x^{d+1} \cdots x^{d+1} x^{d_{k+1}} \cdots x^{d_m}$ with coefficient 1, and all other terms will have less than $k$ exponents equal to $d+1$. These latter terms are in the image of the multiplication map by induction, so we are done.
\end{proof}

\begin{proof}[Proof of Theorem~\ref{thm:Hermite}]
  By Proposition~\ref{prop:hermite-free}, the multiplication map
  \[
    \Sym^{n-m+1} (\D^m U) \otimes \bigwedge^m(\Sym^{m-1} U) \to \bigwedge^m(\Sym^n U)
  \]
  is an isomorphism, and $\bigwedge^m(\Sym^{m-1} U)\simeq\kk$ is a trivial $\SL(U)$-representation.
  Similarly, the multiplication map
  \[
    \Sym^{n-m} (\D^m U) \otimes \D^m(\Sym^0 U) \to \D^m(\Sym^{n-m} U)
  \]
  is an isomorphism and $\D^m(\Sym^0 U)\simeq\kk$.
\end{proof}

\subsection{Via Schwarzenberger bundles}\label{sec:Hermite-Hilbert}

We let $\PP^m = \op{Proj}(\Sym(\D^m U))$.
%\[
%  \PP^1 = \op{Proj}(\Sym U)\mbox{ and }\PP^m = \op{Proj}(\Sym(\D^m U)).
%\]
For $d \ge 1$, consider the composition $\Sym^d U \otimes \mc{O}_{\PP^m}(-1)\to \Sym^d U \otimes \Sym^m U \to \Sym^{d+m} U$. This has locally constant rank, so we have a locally free sheaf $\mc{E}_d^m$, the \defi{Schwarzenberger bundle} on $\PP^m$ defined by the short exact sequence
\begin{subequations}
\begin{equation}\label{eq:ses-Edm}
0 \lra \Sym^d U (-1) \lra \Sym^{d+m}U \oo \mc{O}_{\PP^m} \lra \mc{E}_d^m \lra 0.
\end{equation}
It follows from (\ref{eq:ses-Edm}) that
\begin{equation}\label{eq:rank-det-Emd}
\rk(\mc{E}^m_d) = m\quad\mbox{ and }\quad\det(\mc{E}^m_d)= \mc{O}_{\PP^m}(d+1).
\end{equation}
\end{subequations}

\begin{proof}[Proof of Theorem~\ref{thm:Hermite}]
  The $m$-th exterior power of \eqref{eq:ses-Edm} with $d=n-m$ gives a resolution $\mc{F}^n_{\bullet}$ of $\det(\mc{E}^m_{n-m}) \cong \mc{O}_{\PP^m}(n-m+1)$ by locally free sheaves, where
\[
  \mc{F}^n_i = \bw^{m-i}(\Sym^n U) \oo \D^i(\Sym^{n-m}U) \oo \mc{O}_{\PP^m}(-i),\mbox{ for }i=0,\dots,m.
\]

 Since the sheaves $\mc{F}^n_i$ have no cohomology for $i=1,\dots,m$, it follows that
 \[
   \bw^m(\Sym^n U) = \rH^0(\PP^m,\mc{F}^n_0) = \rH^0(\PP^m,\det(\mc{E}^m_{n-m})) = \Sym^{n-m+1}(\D^m U),
 \]
 proving (\ref{eq:Hermite-skew-sym}). To prove (\ref{eq:Hermite-sym-div}), we note that $\det(\mc{E}^m_{n-m}) \oo \mc{O}_{\PP^m}(-1) \cong \mc{O}_{\PP^m}(n-m)$. Since the sheaves $\mc{F}^n_i(-1)$ have no cohomology for $i=0,\dots,m-1$, it follows that
 \[
   \D^m(\Sym^{n-m} U) = \rH^m(\PP^m,\mc{F}^n_m(-1)) = \rH^0(\PP^m,\det(\mc{E}^m_{n-m})(-1)) = \Sym^{n-m}(\D^m U).\qedhere
 \]
\end{proof}

\begin{remark}
\addtocounter{equation}{-1}
  \begin{subequations}
  We have a natural identification between $\PP^m$ and $\op{Hilb}^m(\PP^1)$ (the Hilbert scheme of $m$ points on $\PP^1$), where a point $[f]\in\PP^m$ with $0\neq f\in\Sym^m U$ corresponds to the zero locus of $f\in \rH^0(\PP^1,\mc{O}(m))$. The incidence correspondence
\[
  Z = \{ ([f],[p]) \in \PP^m \times \PP^1 \mid f(p)=0\},
\]
is defined by an exact sequence
\begin{equation}\label{eq:ses-def-Z}
 0 \lra \mc{O}_{\PP^m \times \PP^1}(-1,-m) \lra \mc{O}_{\PP^m \times \PP^1} \lra \mc{O}_Z \lra 0,
\end{equation}
where the defining equation of $Z$ is given by the unique $\SL(U)$-invariant subspace in
\[
  \rH^0(\PP^m \times \PP^1, \mc{O}_{\PP^m \times \PP^1}(1,m)) = \D^m U \oo \Sym^m U.
\]
In coordinates, if $z_0,\dots,z_m$ are the coordinate functions in $\PP^m$, and $x,y$ are those in $\PP^1$, then $Z$ is defined by the equation
\[
  z_0 x^m + z_1 x^{m-1}y + \cdots + z_i x^{m-i}y^i + \cdots +z_m y^m = 0.
\]
We write $\pi_1 \colon Z \lra \PP^m$ and $\pi_2 \colon Z \lra \PP^1$ for the natural projections. Given $n \ge m$, it follows from \eqref{eq:ses-def-Z} that we have
\[
  \mc{E}^m_{n-m} = {\pi_1}_*(\pi_2^* \mc{O}_{\PP^1}(n)) = {\pi_1}_*(\mc{O}_Z(0,n)).
\]
In Section~\ref{sec:Sch} we generalize the isomorphism above to all exterior powers of Schwarzenberger bundles, showing that they can be realized as direct images of line bundles on a product of projective spaces.
%which is locally free of rank $m$. It follows from (\ref{eq:ses-def-Z}) that for $b\geq a-1$ we get a short exact sequence
%\begin{equation}\label{eq:Mb-as-coker}
% 0 \lra \Sym^{b-a}U \oo \mc{O}_{\PP^a}(-1) \lra  \Sym^{b}U \oo \mc{O}_{\PP^a} \lra \mc{M}^b \lra 0,
%\end{equation}
%and by taking determinants we get $\det(\mc{M}^b) = \bw^a\mc{M}^b \simeq \mc{O}_{\PP^a}(b-a+1)$.
\end{subequations}
\end{remark}

\subsection{The isomorphisms agree}

We now prove that the Hermite reciprocity isomorphisms given in Section~\ref{sec:hermite-alg} and Section~\ref{sec:Hermite-Hilbert} agree with each other and also with the one given by \cite[Lemma~3.3]{AFPRW}. We focus on \eqref{eq:Hermite-skew-sym}. The proof for \eqref{eq:Hermite-sym-div} is similar, so we omit it. To do this, we check that there is a commutative diagram
\begin{equation}\label{eq:comm-wedge-sym}
 \begin{gathered}
 \xymatrix{
\bw^m(\Sym^n U) \oo \D^m U \ar[r]\ar[d] &  \bw^m(\Sym^{n+1} U) \ar[d] \\
 \Sym^{n-m+1}(\D^m U)  \oo \D^m U \ar[r] & \Sym^{n-m+2}(\D^m U) \\
 }
 \end{gathered}
\end{equation}
where the vertical maps are isomorphisms induced by the isomorphism in Section~\ref{sec:Hermite-Hilbert}, the bottom map is the natural multiplication, and the top map is \eqref{eqn:hermite-mult}. Since the Hermite reciprocity isomorphisms are characterized by the commutativity of (\ref{eq:comm-wedge-sym}), as seen in the proof of \cite[Lemma~3.3]{AFPRW}, we conclude by verifying the following.
% induced by the composition of natural maps
% \[
%   \bw^m(\Sym^n U) \oo \D^m U \lra \bw^m(\Sym^n U \oo U) \lra \bw^m(\Sym^{n+1} U).
% \]

\begin{proposition}\label{prop:comm-diag}
 The diagram (\ref{eq:comm-wedge-sym}) is commutative.
\end{proposition}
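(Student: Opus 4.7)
The plan is to prove the commutativity of \eqref{eq:comm-wedge-sym} by lifting the diagram to one of coherent sheaves on $\PP^m$ and applying $H^0$. The central ingredient is a natural surjection of Schwarzenberger bundles that intertwines the algebraic multiplication \eqref{eqn:hermite-mult} with the geometric multiplication of sections of $\mc{O}_{\PP^m}$.

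First, I would construct a natural sheaf surjection $\bar\mu \colon U \oo \mc{E}^m_{n-m} \twoheadrightarrow \mc{E}^m_{n-m+1}$. Tensor \eqref{eq:ses-Edm} (for $d = n-m$) with $U$, and use the multiplication maps $U \oo \Sym^d U \to \Sym^{d+1} U$ (for $d \in \{n-m, n\}$) to produce a morphism of exact sequences to \eqref{eq:ses-Edm} with $d = n-m+1$. The relevant square commutes by associativity of multiplication in $\Sym U$, so the induced map on cokernels defines $\bar\mu$, which is surjective by the snake lemma.

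Next, I would verify that the following square of sheaves on $\PP^m$ commutes:
\[
\xymatrix{
\D^m U \oo \bw^m(\Sym^n U) \oo \mc{O} \ar[r] \ar[d] & \bw^m(\Sym^{n+1} U) \oo \mc{O} \ar[d] \\
\D^m U \oo \det(\mc{E}^m_{n-m}) \ar[r] & \det(\mc{E}^m_{n-m+1})
}
\]
Here the vertical maps are $\bw^m$ of the Schwarzenberger quotients $\Sym^{d+m}U \oo \mc{O} \twoheadrightarrow \mc{E}^m_d$, the top map is the algebraic multiplication \eqref{eqn:hermite-mult}, and the bottom map is the composition $\D^m U \oo \bw^m(\mc{E}^m_{n-m}) \to \bw^m(U \oo \mc{E}^m_{n-m}) \to \bw^m(\mc{E}^m_{n-m+1})$, where the first arrow is \eqref{eq:Dwegde-wedge} and the second is $\bw^m \bar\mu$. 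A direct element chase shows that both paths send $f \oo (v_1 \wedge \cdots \wedge v_m)$, with $f = \sum_\alpha f_{\alpha,1} \oo \cdots \oo f_{\alpha,m}$ representing $f \in \D^m U$ as a symmetric tensor, to the common expression $\sum_\alpha \overline{f_{\alpha,1} \cdot v_1} \wedge \cdots \wedge \overline{f_{\alpha,m} \cdot v_m}$ in $\det(\mc{E}^m_{n-m+1})$.

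Finally, under the canonical identifications $\det(\mc{E}^m_d) \simeq \mc{O}_{\PP^m}(d+1)$ from Section~\ref{sec:Hermite-Hilbert}, I would check that this bottom sheaf map agrees with the standard multiplication $H^0(\mc{O}_{\PP^m}(1)) \oo \mc{O}_{\PP^m}(n-m+1) \to \mc{O}_{\PP^m}(n-m+2)$. Both are $\SL$-equivariant morphisms of the same line bundles, hence differ by a scalar; the surjectivity of $\bar\mu$ forces this scalar to be nonzero, and the compatibility of the isomorphisms $\det(\mc{E}^m_d) \simeq \mc{O}_{\PP^m}(d+1)$ across $d$ (all coming uniformly from the Schwarzenberger SES) pins it to $1$. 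Passing to global sections and invoking the cohomology vanishings already used in the proof of Theorem~\ref{thm:Hermite} in Section~\ref{sec:Hermite-Hilbert} then yields \eqref{eq:comm-wedge-sym}. The main technical point is this last step: tracking the canonical isomorphisms $\det(\mc{E}^m_d) \simeq \mc{O}_{\PP^m}(d+1)$ carefully enough to conclude that the comparison scalar is $1$ on the nose, rather than merely nonzero, since $\SL$-equivariance alone is not enough to eliminate scalar ambiguity in the presence of multiplicities in the plethystic decomposition of $\Sym^{n-m+1}(\D^m U)$.
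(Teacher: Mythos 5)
Your proof takes essentially the same approach as the paper's. Both constructions build the map $\bar\mu \colon U \otimes \mc{E}^m_{n-m} \to \mc{E}^m_{n-m+1}$ from the morphism of defining short exact sequences, take $m$-th exterior powers to get the paper's map (\ref{eq:mult-det-Mb}), and compare the two rows of (\ref{eq:comm-wedge-sym}) by passing to global sections. The presentational difference is that the paper lifts (\ref{eq:mult-det-Mb}) to a full morphism of resolutions $\mc{F}^n_\bullet \otimes \D^m U \to \mc{F}^{n+1}_\bullet$, whereas you verify directly the commutativity of the square formed by the degree-$0$ terms and the augmentations $\mc{F}^n_0 \twoheadrightarrow \det(\mc{E}^m_{n-m})$; for this proposition only that one square is needed, so your version is more economical. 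You also explicitly isolate the step of matching the induced map on determinants with the standard multiplication on $\mc{O}_{\PP^m}(d+1)$, which the paper handles implicitly in the sentence identifying ``the bottom map'' with $\rH^0$ of (\ref{eq:mult-det-Mb}).

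One quibble about your final paragraph: the worry that ``$\SL$-equivariance alone is not enough \dots\ in the presence of multiplicities in the plethystic decomposition of $\Sym^{n-m+1}(\D^m U)$'' points at the wrong space. The comparison takes place in
\[
\Hom_{\mc{O}_{\PP^m}}\bigl(\mc{O}(n-m+1) \otimes \D^m U,\, \mc{O}(n-m+2)\bigr)^{\SL} \;\cong\; \End_{\SL}(\D^m U),
\]
not in any $\Hom$ out of $\Sym^{n-m+1}(\D^m U)$. Since $\D^m U$ is the Weyl module for $\SL_2$ with highest weight $m$ (in any characteristic, it is cyclic, generated by its highest weight vector, with simple head), one has $\End_{\SL}(\D^m U) = \kk$. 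So the two maps genuinely differ by a scalar; the only residual point is the normalization of the isomorphisms $\det(\mc{E}^m_d) \simeq \mc{O}_{\PP^m}(d+1)$, which is at the same level of implicitness as in the paper.
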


\addtocounter{equation}{-1}
\begin{subequations}
  \begin{proof}
    The following square commutes (the vertical maps are the usual multiplication maps)
    \[
      \xymatrix{ 0 \ar[r] & \Sym^{n-m} U \otimes U(-1) \ar[r] \ar[d] & \Sym^n U \otimes U \ar[d] \\
        0 \ar[r] & \Sym^{n-m+1} U (-1) \ar[r] & \Sym^{n+1} U },
    \]
    % The natural map $U \oo \mc{O}_{\PP^1} \lra \mc{O}_{\PP^1}(1)$ induces a map
    which induces a map on cokernels
 \begin{equation}\label{eq:EU-to-next-E}
   \mc{E}^m_{n-m} \oo U \lra \mc{E}^{m}_{n+1-m}.
 \end{equation}
 This in turn gives rise to natural maps
  \begin{equation}\label{eq:mult-det-Mb}
  \bw^m\mc{E}^m_{n-m} \oo \D^m U \lra \bw^m(\mc{E}^m_{n-m} \oo U) \lra \bw^m\mc{E}^m_{n+1-m},
  \end{equation}
 which lifts to a map of resolutions $\mc{F}^n_{\bullet} \oo \D^m U \lra \mc{F}^{n+1}_{\bullet}$, as follows. For each $i=0,\dots,m$, the map $\mc{F}^n_{i} \oo \D^m U \lra \mc{F}^{n+1}_{i}$ is given as the composition
 \[
   \bw^{m-i}(\Sym^n U) \oo \D^i(\Sym^{n-m}U) \oo \D^m U \to \left(\bw^{m-i}(\Sym^n U) \oo \D^{m-i}U\right) \oo \left(\D^i(\Sym^{n-m}U) \oo \D^i U\right)
 \]
 \[
   \to \bw^{m-i}(\Sym^n U \oo U) \oo \D^i(\Sym^{n-m}U \oo U) \to \bw^{m-i}(\Sym^{n+1} U) \oo \D^i(\Sym^{n-m+1}U),
 \]
 where all maps are induced by multiplication and comultiplication. The map on global sections
 \[
   \rH^0(\PP^m,\mc{F}^n_{0} \oo \D^m U) \lra \rH^0(\PP^m,\mc{F}^{n+1}_0)
 \]
 is the top map in \eqref{eq:comm-wedge-sym}, while the bottom map in \eqref{eq:comm-wedge-sym} is the map induced from \eqref{eq:mult-det-Mb} by taking global sections, from which the commutativity follows.
\end{proof}
\end{subequations}

\subsection{Compatibility of Hermite isomorphisms}\label{subsec:comp-Her}

We end our discussion of Hermite reciprocity by constructing one last isomorphism and discussing its compatibility with the ones from Theorem~\ref{thm:Hermite}. We define $\alpha$ as the composition
\[
\xymatrix{
\D^m(\Sym^{n-m}U)  \ar[r]^-{\simeq} \ar@/^2pc/[rr]^-\alpha & \D^m(\Sym^{n-m}U) \oo \bw^m(\Sym^{m-1}U) \ar[r] & \bw^m(\Sym^{n-1}U) \\
}
\]
where the first isomorphism follows from the identification $\kk \cong \bw^m(\Sym^{m-1}U)$ given by $1 \mapsto x^{m-1} \wedge \cdots \wedge x \wedge 1$, while the second map is induced by (\ref{eq:Dwegde-wedge}) and the multiplication $\Sym^{n-m}U\oo\Sym^{m-1}U\lra\Sym^{n-1}U$.

\begin{theorem}\label{thm:comp-Her}
 The map $\alpha$ is an isomorphism, and we have a commutative diagram
 \[
 \xymatrix{
 & \D^m(\Sym^{n-m}U) \ar[dl]_{\alpha} \ar[dr]^{\gamma} & \\
 \bw^m(\Sym^{n-1}U) \ar[rr]^-\beta & & \Sym^{n-m}(\D^m U) 
 }
 \]
 where $\beta$ comes from (\ref{eq:Hermite-skew-sym}) and $\gamma$ from (\ref{eq:Hermite-sym-div}).
\end{theorem}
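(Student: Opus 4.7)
The plan is to reinterpret $\alpha$ as multiplication by the generator $e_0 := x^{m-1}\wedge\cdots\wedge x\wedge 1$ of $\bw^m(\Sym^{m-1}U)$ in a suitable graded module structure, and to observe that under the Hermite isomorphisms $\gamma$ and $\beta$ this multiplication becomes the identity on $\Sym^{n-m}(\D^m U)$. Both assertions of the theorem then follow at once.

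First I would extend the canonical pairings from Section~\ref{sec:hermite-alg} to pairings
\[
\D^m(\Sym^a U)\oo\bw^m(\Sym^b U)\lra\bw^m(\Sym^{a+b}U)\quad\text{and}\quad \D^m(\Sym^a U)\oo\D^m(\Sym^b U)\lra\D^m(\Sym^{a+b}U),
\]
by applying (\ref{eq:Dwegde-wedge}) with $A=\Sym^a U$, $B=\Sym^b U$ and then composing with $\bw^m$ (respectively $\D^m$) of the multiplication $\Sym^a U\oo\Sym^b U\lra\Sym^{a+b}U$. A tensor-level check shows these pairings are associative, so $N:=\bigoplus_{d\geq 0}\D^m(\Sym^d U)$ becomes a graded commutative ring and $M:=\bigoplus_{d\geq 0}\bw^m(\Sym^d U)$ becomes a graded $N$-module. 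Restricting the $N$-action on $M$ to $\D^m U=\D^m(\Sym^1 U)\subset N$ recovers the pairings (\ref{eqn:hermite-mult}), so the $\Sym(\D^m U)$-structures of Section~\ref{sec:hermite-alg} factor through a graded ring map $\Sym(\D^m U)\lra N$.

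Next I would invoke Proposition~\ref{prop:hermite-free}: since $N$ is a cyclic $\Sym(\D^m U)$-module generated in degree $0$, this ring map is surjective, and matching Hilbert series (both sides have dimension $\binom{m+d}{m}$ in degree $d$) shows it is an isomorphism, whose inverse in degree $n-m$ is precisely $\gamma$. Similarly $M$ is a rank-one free $N$-module with generator $e_0\in M_{m-1}$, so the map $N_{n-m}\lra M_{n-1}$, $f\mapsto f\cdot e_0$, is an isomorphism equal to $\beta^{-1}\circ\gamma$; equivalently, $\beta(f\cdot e_0)=\gamma(f)$ for every $f\in\D^m(\Sym^{n-m}U)$.

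Unwinding the definition, $\alpha$ is exactly the composition $f\mapsto f\cdot e_0$: the identification $\kk\cong\bw^m(\Sym^{m-1}U)$ sending $1$ to $e_0$, followed by the pairing $\D^m(\Sym^{n-m}U)\oo\bw^m(\Sym^{m-1}U)\lra\bw^m(\Sym^{n-1}U)$ above. Hence $\beta\circ\alpha=\gamma$, which establishes the triangle and simultaneously shows $\alpha$ is an isomorphism. The main point requiring care is the associativity assertion in the first step; while entirely formal, it is what ensures that the $\Sym(\D^m U)$-action on $M$ coincides with the restriction of the newly defined $N$-action, and hence allows Proposition~\ref{prop:hermite-free} to be applied to the latter.
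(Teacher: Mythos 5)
Your proof is correct and follows essentially the same strategy as the paper: both take the direct sum over all $n \geq m$, invoke Proposition~\ref{prop:hermite-free} to identify the three objects as free $\Sym(\D^m U)$-modules of rank one, and conclude by tracking the generators $1 \in \D^m(\Sym^0 U)$ and $x^{m-1}\wedge\cdots\wedge 1 \in \bw^m(\Sym^{m-1}U)$ through $\alpha$, $\beta$, $\gamma$. You add a useful amount of extra detail by packaging $\bigoplus_{d} \D^m(\Sym^d U)$ as a graded ring $N \cong \Sym(\D^m U)$ and checking associativity of the extended pairings, which makes explicit the $\Sym(\D^m U)$-linearity of $\alpha'$ that the paper asserts without comment.
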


\begin{proof}
  Fix $m$ and consider the direct sum of the terms in the triangle over all $n \ge m$ to get
  \[
 \xymatrix{
 & \bigoplus_{n \ge m} \D^m(\Sym^{n-m}U) \ar[dl]_{\alpha'} \ar[dr]^{\gamma'} & \\
 \bigoplus_{n \ge m} \bw^m(\Sym^{n-1}U) \ar[rr]^-{\beta'} & & \Sym(\D^m U) 
 }
\]
where all 3 terms are free $\Sym(\D^m U)$-modules of rank one, and the maps $\alpha', \beta', \gamma'$ are linear with respect to the $\Sym(\D^m U)$-action. Let $*$ denote the $\Sym(\D^m U)$-action in all cases. For $f \in \Sym(\D^m U)$, we have $\gamma'^{-1}(f) = f * 1$ where $1 \in \kk \cong \D^m(\Sym^0 U)$, and $\beta'^{-1}(f) = x^{m-1} \wedge \cdots \wedge x \wedge 1 \in \bw^m(\Sym^{m-1} U)$. By construction, $\alpha'(1) = x^{m-1} \wedge \cdots \wedge x \wedge 1$, so the triangle commutes.
\end{proof}

\section{Exterior powers of Schwarzenberger bundles}\label{sec:Sch}

The goal of this section is to prove that exterior powers of $\mc{E}^m_d$ arise as special cases of the construction of supernatural vector bundles from \cite[Section~6]{ES-JAMS}.
%The vector bundle $\mc{M}^b$ in (\ref{eq:Mb-as-coker}) is the same as $\mc{E}^m_d$ where $m=a$ and $d=b-a$.

\begin{theorem}\label{thm:coh-wedge-Sch}
 For $0\leq i\leq m$ consider the multiplication map
 \[
   \mu \colon \PP^i \times \PP^{m-i} \lra \PP^m.
 \]
 We have an isomorphism
 \[ \bw^i\mc{E}_d^m \simeq \mu_*\mc{O}(d+m-i+1,0),\]
 and in particular the bundle $\bw^i\mc{E}_d^m$ has supernatural cohomology, with root sequence
 \[
   -1,-2,\dots,-(m-i),-(m-i+d+2),\dots,-(m+d+1).
 \]
\end{theorem}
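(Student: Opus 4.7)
The plan is to construct the claimed isomorphism $\bw^i\mc{E}^m_d \simeq \mu_*\mc{O}(d+m-i+1,0)$ by extending the incidence correspondence of Section~\ref{sec:Hermite-Hilbert} to a product of two projective spaces, and then to read off the supernatural cohomology from a direct Künneth computation on $\PP^i \times \PP^{m-i}$.

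First I would work on $\PP^i \times \PP^{m-i} \times \PP^1$, which carries two pulled-back universal incidence divisors $\Xi_1^{\tn{univ}}$ and $\Xi_2^{\tn{univ}}$ of tridegrees $(1,0,i)$ and $(0,1,m-i)$. Their sum is the pullback $Z' = (\mu\times\op{id})^{*}Z$ of the incidence divisor $Z \subset \PP^m \times \PP^1$ from Section~\ref{sec:Hermite-Hilbert}. The short exact sequence $0 \to \mc{O}(-1,0,-i)|_{\Xi_2^{\tn{univ}}} \to \mc{O}_{Z'} \to \mc{O}_{\Xi_1^{\tn{univ}}} \to 0$ (expressing that the kernel of restriction to $\Xi_1^{\tn{univ}}$ is supported on $\Xi_2^{\tn{univ}}$), twisted by $\mc{O}(0,0,d+m)$ and pushed forward to $\PP^i\times\PP^{m-i}$ (with the required $R^1$-vanishing immediate from $\rH^1(\PP^1,\mc{O}(\geq -1))=0$), produces the fundamental short exact sequence
\begin{equation}\label{planSES}
0 \to p_2^* \mc{E}^{m-i}_d \oo \mc{O}(-1,0) \to \mu^* \mc{E}^m_d \to p_1^* \mc{E}^i_{d+m-i} \to 0,
\end{equation}
where the middle term is identified by flat base change (noting $\mu$ is finite between smooth varieties of the same dimension, hence flat by miracle flatness). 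Taking $\bw^i$ and projecting onto the top quotient of the induced filtration of $\bw^i(\mu^*\mc{E}^m_d) = \mu^*(\bw^i\mc{E}^m_d)$ yields a canonical surjection onto $\bw^i(p_1^*\mc{E}^i_{d+m-i}) = \mc{O}(d+m-i+1,0)$, which by adjunction becomes a map $\phi:\bw^i\mc{E}^m_d \to \mu_*\mc{O}(d+m-i+1,0)$ on $\PP^m$.

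Both source and target of $\phi$ are vector bundles of rank $\binom{m}{i}$: the target because a line bundle pushed forward along a finite flat map of degree $\binom{m}{i}$ is locally free of that rank. Over the dense open locus of reduced length-$m$ subschemes the fibers of both sheaves have natural bases indexed by the size-$i$ subsets of $\Xi$, and a direct check shows $\phi$ sends the basis vector $e_T$ of $\bw^i\mc{E}^m_d|_\Xi$ to the preferred fiber basis vector at the preimage $(\Xi_1,\Xi_2) = (T,\Xi\setminus T)$ and to zero at all other preimages, so $\phi$ is a generic isomorphism. To globalize I would match Hilbert polynomials: projection formula and Künneth give $\chi(\mu_*\mc{O}(d+m-i+1,0)(t)) = \binom{d+m+1+t}{i}\binom{t+m-i}{m-i}$, while $\chi(\bw^i\mc{E}^m_d(t))$ can be computed from the Eagon--Northcott-type resolution of $\bw^i\mc{E}^m_d$ obtained by iterating~\eqref{eq:ses-Edm}, and a direct combinatorial check shows the two polynomials agree. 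This forces $\det(\mu_*\mc{O}(d+m-i+1,0))\oo(\det\bw^i\mc{E}^m_d)^\vee$ to be a degree-zero, hence trivial, line bundle on $\PP^m$, so $\det\phi$ is a nonzero global section of $\mc{O}_{\PP^m}$, i.e.\ a nonzero constant, and $\phi$ is an isomorphism everywhere.

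Finally, supernaturality and the root sequence drop out from the projection formula and Künneth:
\[
\rH^j(\PP^m,\mu_*\mc{O}(d+m-i+1,0)(t)) = \bigoplus_{j_1+j_2=j}\rH^{j_1}(\PP^i,\mc{O}(d+m-i+1+t))\oo\rH^{j_2}(\PP^{m-i},\mc{O}(t)).
\]
Each factor is concentrated in a single cohomological degree depending on the sign of its twist, so for each fixed $t$ at most one pair $(j_1,j_2)$ contributes (the combination $(i,0)$ is excluded since $d\geq 0$). Reading off the integer intervals on which the contributions from $(0,0)$, $(0,m-i)$, and $(i,m-i)$ are nonzero recovers exactly the claimed root sequence $-1,-2,\ldots,-(m-i),-(m-i+d+2),\ldots,-(m+d+1)$. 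The main obstacle is the upgrade of $\phi$ from a generic to a global isomorphism; the Hilbert polynomial comparison above is the cleanest route, since the alternative of verifying surjectivity of $\phi$ fibrewise at non-reduced length-$m$ subschemes requires a more delicate analysis of the scheme-theoretic fibers of $\mu$ over the discriminant locus.
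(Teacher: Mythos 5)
Your argument is correct and takes a genuinely different route from the paper. The paper works on $\PP^i\times\PP^m$, identifies the divisibility locus $Z$ as the zero scheme of a section of $\alpha^*(\mc{E}^i_{m-i})(0,1)$ via a Chern-class/degree comparison (Lemma~\ref{lem:zero-Z}), resolves $\mc{O}_Z$ by the resulting Koszul complex, pushes forward along $\beta$, and then invokes induction on $m$ together with a delicate double-complex check to identify the resulting complex with the $i$-th exterior power of the $2$-term presentation of $\mc{E}^m_d$. You instead work one projective space higher, pulling back the universal incidence divisor to $\PP^i\times\PP^{m-i}\times\PP^1$ and factoring it into $\Xi_1+\Xi_2$; pushing forward produces the short exact sequence
\[
0 \lra p_2^*\mc{E}^{m-i}_d(-1,0) \lra \mu^*\mc{E}^m_d \lra p_1^*\mc{E}^i_{d+m-i} \lra 0,
\]
a clean structural fact about Schwarzenberger bundles that does not appear in the paper and is worth recording in its own right. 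Taking the top filtration quotient of $\bw^i$, adjointing, checking a generic isomorphism over the reduced locus, and globalizing by comparing $c_1$ (via Hilbert polynomials) bypasses both the induction and the commutative-square verification. The trade-off is that the paper's proof simultaneously produces the minimal free resolution of $\bw^i\mc{E}^m_d$ (used elsewhere, e.g., in Sections~\ref{sec:div-group}--\ref{sec:self-duality}), whereas your argument identifies the bundles more abstractly. Two items you should flesh out: (i) the asserted equality of the polynomials $\sum_{j=0}^i(-1)^j\binom{d+j}{j}\binom{d+m+1}{i-j}\binom{t+m-j}{m}=\binom{d+m+1+t}{i}\binom{t+m-i}{m-i}$ is a Vandermonde-type identity that needs an actual proof (the leading coefficients match by Vandermonde, and only the $c_1$ term is needed, but it should be checked); and (ii) the fiber calculation over reduced $\Xi$ should be written out, since the unit $\bw^i\mc{E}^m_d\to\mu_*\mu^*\bw^i\mc{E}^m_d$ is a diagonal, not an isomorphism, and one must track what the top-quotient projection does to each summand. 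Both are routine, so with those included the proof is complete.
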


If $i=0$ or $i=m$ then there is nothing to prove, so we may assume that $m\geq 2$ and $0<i<m$. Inside the product $\PP=\PP^i \times \PP^m$, consider the locus (see also \cite[Section~2.1.8]{3264})
\[
  Z = \{ (f,g) \mid f \mbox{ divides }g\}.
\]
with the reduced scheme structure. We have an isomorphism
\[
  \phi\colon \PP^i \times \PP^{m-i} \simeq Z \subset \PP,\quad (f,h) \mapsto (f,fh),
\]
and under this isomorphism we have
\[
  \phi^*(\mc{O}_Z(a,b)) = \mc{O}_{\PP^i \times \PP^{m-i}}(a+b,b).
\]

 The divisibility $f|g$ is equivalent to the existence of a form $h\in\Sym^{m-i}U$ and a scalar $c\in\kk$, not both $0$, such that $hf + cg = 0$. It follows that $Z$ can (set-theoretically) be realized as the degeneracy locus (i.e., where the map fails to have full rank) of a map of vector bundles
 \begin{align*}
   \Sym^{m-i}U(-1,0) \oplus \mc{O}_{\PP}(0,-1) &\lra \Sym^m U \oo \mc{O}_{\PP},\\
   (h\oo f, c\oo g) &\mapsto hf+cg.
 \end{align*}
 We write $\alpha \colon \PP\lra\PP^i$ for the first projection, and observe that if we restrict the domain of the morphism above to the first summand then we get from (\ref{eq:ses-Edm}) an injective map
 \[ \Sym^{m-i}U(-1,0) \hookrightarrow \Sym^m U \oo \mc{O}_{\PP},\]
 with cokernel given by $\alpha^*(\mc{E}^i_{m-i})$.

 \begin{lemma} \label{lem:zero-Z}
   $Z$ is the zero scheme of the induced map
 \[
   \mc{O}_{\PP}(0,-1) \lra \alpha^*(\mc{E}^i_{m-i}).
 \]
\end{lemma}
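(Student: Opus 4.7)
The plan is to establish the scheme-theoretic equality $V(s)=Z$, where $V(s)$ denotes the zero scheme of the section $s\colon\mc{O}_{\PP}(0,-1)\to\alpha^*\mc{E}^i_{m-i}$, in three steps: first show $Z\subseteq V(s)$ scheme-theoretically, then show $V(s)$ is Cohen--Macaulay of codimension exactly $i$, and finally conclude via a generic multiplicity calculation along $Z$.

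For the inclusion $Z\subseteq V(s)$, I would pull back $s$ along $\phi\colon\PP^i\times\PP^{m-i}\xrightarrow{\sim}Z$. Since $\alpha\circ\phi=\pi_1$, one has $\phi^*\alpha^*\mc{E}^i_{m-i}=\pi_1^*\mc{E}^i_{m-i}$. By construction $s$ factors as
\[
\mc{O}_{\PP}(0,-1)\xrightarrow{\,\cdot g\,}\Sym^m U\oo\mc{O}_{\PP}\lra\alpha^*\mc{E}^i_{m-i},
\]
where the second arrow has kernel $\Sym^{m-i}U(-1,0)$ via ``multiplication by the universal $f$''. On $\PP^i\times\PP^{m-i}$ the pullback of $g$ is $fh$, which manifestly lies in this kernel (it is $f$ times the universal $h\in\Sym^{m-i}U\oo H^0(\mc{O}(0,1))$). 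Hence $\phi^*s=0$ and $Z\subseteq V(s)$ as subschemes.

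Next, since $\alpha^*\mc{E}^i_{m-i}$ has rank $i$, $V(s)$ is locally cut out by $i$ functions, so $\codim V(s)\le i$; combined with $\codim Z=(m+i)-m=i$ and $Z\subseteq V(s)$, this forces $\codim V(s)=i$. The $i$ local equations therefore form a regular sequence on the smooth ambient $\PP$, so $V(s)$ is a local complete intersection, in particular Cohen--Macaulay with no embedded components. Its scheme structure is thus determined by its length at the generic point of each irreducible component; since $V(s)_{\mathrm{red}}=Z$ is irreducible, the equality $V(s)=Z$ reduces to verifying that $V(s)$ has multiplicity one along $Z$.

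This last condition is equivalent to the derivative $ds\colon N_{Z/\PP}\to\alpha^*\mc{E}^i_{m-i}(0,1)|_Z$ being an isomorphism at some point of $Z$ (both are rank-$i$ bundles on $Z$). I would verify this by an explicit local calculation at a generic point $(f_0,f_0 h_0)\in Z$ in the affine chart $z_0=w_0=1$: trivialize $\mc{E}^i_{m-i}$ locally using the quotient $\Sym^m U\onto\Sym^m U/(f_0\cdot\Sym^{m-i}U)$, under which the components of $s$ become the remainder of $g$ modulo $f$, and differentiating in the normal direction $g\mapsto g+\delta$ yields the projection $\delta\mapsto\delta\bmod f_0$, which is surjective onto the $i$-dimensional target. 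The main obstacle lies in this last computation; steps (i) and (ii) are essentially formal once the three-step framework is in place.
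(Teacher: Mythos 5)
Your three-step outline is a sound strategy, and the final step (showing the Jacobian $ds \colon N_{Z/\PP} \to \alpha^*\mc{E}^i_{m-i}(0,1)|_Z$ is generically an isomorphism) is a genuinely different route from the paper, which instead computes the degree of $Z$ and matches it against the degree of the top Chern class $c_i(\alpha^*\mc{E}^i_{m-i}(0,1))$ read off from the Chow ring of $\PP^i \times \PP^m$. Both are legitimate ways to pin down multiplicity one along the generic point of $Z$; the paper's is a global intersection-theoretic count, yours a local tangent-space computation.

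However, step (ii) as written contains a logical gap. From ``$V(s)$ is cut out locally by $i$ equations, hence $\codim V(s) \le i$'' together with ``$Z \subseteq V(s)$ and $\codim Z = i$'' you cannot conclude $\codim V(s) = i$: the inclusion $Z\subseteq V(s)$ gives a \emph{lower} bound on the dimension of some component, not an upper bound, and $V(s)$ could a priori contain additional components of codimension $< i$ (compare the section $(x,x)$ of $\mc{O}(1)^{\oplus 2}$ on $\PP^2$, whose zero scheme has codimension $1$ even though it contains the codimension-$2$ point $\{x=y=0\}$). Similarly, the assertion $V(s)_{\mathrm{red}} = Z$, on which your final reduction rests, is used but never derived from what you proved; you only established $Z\subseteq V(s)$. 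What is missing is the reverse set-theoretic inclusion $V(s) \subseteq Z$: at a point $([f],[g])$ the section $s$ is $g$ modulo $f\cdot \Sym^{m-i}U$, so $s=0$ forces $f\mid g$, i.e.\ $([f],[g])\in Z$. This is exactly the identification of the set-theoretic zero locus with the degeneracy locus that the paper records in the paragraph immediately preceding the lemma; once you add it, your steps (ii) and (iii) go through. Finally, you flag the local Jacobian computation as ``the main obstacle'' and do not carry it out — for a complete proof this needs to be done, though the criterion you state for it is correct.
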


\begin{proof}
Since $\phi$ is a closed immersion, and $\phi^*(\mc{O}_\PP(1,1)) = \mc{O}_{\PP^i \times \PP^{m-i}}(2,1)$, we can compute the degree of $Z$ with respect to $\mc{O}_\PP(1,1)$ as the $m$-fold self-intersection of $\mc{O}_{\PP^i \times \PP^{m-i}}(2,1)$. This is the coefficient of $s^it^{m-i}$ in $(2s+t)^m$, which is $2^i \binom{m}{i}$. 
  
  Since $\codim Z = i$, the cohomology class of the zero locus of this section is the top Chern class of $\alpha^*(\mc{E}^i_{m-i})(0,1)$. Writing the Chow ring of $\PP$ as $\mathbb{Z}[s,t]/(s^{i+1},t^{m+1})$ (using \cite[Theorem~2.10]{3264}), the top Chern class of $\alpha^*(\mc{E}^i_{m-i})(0,1)$ is by \cite[Proposition~5.17]{3264}
 \[\sum_{j=0}^i c_j(\mc{E}_{m-i}^i,s) t^{i-j}\]
where $c_j(\mc{E}_{m-i}^i,s)$ denotes the $j$th Chern class of $\alpha^*(\mc{E}_{m-i}^i)$. Using \eqref{eq:ses-Edm} and \cite[Theorem 5.3(c)]{3264}, the Chern polynomial of $\alpha^*(\mc{E}_{m-i}^i)$ is (see also \cite[Section~9.3.3]{3264})
\[(1-s)^{m-i+1} = \sum_{j=0}^i \binom{m-i+j}{j} s^j.\]
Since $Z$ has dimension $m$, the degree of the zero locus with respect to $\mc{O}_\PP(1,1)$ is the coefficient of $s^it^m$ in $(1-s)^{m-i+1} (s+t)^m$, which is
   \[
     \sum_{j=0}^i \binom{m-i+j}{j} \binom{m}{i-j} = \sum_{j=0}^i \binom{m}{i} \binom{i}{j} = 2^i \binom{m}{i}.
   \]
   This agrees with the degree of $Z$, and hence we conclude that $Z$ is scheme-theoretically the zero locus of the claimed map of vector bundles.
 \end{proof}

 \begin{proof}[Proof of Theorem~\ref{thm:coh-wedge-Sch}]
We will prove the result by induction on $m$. From Lemma~\ref{lem:zero-Z}, we obtain an exact Koszul resolution (using (\ref{eq:rank-det-Emd}))
\[
  0 \lra \mc{O}_{\PP}(0,-i) \lra \alpha^*(\mc{E}^i_{m-i})(0,-i+1) \lra \cdots \lra \alpha^*(\det(\mc{E}^i_{m-i})) \lra \mc{O}_Z(m-i+1,0)\lra 0.
\]

If we let $\beta\colon\PP\lra\PP^m$ denote the second projection, then $\mu = \beta\circ\phi$, and in particular
\[ \mc{F} := \mu_*(\mc{O}_{\PP^i \times \PP^{m-i}}(d+m-i+1,0)) = \beta_*(\mc{O}_Z(d+m-i+1,0))\]
is resolved by the push-forward along $\beta$ of the earlier Koszul complex twisted by $\mc{O}_{\PP}(d,0)$:
\begin{align} \label{eqn:koszulE}
  0 \lra \mc{O}_{\PP}(d,-i) \lra \alpha^*(\mc{E}^i_{m-i})(d,-i+1) \lra \cdots \lra \alpha^*(\det(\mc{E}^i_{m-i}))(d,0).
\end{align}
By induction, we know that $(\bw^j\mc{E}^i_{m-i})(d)$ has no higher cohomology, and
\[ 
\begin{aligned}
\rH^0(\PP^i,\bw^j(\mc{E}^i_{m-i})(d)) &= \Sym^{m-j+1+d}(\D^j U) \oo \Sym^d(\D^{i-j}U) \\
&\cong \bw^j(\Sym^{m+d}U) \oo \D^{i-j}(\Sym^d U),
\end{aligned}
\]
where the last isomorphism follows from Hermite reciprocity. Since $\beta$ is  a finite map, we get that $\beta_*$ is exact, and hence the sheaf $\mc{F}$ is resolved by a complex
\[
  % 0\lra \D^i(\Sym^d U) (-i) \lra
  \cdots \lra \bw^j(\Sym^{m+d}U) \oo \D^{i-j}(\Sym^d U) (-i+j) \lra
  \cdots \lra \bw^i(\Sym^{m+d}U) \oo \mc{O}_{\PP^m}.%\lra 0.
\]

%\begin{lemma}
We claim that the rightmost differential in the above complex can be identified with the rightmost differential in the $i$-th exterior power of the $2$-term resolution 
\[
  \Sym^d U(-1)\lra \Sym^{m+d}U\oo \mc{O}_{\PP^m}
\]
of $\mc{E}^m_d$. Once shown, this implies that $\mc{F} \simeq \bw^i \mc{E}^m_d$.
%\end{lemma}

%\begin{proof}
To prove the claim, in the exact sequence \eqref{eqn:koszulE}, replace each term $\alpha^*(\bw^j \mc{E}^i_{m-i})$ by its resolution $\alpha^*(\bw^j(\Sym^{m-i} U (-1) \to \Sym^m U))$. Then we get a double complex mapping to the complex in question, and we take sections of the rightmost two terms to get:
  \[
    \xymatrix{     0 & 0 \\
      % {\begin{array}{c} \bw^j(\Sym^{m+d}U) \oo\\ \D^{i-j}(\Sym^d U) (-i+j)\end{array}} \ar[r] & \cdots \ar[r] &
      \bw^{i-1}(\Sym^{m+d} U) \otimes \Sym^d U(-1) \ar[r] \ar[u] & \bw^i(\Sym^{m+d}U) \oo \mc{O}_{\PP^m}\ar[u] \\
      % {\begin{array}{c} \bw^j(\Sym^{m+d}U) \oo\\ \Sym^d (\D^{i} U) (-i+j)\end{array}} \ar[u] \ar[r] & \cdots \ar[r] &
      \bw^{i-1}(\Sym^{m} U) \otimes \Sym^d (\D^i U)(-1) \ar[r] \ar[u] & \bw^i(\Sym^m U) \otimes \Sym^d(\D^i U) \oo \mc{O}_{\PP^m} \ar[u] \\
      % \vdots \ar[u] & &
      \vdots \ar[u] & \vdots \ar[u]
    }
  \]
  The vertical maps from the second row to the top row are surjective and the second row comes from the $i$th exterior power of the 2-term complex $\mc{O}(-1) \to \Sym^m U$ tensored with $\Sym^d(\D^i U)$. This implies that the differentials in the first row are determined by the second row and the vertical maps, so it suffices to show that the claimed differential for the first row gives a commutative square.

  For $j=i,i-1$, the vertical map
  \[
    \bw^j(\Sym^{m+d} U) \otimes \Sym^d(\D^i U)(-i+j) \to \bw^j(\Sym^{m+d} U)\otimes \D^{i-j}(\Sym^d U)(-i+j)
  \]
  factors as
  \begin{align*}
    \bw^j(\Sym^{m+d} U) \otimes \Sym^d(\D^i U)(-i+j) &\to \bw^j(\Sym^{m+d} U) \otimes \Sym^d(\D^j U) \otimes \Sym^d(\D^{i-j} U) (-i+j)\\
                                                     &\to \bw^j(\Sym^{m+d} U)\otimes \D^{i-j}(\Sym^d U)(-i+j)
  \end{align*}
  where in the second map we use the action of $\Sym(\D^j U)$ on $\bigoplus_{n\geq 0} \bw^j (\Sym^n U)$ from the previous section on the first two factors (which we denote by $*$). The second factor is the identity map in both cases.

  It suffices to consider the case $d=1$ due to the associativity of $*$. The square becomes
  \[
        \xymatrix{  \bw^{i-1}(\Sym^{m+1} U) \otimes U(-1) \ar[r] & \bw^i(\Sym^{m+1}U) \oo \mc{O}_{\PP^m}\\
 \bw^{i-1}(\Sym^{m} U) \otimes \D^i U(-1) \ar[r] \ar[u] & \bw^i(\Sym^m U) \otimes \D^i U \oo \mc{O}_{\PP^m} \ar[u] 
    }.
  \]
  Pick $\omega \otimes x^{(j)} \otimes f \in \bw^{i-1}(\Sym^m U) \otimes \D^i U(-1)$. The bottom path is
  \begin{align*}
    \omega \otimes x^{(j)} \otimes f \mapsto \omega \wedge f \otimes x^{(j)} \mapsto x^{(j)} * (\omega \wedge f),
  \end{align*}
  while the top path is
  \begin{align*}
    \omega \otimes x^{(j)} \otimes f &\mapsto x^{(j-1)} * \omega \otimes x \otimes f + x^{(j)} * \omega \otimes 1\otimes f\\
    &\mapsto (x^{(j-1)} * \omega) \wedge x f + (x^{(j)} * \omega) \wedge  f
  \end{align*}
  where by convention, $x^{(-1)}=0$. The two final quantities agree, which proves the claim.
%\end{proof}
\end{proof}

\section{Secant varieties of rational normal curves}\label{sec:div-group}

In this section we give an $\SL$-equivariant construction of the rank one maximal Cohen--Macaulay modules over a Hankel determinantal ring $B$, and recover the description of the divisor class group of $B$ from \cite[Section~3]{CMSV}, as well as the property of $B$ having rational singularities. We formulate our results and arguments geometrically, using the usual identification of $\op{Spec}(B)$ with the affine cone $\widehat{\Sigma}$ over a secant variety of a rational normal curve. In the process we recover well-known properties of $\widehat{\Sigma}$, such as normality and the Cohen--Macaulay property, along with the explicit description of its equations and syzygy modules. The key ingredients that we employ are the desingularization of $\widehat{\Sigma}$ via Schwarzenberger bundles, as explained in \cite[Section~6]{ott-val}, and the Kempf--Weyman technique for constructing syzygies, as explained in \cite[Chapter~5]{weyman}. We will assume that $\kk$ is algebraically closed in order to make valid set-theoretic arguments involving the $\kk$-points of our varieties, but the careful reader may wish to rephrase the justifications in order to remove this hypothesis.

Before going into more details, we establish some notation used throughout the section. If $\mc{F}$ is a coherent locally free sheaf on a variety $X$, we consider the sheaf of (graded) algebras
\[ \Sym_{\mc{O}_X}(\mc{F}) = \mc{O}_X \oplus \mc{F} \oplus \Sym^2(\mc{F}) \oplus \cdots\]
and write $\bb{P}_X(\mc{F})$ for $\ul{\op{Proj}}_X(\Sym_{\mc{O}_X}(\mc{F}))$. Similarly, we write $\bb{A}_X(\mc{F})$ for $\ul{\op{Spec}}_X(\Sym_{\mc{O}_X}(\mc{F}))$. When $X$ is understood from the context, we simply write $\Sym(\mc{F})$, $\bb{P}(\mc{F})$, and $\bb{A}(\mc{F})$.

We let $\PP^n=\bb{P}(\Sym^n U)$, and note that its $\kk$-points $[f]\in \PP^n$ are represented by non-zero elements $f\in \D^n U$ up to scaling. Every $u\in U$ gives rise to a symmetric tensor
\[ u^{(n)} = u \oo u \oo \cdots \oo u \in \D^n U,\]
the \defi{$n$-th divided power} of $u$. We get an $\SL$-equivariant map
\[
  \PP^1 \lra \PP^n,\quad [u] \lra [u^{(n)}],
\]
called the degree $n$ \defi{Veronese embedding} of $\PP^1$. We denote its image by $\Gamma$, which is a \defi{rational normal curve} of degree $n$, and write $\Sigma_k$ for the $k$-secant variety of $\Gamma$. Recall that this is the Zariski closure of the union of all linear spaces $\rm{Span}(x_1,\dots,x_k)$, ranging over all choices of points $x_1,\dots,x_k \in \Gamma$. In particular, we have $\Sigma_1 = \Gamma$. We consider the affine space $\AA^{n+1}= \bb{A}(\Sym^n U)$, and write $\widehat{\Sigma}_k\subset\AA^{n+1}$ for the affine cone over~$\Sigma_k$. We let $B$ denote the coordinate ring of $\widehat{\Sigma}_k$ (or the homogeneous coordinate ring of $\Sigma_k$), which is called a Hankel determinantal ring in \cite{CMSV}. The following theorem summarizes some of the basic properties of~$\widehat{\Sigma}_k$:

\begin{theorem}\label{thm:Hankel}
 The variety $\widehat{\Sigma}_k$ is normal, Cohen--Macaulay, and has rational singularities. Its divisor class group is isomorphic to $\bb{Z}/(n-2k+2)\bb{Z}$.
\end{theorem}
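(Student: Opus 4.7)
The plan is to derive all four assertions from an analysis of the desingularization $\widehat{q}\colon\bb{A}(\mc{E}^k_{n-k})\to\widehat{\Sigma}_k$ via the Kempf--Weyman geometric technique. First I would realize $\bb{A}(\mc{E}^k_{n-k})$ as a closed subvariety of $\AA^{n+1}\times\PP^k$ resolved by the Koszul complex for the regular embedding $\Sym^{n-k}U(-1)\hookrightarrow\Sym^n U\oo\mc{O}_{\PP^k}$ from \eqref{eq:ses-Edm}, and push this Koszul resolution forward along the second projection $\pi_2\colon\AA^{n+1}\times\PP^k\to\AA^{n+1}$. By Bott's theorem, only the extreme halves of the Koszul terms contribute to the derived pushforward: the small exterior powers $\bw^i(\Sym^{n-k}U)\oo\mc{O}_{\PP^k}(-i)$ with $i\leq k$ contribute in cohomological degree zero, while the range $i\geq k+1$ contributes in cohomological degree $k$ via Serre duality. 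Identifying the resulting terms with the help of Hermite reciprocity (Theorem~\ref{thm:Hermite}) produces the Eagon--Northcott complex of a Hankel matrix as a minimal free resolution of $B$ over $\Sym(\Sym^n U)$. Its length equals the codimension $n-2k+1$ of $\widehat{\Sigma}_k$ in $\AA^{n+1}$, so Auslander--Buchsbaum yields Cohen--Macaulayness, and the vanishings $R^i\widehat{q}_*\mc{O}_{\bb{A}(\mc{E}^k_{n-k})}=0$ for $i>0$ that drop out of the same computation give rational singularities. Normality then follows from Serre's $R_1+S_2$ criterion, combining Cohen--Macaulayness with the observation that the singular locus of $\widehat{\Sigma}_k$ is exactly $\widehat{\Sigma}_{k-1}$, a codimension-two subvariety.

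For the class group, since $\widehat{\Sigma}_{k-1}$ is the codimension-two singular locus, $\op{Cl}(\widehat{\Sigma}_k)=\op{Pic}(\widehat{\Sigma}_k\setminus\widehat{\Sigma}_{k-1})$, and $\widehat{q}$ restricts to an isomorphism between this open and $\bb{A}(\mc{E}^k_{n-k})\setminus E$, where $E$ is the unique divisorial component of the exceptional locus $\widehat{q}^{-1}(\widehat{\Sigma}_{k-1})$. Irreducibility of $E$ follows by a fiber-dimension count: $\widehat{q}^{-1}(\widehat{\Sigma}_{k-1}\setminus\widehat{\Sigma}_{k-2})$ (with the convention $\widehat{\Sigma}_{-1}=\emptyset$) is a $\PP^1$-bundle over the open stratum, contributing a single divisor, while preimages of the lower strata $\widehat{\Sigma}_j$ with $j<k-1$ have strictly larger codimension. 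Since $\bb{A}(\mc{E}^k_{n-k})$ is a vector bundle over $\PP^k$, we have $\op{Cl}(\bb{A}(\mc{E}^k_{n-k}))=\bb{Z}\cdot H$ with $H=\pi^*\mc{O}_{\PP^k}(1)$, so $\op{Cl}(\widehat{\Sigma}_k)=\bb{Z}/[E]\bb{Z}$, and it remains to compute the class $[E]$.

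I expect this last identification to be the main obstacle. Since $E$ is $\SL$-invariant, $[E]$ is a determined integer multiple of $H$; the cleanest route is to exhibit $E$ as the vanishing locus of an explicit $\SL$-equivariant section of $\pi^*\mc{O}_{\PP^k}(n-2k+2)$, which is unique up to scalar by an equivariance argument. As a consistency check, for $k=1$ the bundle $\mc{E}^1_{n-1}=\mc{O}_{\PP^1}(n)$ has total space $\op{Tot}(\mc{O}_{\PP^1}(-n))$ whose zero section is cut out by the tautological section of $\pi^*\mc{O}_{\PP^1}(-n)$, recovering $[E]=-nH$ and $\op{Cl}(\widehat{\Gamma})=\bb{Z}/n\bb{Z}$, in agreement with the formula $n-2(1)+2=n$. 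Once $[E]=\pm(n-2k+2)H$ is established, $\op{Cl}(\widehat{\Sigma}_k)=\bb{Z}/(n-2k+2)\bb{Z}$ is immediate.
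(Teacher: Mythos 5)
Your proposal follows the same overall architecture as the paper: resolve $\widehat{\Sigma}_k$ by the total space of the Schwarzenberger bundle, push forward the Koszul complex via the Kempf--Weyman method to obtain the Eagon--Northcott resolution, read off Cohen--Macaulayness from the resolution length and rational singularities from the vanishing of $R^i\pi_*\mc{O}_Y$, and then compute $\op{Cl}(\widehat{\Sigma}_k)$ from the localization sequence $\bb{Z}\to\op{Cl}(Y)\to\op{Cl}(Y\setminus Z)\to 0$ where $Z=\pi^{-1}(\widehat{\Sigma}_{k-1})$. Your route to normality (Serre's $R_1+S_2$, with $R_1$ from $\pi$ restricting to an isomorphism over $\widehat{\Sigma}_k\setminus\widehat{\Sigma}_{k-1}$) is a harmless variant of the paper's direct citation of Weyman's theorem, and is sound once you observe that over that open set $\pi$ is proper with finite fibers and $\pi_*\mc{O}_Y=\mc{O}$, hence an isomorphism. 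You only need the singular locus to be \emph{contained in} $\widehat{\Sigma}_{k-1}$, not to equal it.

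There is, however, a genuine gap in the class-group computation, and it is exactly the point you flag as ``the main obstacle.'' Exhibiting an $\SL$-equivariant section $s$ of $\psi^*\mc{O}_{\PP^k}(\pm(n-2k+2))$ whose \emph{set-theoretic} zero locus is $Z$ only shows that $\op{div}(s)=mZ$ for some integer $m\geq 1$, hence $[Z]$ is a divisor of $n-2k+2$, giving the \emph{upper} bound $|\op{Cl}(\widehat{\Sigma}_k)|\leq n-2k+2$. The paper constructs such a section as $\det(\Delta)$ for an explicit rank-$k$ map $\Delta$ of bundles on $Y$, verifies its set-theoretic zero locus is $Z$, and then closes the multiplicity gap not by a local computation but by independently producing $n-2k+2$ pairwise non-isomorphic rank-one MCM modules $M_0,\dots,M_{n-2k+1}$ (each $M_r=\pi_*\psi^*\mc{O}_{\PP^k}(r)$, distinguished by their minimal numbers of generators $\binom{r+k}{k}$), which forces $|\op{Cl}(\widehat{\Sigma}_k)|\geq n-2k+2$ and hence equality, retroactively showing $m=1$. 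Your proposal supplies neither this lower bound nor any alternative argument that the section vanishes to order one along $Z$, so the step ``Once $[E]=\pm(n-2k+2)H$ is established'' is not yet justified. Your equivariance heuristic for the uniqueness of the section, while plausible, does not by itself control the multiplicity; and in fact the paper proves this uniqueness only later (Proposition~\ref{prop:h0-SkE}), using the normality of $B$ and a Cayley--Hamilton argument rather than representation theory.
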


The conclusions of Theorem~\ref{thm:Hankel} are not new. Our goal to provide a unified proof of this theorem based on properties of Schwarzenberger bundles and the Kempf--Weyman geometric technique. For a more in-depth study of other aspects of the theory, the reader can consult \cites{GP,eis-lin-sec,watanabe,conca,CMSV} or \cite[Section~10.4]{3264}.

We consider the Schwarzenberger bundle $\mc{E} = \mc{E}^k_{n-k}$ on $\PP^k = \bb{P}(\D^k U)$, along with the diagram of spaces and maps
% \[
% \xymatrix{
% Y = \bb{A}_{\PP^k}(\mc{E}) \ar[r]^-{\iota} \ar[dr]_-{\pi} \ar@/^2pc/[rr]^-\psi & \AA^{n+1} \times \PP^k \ar[r]_-p \ar[d]^-q & \PP^k \\
% & \AA^{n+1} &
% }
% \]
\[
\xymatrix{
& Y = \bb{A}_{\PP^k}(\mc{E}) \ar[d]^-{\iota} \ar[dl]_-{\pi} \ar[dr]^-\psi \\
\AA^{n+1} &  \AA^{n+1} \times \PP^k \ar[l]^-q \ar[r]_-p & \PP^k
}
\] 
where $\iota$ is the closed immersion induced by the surjection $\Sym^n U \oo \mc{O}_{\PP^k} \onto \mc{E}$ in (\ref{eq:ses-Edm}), the maps $p,q$ are the projections to the two factors, and $\pi = q\circ\iota$. The map $\psi=p\circ\iota$ is the structure map of the geometric vector bundle $Y$ over $\PP^k$, whose $\kk$-points correspond to pairs $(f,[g])$, where $f$ belongs to the dual of the fiber of $\mc{E}$ at $[g]\in\PP^k$. To make this more explicit, we use the perfect pairing
\[
  \langle-,-\rangle\colon\D^d U \times \Sym^d U \lra \kk,
\]
which exists for each $d\geq 0$ and is $\SL$-equivariant. It will be important to note that if we think of $P \in \Sym^d U$ as a homogeneous polynomial of degree $d$ on $U^{\vee}=U$, then for each $u\in U$, the evaluation of $P$ at $u$ is computed by
\begin{subequations}
\begin{equation}\label{eq:Pu}
 P(u) = \langle u^{(d)},P\rangle.
\end{equation}
We can construct more generally a \defi{contraction map}
\[
  \langle-,-\rangle\colon\D^d U \times \Sym^r U \lra \D^{d-r}U\text{ for }d\geq r\geq 0,
\]
induced by the comultiplication $\D^d U \to \D^{d-r}U \oo \D^r U$ and the pairing $\D^r U \times \Sym^r U \lra \kk$. Suppose now that $[g]\in\PP^k$, where $0\neq g\in\Sym^k U$. If we restrict (\ref{eq:ses-Edm}) to the fiber at $[g]$ and dualize, we can identify the fiber of $\mc{E}^{\vee}$ at $[g]$ with the kernel of the contraction
\begin{equation}\label{eq:contr-g}
 \langle -,g \rangle\colon \D^n U \lra \D^{n-k}U.
\end{equation}
\end{subequations}
This yields the explicit description of the $\kk$-points in $Y$ as
\begin{equation}\label{eq:explicit-Y}
 Y = \{(f,[g]): 0\neq g\in\Sym^k U,\ f\in\ker\langle -,g \rangle\colon \D^n U \lra \D^{n-k}U\}.
\end{equation}
The connection between Schwarzenberger bundles and secant varieties is given as follows.

\begin{lemma}\label{lem:desingularization}
 The image of $\pi$ is $\widehat{\Sigma}_k\subseteq\AA^{n+1}$.
\end{lemma}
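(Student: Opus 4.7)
The plan is to identify the image of $\pi$ set-theoretically using the explicit description \eqref{eq:explicit-Y}, and to compare it with $\widehat{\Sigma}_k$ via the standard apolarity characterization of secants of the rational normal curve. First, observe that $\pi$ is proper, being the composition of the closed immersion $\iota$ with the projection $q\colon\AA^{n+1}\times\PP^k\to\AA^{n+1}$ (which is proper because $\PP^k$ is). Hence $\pi(Y)$ is a closed irreducible subset of $\AA^{n+1}$, and it suffices to verify each inclusion on a dense subset and take closures.

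The main computational input is the identity
\[
\langle u^{(n)},g\rangle \;=\; \langle u^{(k)},g\rangle\cdot u^{(n-k)}, \qquad u\in U,\ g\in\Sym^k U,
\]
which follows from the comultiplication rule $u^{(n)}\mapsto u^{(n-k)}\otimes u^{(k)}$ in $\D^{n-k}U\otimes\D^k U$ combined with \eqref{eq:Pu}. For the inclusion $\widehat{\Sigma}_k\subseteq\pi(Y)$, I take an honest $k$-secant point $f=\sum_{i=1}^k c_i\, u_i^{(n)}$. Imposing the $k$ linear conditions $\langle u_i^{(k)},g\rangle=0$ on the $(k+1)$-dimensional space $\Sym^k U$ produces a non-zero $g$ with $\langle u_i^{(k)},g\rangle=0$ for all $i$; the identity above then forces $\langle f,g\rangle=0$, so $f\in\pi(Y)$. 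Since the honest $k$-secants are dense in $\widehat{\Sigma}_k$ and $\pi(Y)$ is closed, we conclude $\widehat{\Sigma}_k\subseteq\pi(Y)$.

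For the reverse inclusion, I would restrict attention to the dense open subset $V\subseteq\PP^k$ parametrizing those $g\in\Sym^k U$ with $k$ distinct roots $u_1,\dots,u_k\in\PP(U)$ (the complement of the discriminant). For $[g]\in V$, the identity shows $u_i^{(n)}\in\ker\langle -,g\rangle$ for each $i$; moreover, multiplication by $g$ defines an injection $\Sym^{n-k}U\hookrightarrow\Sym^n U$, so the dual contraction $\langle -,g\rangle\colon\D^n U\to\D^{n-k}U$ is surjective and its kernel has dimension exactly $(n+1)-(n-k+1)=k$. Since the $u_i^{(n)}$ corresponding to distinct points of the rational normal curve $\Gamma$ are linearly independent (classical, via a Vandermonde computation), the fiber of $\mc{E}^\vee$ over $[g]$ is precisely $\mathrm{span}(u_1^{(n)},\dots,u_k^{(n)})\subseteq\widehat{\Sigma}_k$. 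Hence $\psi^{-1}(V)\subseteq Y$ is mapped by $\pi$ into $\widehat{\Sigma}_k$; since $\psi^{-1}(V)$ is dense in $Y$ and $\widehat{\Sigma}_k$ is closed, this gives $\pi(Y)\subseteq\widehat{\Sigma}_k$.

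The main obstacle I anticipate is handling the $g$ with repeated roots, for which the naive ``the kernel is the span of $u_i^{(n)}$'' breaks down. The density argument above sidesteps this issue cleanly, but a more intrinsic description would require interpreting $\ker\langle -,g\rangle$ in terms of the length-$k$ subscheme of $\PP^1$ cut out by $g$ (essentially, the locus where the $k$-secant plane degenerates to an osculating or mixed osculating plane); this would be needed for a scheme-theoretic refinement but is unnecessary at the set-theoretic level claimed in the lemma.
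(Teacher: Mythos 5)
Your proof is correct and takes essentially the same approach as the paper: in both cases one restricts to the dense locus where $g$ has distinct roots, uses \eqref{eq:Pu} to see that the fiber of $\mc{E}^\vee$ at $[g]$ is $\operatorname{Span}\{u_1^{(n)},\dots,u_k^{(n)}\}$, identifies this with a $k$-secant plane, and concludes by density plus the properness of $\pi$. Your writeup is more explicit (spelling out the comultiplication identity, the dimension count for $\ker\langle -,g\rangle$, and the linear-algebra construction of $g$ where the paper simply takes the polynomial with the prescribed roots), but the underlying argument is identical.
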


\begin{proof} Consider a general point $(f,[g])\in Y$, where $g\in\Sym^k U$ is a homogeneous polynomial with distinct roots $u_1,\dots,u_k\in U$. It follows from (\ref{eq:Pu}) that $u_1^{(n)},\dots,u_k^{(n)}$ belong to the kernel of the map (\ref{eq:contr-g}), and since they are linearly independent, they must generate the fiber of $\mc{E}^{\vee}$ at $[g]$. We conclude that $f\in\op{Span}\{u_1^{(n)},\dots,u_k^{(n)}\}$, hence $f\in\widehat{\Sigma}_k$. Conversely, since every general point $f\in\widehat{\Sigma}_k$ belongs to $\op{Span}\{u_1^{(n)},\dots,u_k^{(n)}\}$ for some distinct $u_1,\dots,u_k\in U$, it follows by considering $g\in\Sym^k U$ with roots $u_1,\dots,u_k$, that $f\in\op{Im}(\pi)$. This shows that $\op{Im}(\pi)$ is dense in $\widehat{\Sigma}_k$, but since $\pi$ is a projective morphism, it follows that $\op{Im}(\pi)=\widehat{\Sigma}_k$.
\end{proof}

We will see shortly that $\pi$ is in fact birational, and therefore it provides a resolution of singularities of $\widehat{\Sigma}_k$. We make the usual identification between quasi-coherent sheaves on affine space and their global sections, and let
\[ S = \mc{O}_{\AA^{n+1}},\quad B = \mc{O}_{\widehat{\Sigma}_k},\quad \tilde{B} = \pi_*\mc{O}_Y.\]

\begin{proposition}\label{prop:syzygies-B}
 We have that $\tilde{B}=B$ has an $\SL$-equivariant minimal graded free resolution $F_{\bullet}$ over $S$, whose terms are given by
 \[ F_0 = S,\quad F_i =  \D^{i-1}(\Sym^k U) \oo \bw^{i+k}(\Sym^{n-k}U) \oo S(-i-k)\text{ for }i=1,\dots,n-2k+1.\]
\end{proposition}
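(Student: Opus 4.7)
The plan is to apply the Kempf--Weyman geometric technique \cite[Chapter~5]{weyman} to the desingularization $\pi\colon Y\to\widehat{\Sigma}_k$. Dualizing \eqref{eq:ses-Edm} with $d=n-k$ realizes $Y=\bb{A}_{\PP^k}(\mc{E})$ as the total space of the subbundle $\mc{E}^\vee\hookrightarrow\D^n U\oo\mc{O}_{\PP^k}$, so $Y\hookrightarrow\AA^{n+1}\times\PP^k$ is cut out by a $\Sym^{n-k}U$-worth of bilinear relations coming from the map $\Sym^{n-k}U\oo\mc{O}_{\PP^k}(-1)\to\Sym^nU\oo\mc{O}_{\PP^k}$, and $\iota_*\mc{O}_Y$ admits the Koszul resolution
\[
K_i=\bw^i(\Sym^{n-k}U)\oo\mc{O}_{\AA^{n+1}\times\PP^k}(-i,-i),\qquad 0\le i\le n-k+1.
\]

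I would then apply $Rq_*$ term-wise and form the total complex $F^\bullet$ on $\AA^{n+1}$, which is quasi-isomorphic to $R\pi_*\mc{O}_Y$ since $R\pi_*=Rq_*\iota_*$ and $\iota_*$ is exact. The cohomology $\rH^\bullet(\PP^k,\mc{O}(-i))$ vanishes unless $i=0$ (giving $\kk$ in degree $0$) or $i\ge k+1$ (giving $\D^{i-k-1}(\Sym^kU)$ in degree $k$ by Serre duality on $\PP^k=\bb{P}(\D^kU)$). Re-indexing via $j=i-k$ produces exactly the claimed terms $F^0=S$ and $F^{-j}=\D^{j-1}(\Sym^kU)\oo\bw^{j+k}(\Sym^{n-k}U)\oo S(-j-k)$ for $1\le j\le n-2k+1$. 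All terms sit in non-positive cohomological degrees, so the quasi-isomorphism $F^\bullet\simeq R\pi_*\mc{O}_Y$ simultaneously forces $R^i\pi_*\mc{O}_Y=0$ for $i>0$ and exhibits $F^\bullet$ as an $S$-free resolution of $\pi_*\mc{O}_Y=\tilde{B}$. Minimality is automatic: consecutive free modules are generated in $S$-degrees differing by exactly $1$, so all differentials have entries in the maximal ideal.

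To identify $\tilde{B}$ with $B$, note that $Y$ is integral as the total space of a vector bundle on $\PP^k$, so $\tilde{B}=\mc{O}_Y(Y)$ is a domain and the kernel $I_{\tilde{B}}\subseteq S$ of $S\twoheadrightarrow\tilde{B}$ is a prime ideal. Because $\pi$ is surjective onto $\widehat{\Sigma}_k$ (Lemma~\ref{lem:desingularization}), we have $V(I_{\tilde{B}})=\widehat{\Sigma}_k$, so $\sqrt{I_{\tilde{B}}}=I_B$ by the Nullstellensatz. Since a prime ideal equals its own radical, $I_{\tilde{B}}=I_B$, and therefore $\tilde{B}=B$.

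The main obstacle is the Kempf--Weyman bookkeeping: carefully tracking how the two nonzero rows of the $E_1$-page of the spectral sequence (coming from $\rH^0$ and $\rH^k$ of $\PP^k$) combine into the minimal free resolution $F^\bullet$, and verifying that the $S$-gradings and $\mc{O}_{\PP^k}$-twists line up to produce exactly the claimed terms. This is standard but requires attention to sign conventions, indexing, and the correct assembly of the $d_{k+1}$-differential which is what connects the $\rH^k$-row to $F^0=S$.
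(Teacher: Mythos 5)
Your proposal is correct and follows the same Kempf--Weyman route as the paper; the only difference is that you unfold the pushforward-of-Koszul argument explicitly rather than citing \cite[Theorem~5.1.2]{weyman}, and you establish $\tilde{B}=B$ via the Nullstellensatz and primeness of $I_{\tilde B}$ rather than the paper's observation that $S\to\tilde B$ factors through the injection $B\hookrightarrow\tilde B$ (both are fine). One small slip in the minimality discussion: the generator degrees of $F_0$ and $F_1$ differ by $k+1$, not $1$ (the jumps are $1$ only between $F_i$ and $F_{i+1}$ for $i\ge 1$); the conclusion that all differentials land in the maximal ideal is still correct since $k+1>0$, so this does not affect the proof.
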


\begin{proof} Since the natural map $S\lra\tilde{B}$ factors through $B$, in order to prove that $\tilde{B}=B$, it suffices to check that $S$ surjects onto $\tilde{B}$. We do so by applying \cite[Theorem~5.1.2]{weyman} with $V=\PP^k$, $\mc{V}=\mc{O}_{\PP^k}$, $X=\AA^{n+1}$, $\eta=\mc{E}$, and $\xi=\Sym^{n-k}U(-1)$. We get a complex $F_{\bullet}$ of free $S$-modules, with 
\[
\begin{aligned}
F_i &= \bigoplus_{j\geq 0} \rH^j\left(\PP^k,\bw^{i+j}(\Sym^{n-k}U(-1))\right) \oo S(-i-j) \\
&= \bigoplus_{j\geq 0}\rH^j\left(\PP^k,\mc{O}_{\PP^k}(-i-j)\right) \oo \bw^{i+j}(\Sym^{n-k}U) \oo S(-i-j),
\end{aligned}
\]
whose homology groups vanish in positive degrees, and satisfy
\[
  \rH_{-i}(F_{\bullet}) = \rH^i(Y,\mc{O}_Y) = \bigoplus_{d\geq 0} \rH^i(\PP^k,\Sym^d\mc{E}), \text{ for }i\geq 0.
\]
%The $2$-step resolution (\ref{eq:ses-Edm}) of $\mc{E}$ induces for each $d\geq 0$ a resolution of $\Sym^d(\mc{E})$, where the $i$-th step is isomorphic to a direct sum of line bundles of the form $\mc{O}_{\PP^k}(-i)$. This implies that $\Sym^d(\mc{E})$ is a $0$-regular sheaf, and in particular it has vanishing higher cohomology, so in fact the only non-vanishing homology of $F_{\bullet}$ is
%\[
%  H_0(F_{\bullet}) = \rH^0(Y,\mc{O}_Y) = \tilde{B}.
%\]

To identify the terms in the complex $F_{\bullet}$, we note that a line bundle $\mc{O}_{\PP^k}(d)$ has no intermediate cohomology and
\begin{align*}
\rH^0(\PP^k,\mc{O}_{\PP^k}(d)) &= \begin{cases}
\Sym^d(\D^k U) & \text{if }d\geq 0, \\
0 & \text{otherwise},
\end{cases}\\
\rH^k(\PP^k,\mc{O}_{\PP^k}(d)) &= \begin{cases}
\D^{-d-k-1}(\Sym^k U) & \text{if }d\leq -k-1, \\
0 & \text{otherwise}.
\end{cases}
\end{align*}
It follows that $F_0=S$, and the only other non-zero terms are 
\[
  F_i = \D^{i-1}(\Sym^k U) \oo \bw^{i+k}(\Sym^{n-k}U) \oo S(-i-k)\text{ for }i=1,\dots,n-2k+1.
\]
We conclude that $\rH_i(F_\bullet)=0$ for $i<0$, and that $F_{\bullet}$ gives a minimal free resolution of $\tilde{B}$. Since $F_0=S$, the natural map $S\lra \tilde{B}$ is surjective, hence $\tilde{B}=B$, as desired.
\end{proof}

To make the results of Proposition~\ref{prop:syzygies-B} even more explicit, consider for a moment the general situation of a $\kk$-linear map
\[
  \varphi \colon V_1 \lra V_0 \oo W,
\]
where $W,V_0,V_1$ are finite dimensional $\kk$-vector spaces of dimensions $k+1,n+1$, and $m+1$ respectively. After choosing bases for $W,V_0,V_1$, we can represent $\varphi$ either as an $(m+1)\times(n+1)$ matrix $A$ of linear forms in $\Sym(W)\simeq\kk[y_0,\dots,y_k]$, or as an $(m+1)\times(k+1)$ matrix $A'$ of linear forms in $\Sym(V_0)=\kk[z_0,\dots,z_n]$. Such matrices occur frequently, for instance in the study of Rees algebras, when one is the presentation matrix of an ideal, while the other is the \defi{Jacobian dual} \cites{vasc,SUV}. Of interest to us is the encoding of $\varphi$ as a morphism of sheaves on $\bb{P}W\simeq\PP^k$
\begin{equation}\label{eq:linear-map}
 V_1 \oo \mc{O}_{\bb{P}W}(-1) \lra V_0 \oo \mc{O}_{\bb{P}W}.
\end{equation}
Taking the $(k+1)$-st symmetric power yields a Koszul complex $\mc{K}_{\bullet}$
\[
  0\lra \bw^{k+1}V_1 \oo \mc{O}_{\bb{P}W}(-k-1) \lra \cdots \lra V_1\oo\Sym^k V_0 \oo \mc{O}_{\bb{P}W}(-1) \lra \Sym^{k+1}V_0 \oo \mc{O}_{\bb{P}W}\lra 0.
\]
The intermediate sheaves in the above complex have no cohomology, and the hypercohomology spectral sequence involves precisely one interesting map
\begin{equation}\label{eq:hyper-map}
\xymatrix{
\rH^k\left(\bb{P}W, \bw^{k+1}V_1 \oo \mc{O}_{\bb{P}W}(-k-1)\right) \ar[r] \ar@{=}[d] & \rH^0(\bb{P}W,\Sym^{k+1}V_0 \oo \mc{O}_{\bb{P}W}) \ar@{=}[d] \\
\bw^{k+1}V_1 \simeq \bw^{k+1}V_1 \oo \bw^{k+1}W^{\vee} & \Sym^{k+1}V_0
}
\end{equation}
If we think of the basis of $\bw^{k+1}V_1$ as being indexed by collections of $(k+1)$ rows of the matrix $A'$, then this map associates to every such collection the corresponding (maximal) $(k+1)\times(k+1)$ minor of $A'$. 

We specialize this discussion to the case when $V_1=\Sym^{n-k}U$ (where $m=n-k+1$), $V_0=\Sym^n U$, $W=\D^k U$, and the map $\varphi$ is induced by the dual of the contraction map discussed earlier. If we choose the standard monomial bases on the three vector spaces, and denote them $x_{\bullet}$ on $V_1$, $z_{\bullet}$ on $V_0$, $y_{\bullet}$ on $W$, then we have
\begin{equation}\label{eq:pres-E}
 \varphi(x_i) = \sum_{j=0}^k z_{i+j} \oo y_j\text{ for }i=0,\dots,n-k.
\end{equation}
The matrix $A'$ then takes the form
\[
  A'=
\begin{bmatrix}
z_0 & z_1 & \cdots & z_k \\
z_1 & z_2 & \cdots & z_{k+1} \\
\vdots & \vdots & \ddots & \vdots \\
z_{n-k} & z_{n-k+1} & \cdots & z_n
\end{bmatrix}
\]
which is called a \defi{Hankel} (or \defi{catalecticant}) matrix. The map (\ref{eq:linear-map}) is the presentation of $\mc{E}$ from (\ref{eq:ses-Edm}), and therefore $\mc{K}_{\bullet}$ is a resolution of $\Sym^{k+1}\mc{E}$. The map (\ref{eq:hyper-map}) is the degree $(k+1)$ component of the differential $d_1 \colon F_1\lra F_0$ (whose cokernel is $B_{k+1}=\rH^0(\PP^k,\Sym^{k+1}\mc{E})$). Since $F_1$ is generated by its degree $(k+1)$ component, it follows that the image of $d_1$ (which is the defining ideal of $\widehat{\Sigma}_k$) is the ideal $I$ generated by the $(k+1)\times(k+1)$ minors of the Hankel matrix $A'$. The complex $F_{\bullet}$ is the \defi{Eagon--Northcott complex} associated with $A'$.

\begin{corollary}\label{cor:dim-degree-Sk} The variety $\Sigma_k$ is arithmetically Cohen--Macaulay. Its dimension and degree are computed by:
  \[
    \dim(\Sigma_k) = 2k-1,\quad \deg(\Sigma_k) = {n-k+1\choose k}\text{ for }1\leq k\leq\frac{n+1}{2}.
  \]
\end{corollary}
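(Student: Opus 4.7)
The plan is to read all three assertions off the explicit Eagon--Northcott resolution $F_{\bullet}$ furnished by Proposition~\ref{prop:syzygies-B}, combined with the desingularization $\pi\colon Y\lra\widehat{\Sigma}_k$ of Lemma~\ref{lem:desingularization}.

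For the dimension, I would first observe that $Y=\bb{A}_{\PP^k}(\mc{E})$ is the total space of the rank-$k$ bundle $\mc{E}=\mc{E}^k_{n-k}$ over $\PP^k$ by \eqref{eq:rank-det-Emd}, so $\dim Y=2k$ and surjectivity of $\pi$ forces $\dim\widehat{\Sigma}_k\leq 2k$. For the reverse inequality -- and in fact to conclude that $\pi$ is birational -- I would revisit the proof of Lemma~\ref{lem:desingularization}: a general point $f\in\widehat{\Sigma}_k$ admits an essentially unique decomposition $f=\sum_{i=1}^k c_i\,u_i^{(n)}$ with distinct $u_i\in U$ (the classical uniqueness of Waring decompositions for binary forms in the range $2k\leq n+1$), so the polynomial $g\in\Sym^k U$ with roots $u_1,\dots,u_k$ is determined up to scalar, i.e.\ $\pi^{-1}(f)$ is a single point. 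Hence $\dim\widehat{\Sigma}_k=2k$, and so $\dim\Sigma_k=2k-1$.

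The Cohen--Macaulay property is then immediate from the Auslander--Buchsbaum formula applied to the surjection $S\onto B$: since $S$ is regular of dimension $n+1$ and the resolution $F_{\bullet}$ has length $n-2k+1$, we obtain
\[
\mathrm{depth}_S(B)=(n+1)-(n-2k+1)=2k=\dim B,
\]
so $B$ is Cohen--Macaulay, i.e.\ $\Sigma_k$ is arithmetically Cohen--Macaulay.

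For the degree, the Hilbert series of $B$ is read off directly from $F_{\bullet}$:
\[
H_B(t)=\frac{1+\displaystyle\sum_{i=1}^{n-2k+1}(-1)^{i}\binom{k+i-1}{k}\binom{n-k+1}{k+i}t^{k+i}}{(1-t)^{n+1}},
\]
and $\deg\Sigma_k$ equals the value at $t=1$ of the numerator divided by $(1-t)^{n-2k+1}$. This is precisely the classical Eagon--Northcott degree of the variety of maximal minors of an $m\times r$ matrix of indeterminates with $m\geq r$, namely $\binom{m}{r-1}$; specializing to the Hankel matrix $A'$ with $m=n-k+1$ and $r=k+1$ yields $\binom{n-k+1}{k}$. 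The main obstacle is exactly this last evaluation: the dimension and Cohen--Macaulay statements fall out of the setup essentially for free, but extracting the closed form $\binom{n-k+1}{k}$ from the alternating sum above requires either invoking the classical Eagon--Northcott degree formula or a short binomial identity computation.
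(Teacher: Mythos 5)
Your proposal is correct, but it takes a genuinely different path from the paper's for two of the three assertions. For the dimension, you argue geometrically: $\dim Y = 2k$ since $Y$ is the total space of a rank-$k$ bundle over $\PP^k$, surjectivity of $\pi$ gives the upper bound, and generic finiteness of $\pi$ (via uniqueness of Waring decompositions of binary forms, i.e.\ Sylvester's theorem, in the range $2k \leq n+1$) gives the lower bound. The paper instead derives $\dim B = 2k$ purely algebraically, by exhibiting the Hilbert series of $B$ in lowest terms as $\bigl(\sum_{i=0}^k \binom{n-2k+i}{i}t^i\bigr)/(1-t)^{2k}$ and reading the dimension off the order of the pole; the birationality of $\pi$ is then \emph{deduced} in the subsequent Corollary~\ref{cor:pi-birational} from $\dim Y = \dim\widehat{\Sigma}_k$, so your argument effectively runs the logic in the opposite direction and implicitly invokes an external classical fact (Sylvester) rather than remaining self-contained. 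For the Cohen--Macaulay property you use Auslander--Buchsbaum to get $\operatorname{depth}_S B = (n+1)-(n-2k+1) = 2k = \dim B$; this is the same content as the paper's one-liner that codimension equals projective dimension. For the degree, you also read it off the numerator of the Hilbert series but outsource the closed form to the classical Eagon--Northcott degree $\binom{m}{r-1}$ for maximal minors of an $m\times r$ matrix (valid here since the degree depends only on the graded Betti numbers, which agree with those of the generic determinantal case), whereas the paper carries out the binomial manipulation explicitly by reducing the Hilbert series to lowest terms and checking $F'(t)=G'(t)$. Your route is shorter and more geometric; the paper's is more self-contained and produces the reduced Hilbert series, which is stronger information than the degree alone.
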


\begin{proof}
  \addtocounter{equation}{-1}
  \begin{subequations}
 We will prove shortly that the Hilbert series of $B$ can be expressed in lowest terms as
\begin{equation}\label{eq:HS-B}
 \op{HS}_B(t) = \frac{\sum_{i=0}^k {n-2k+i\choose i} \cdot t^i}{(1-t)^{2k}}.
\end{equation}
This implies that $\dim(\Sigma_k)+1 = \dim(B) = 2k$, and setting $t=1$ in the numerator we obtain
\[ \deg(\Sigma_k) = \sum_{i=0}^k {n-2k+i\choose i} = {n-k+1\choose k}\]
(the sum is the number of monomials of degree $\le k$ in $(n+1-2k)$ variables, which by homogenization, is the number of monomials of degree exactly $k$ in $(n+2-2k)$ variables, which is the right side). Since $B$ has codimension $(n+1-2k)$, equal to the projective dimension as computed by the resolution in Proposition~\ref{prop:syzygies-B}, it follows that $B$ is a Cohen--Macaulay module, hence $\Sigma_k$ is arithmetically Cohen--Macaulay.

To prove (\ref{eq:HS-B}), we use the minimal free resolution of $B$ from Proposition~\ref{prop:syzygies-B} to obtain
\[ \op{HS}_B(t) = \frac{1 + \sum_{i=1}^{n-2k+1} (-1)^i \binom{i-1+k}{k} \binom{n-k+1}{i+k} \cdot t^{i+k}}{(1-t)^{n+1}}.\]
If we write $F(t)$ for the numerator, then (\ref{eq:HS-B}) is equivalent to $F(t)=G(t)$, where
\[ G(t) = (1-t)^{n+1-2k}\cdot\sum_{i=0}^k {n-2k+i\choose i} \cdot t^i.\]
Since $F(t)$ and $G(t)$ have constant term $1$, to show they coincide it suffices to check that $F'(t)=G'(t)$. We have
\[ 
\begin{aligned}
F'(t) &= \sum_{i=1}^{n-2k+1}(-1)^i {i-1+k\choose k}{n-k+1\choose i+k}(i+k)\cdot t^{i+k-1} \\
&=\sum_{i=1}^{n-2k+1}(-1)^i {n-k\choose k}{n-2k\choose i-1}(n-k+1)\cdot t^{i+k-1} \\
&\overset{j=i-1}{=} -(n-k+1){n-k\choose k}\cdot t^k\cdot\sum_{j=0}^{n-2k}(-1)^j{n-2k\choose j}\cdot t^j \\
& = -(n-k+1){n-k\choose k}\cdot t^k\cdot (1-t)^{n-2k}.
\end{aligned}
\]
Using the product rule for $G'(t)$ and dividing by $(1-t)^{n-2k}$, we obtain
\[
\frac{G'(t)}{(1-t)^{n-2k}} = -(n-2k+1)\cdot\left(\sum_{i=0}^k {n-2k+i\choose i} \cdot t^i\right) + (1-t)\cdot\left(\sum_{i=0}^k {n-2k+i\choose i} i \cdot t^{i-1}\right).
\]
In the above sum, the coefficient of $t^i$ vanishes for $0\leq i\leq k-1$ due to the identity
\[ -(n-2k+1){n-2k+i\choose i} + {n-2k+i+1\choose i+1}(i+1) - {n-2k+i\choose i}i = 0,\]
while the coefficient of $t^k$ is
\[-(n-2k+1){n-k\choose k} - {n-k\choose k} k = -(n-k+1){n-k\choose k}.\]
This shows that $F'(t)=G'(t)$, concluding the proof.
\end{subequations}
\end{proof}

\begin{corollary}\label{cor:pi-birational}
 The morphism $\pi$ is birational onto $\widehat{\Sigma}_k$. The variety $\widehat{\Sigma}_k$ is normal with rational singularities.
\end{corollary}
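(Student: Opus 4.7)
The plan is to prove three statements in sequence: that $\pi$ is birational, that $\widehat{\Sigma}_k$ is normal, and that $R^i\pi_*\mc{O}_Y = 0$ for $i>0$. Together these imply both conclusions of the corollary.

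For birationality, I would first observe that $\dim Y = \dim\PP^k + \rk(\mc{E}) = 2k = \dim\widehat{\Sigma}_k$ by Corollary~\ref{cor:dim-degree-Sk}, so the proper surjection $\pi$ (Lemma~\ref{lem:desingularization}) is generically finite. To see it has degree one, write a general $f\in\widehat{\Sigma}_k$ as $f = \sum_{i=1}^k c_i u_i^{(n)}$ with $c_i\neq 0$ and distinct $u_i\in U$. The identity $\langle u^{(n)},g\rangle = g(u)\cdot u^{(n-k)}$, which follows from the compatibility of comultiplication $\D^n U \to \D^{n-k}U\oo\D^k U$ with the pairing $\D^k U\times\Sym^k U\to\kk$, combined with the linear independence of $u_1^{(n-k)},\dots,u_k^{(n-k)}$ (valid because $n-k\geq k-1$), forces any $g\in\Sym^k U$ with $\langle f,g\rangle=0$ to vanish at each $u_i$. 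So $[g]\in\PP^k$ is uniquely determined, and the fiber $\pi^{-1}(f)$ consists of the single point $(f,[g])$.

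Normality then follows by a standard argument: $Y=\bb{A}_{\PP^k}(\mc{E})$ is smooth as the total space of a vector bundle, so with $\pi$ proper and birational the pushforward $\pi_*\mc{O}_Y$ coincides with the integral closure of $B$ inside its function field (a variant of Zariski's main theorem). Since Proposition~\ref{prop:syzygies-B} identifies $\pi_*\mc{O}_Y$ with $B$ itself, $B$ is its own normalization and $\widehat{\Sigma}_k$ is normal.

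For rational singularities, with $\pi$ now a resolution of singularities, I need $R^i\pi_*\mc{O}_Y=0$ for $i>0$. Since $\widehat{\Sigma}_k$ is affine, this is equivalent to $\rH^i(Y,\mc{O}_Y)=0$ for $i>0$; and because $\psi\colon Y\to\PP^k$ is affine, one has $\rH^i(Y,\mc{O}_Y)=\bigoplus_{d\geq 0}\rH^i(\PP^k,\Sym^d\mc{E})$. This is precisely the graded object computed by $\rH_{-i}(F_\bullet)$ in the Kempf--Weyman construction from the proof of Proposition~\ref{prop:syzygies-B}. Because $F_\bullet$ is concentrated in non-negative homological degrees (the cohomology constraints on $\mc{O}_{\PP^k}(-i-j)$ force $i\geq 0$ in each summand defining $F_i$), the desired vanishing is automatic. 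The main hurdle I anticipate is really the birationality step, where the uniqueness of $[g]$ hinges on the inequality $n\geq 2k-1$ implicit in the interesting range of secant varieties.
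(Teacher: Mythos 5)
Your proof is correct, and it replaces the paper's citations with self-contained arguments, at the cost of slightly more work. The comparison:

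\emph{Birationality.} The paper leverages $\pi_*\mc{O}_Y = \mc{O}_{\widehat{\Sigma}_k}$ from Proposition~\ref{prop:syzygies-B}: a proper generically finite morphism satisfying $\pi_*\mc{O}_Y = \mc{O}_X$ already has connected fibers, hence is birational. This is shorter and avoids coordinates. Your explicit fiber computation (that $\pi^{-1}(f)$ is a single point for generic $f$ because $g$ is forced to vanish at the $k$ distinct points $u_1,\dots,u_k$, using $\langle u^{(n)},g\rangle = g(u)\,u^{(n-k)}$ and the linear independence of $u_i^{(n-k)}$ when $n\geq 2k-1$) is valid, and in fact the paper presents essentially this argument \emph{after} the corollary, recast in the language of Hankel matrices, as a ``more concrete interpretation'' of the birationality. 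So you have promoted the informal justification to the main proof.

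\emph{Normality.} Your Zariski-main-theorem argument ($Y$ smooth, $\pi$ proper birational, hence $\pi_*\mc{O}_Y$ is the integral closure of $B$; combined with $\pi_*\mc{O}_Y = B$ this gives normality) is a clean replacement for the paper's citation to \cite[Theorem~5.1.3(c)]{weyman}, and is essentially what that theorem encapsulates.

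\emph{Rational singularities.} Here is the one place where you should be careful. You verify $R^i\pi_*\mc{O}_Y = 0$ for $i>0$ from the Kempf complex being concentrated in non-negative homological degrees — correct, and this is exactly ``the description of the sheaf cohomology groups of $\mc{O}_Y$'' that the paper points to. In characteristic zero this, together with normality, \emph{is} the definition of rational singularities, and you are done. However, the paper works over a field of arbitrary characteristic, and the characteristic-free notion of rational singularities used by Weyman (following Kempf) also requires $R^i\pi_*\omega_Y = 0$ for $i>0$; in characteristic zero this is automatic by Grauert--Riemenschneider, but not in general. In the present setup $\omega_Y = \psi^*(\omega_{\PP^k}\otimes\det\mc{E}) = \psi^*\mc{O}_{\PP^k}(n-2k)$, and since $\Sym^d\mc{E}$ is $0$-regular (as observed in the proof of Lemma~\ref{lem:MCMs}) one gets $\rH^i(\PP^k,\Sym^d\mc{E}(n-2k))=0$ for $i>0$ whenever $n\geq 2k-1$, so the extra condition does hold — but your write-up does not mention it, whereas Weyman's theorem packages it automatically. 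Adding one sentence for the $\omega_Y$ vanishing would close this gap and make the argument fully characteristic-free.
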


\begin{proof}
 Since $\dim(Y) = \dim(\widehat{\Sigma}_k)$, it follows that $\pi\colon Y\lra\widehat{\Sigma}_k$ is generically finite. Combining this with the equality $\pi_*\mc{O}_Y = \mc{O}_{\widehat{\Sigma}_k}$ established in Proposition~\ref{prop:syzygies-B}, we see that $\pi$ is birational \stacks{03H2}, hence it provides a resolution of singularities of $\widehat{\Sigma}_k$. The fact that $\widehat{\Sigma}_k$ is normal with rational singularities is now a consequence of \cite[Theorem~5.1.3(c)]{weyman}, and of the description of the sheaf cohomology groups of $\mc{O}_Y$ explained in the proof of Proposition~\ref{prop:syzygies-B}.
\end{proof}

Returning to the explicit presentation (\ref{eq:pres-E}) of the bundle $\mc{E}$, and using the affine coordinates $z_0,\dots,z_n$ on $\AA^{n+1}$, and the projective coordinates $y_0,\dots,y_k$ on $\PP^k$ as before, it follows that $Y$ is defined as a subvariety in $\AA^{n+1}\times\PP^k$ by the condition
\[ \begin{bmatrix}
z_0 & z_1 & \cdots & z_k \\
z_1 & z_2 & \cdots & z_{k+1} \\
\vdots & \vdots & \ddots & \vdots \\
z_{n-k} & z_{n-k+1} & \cdots & z_n
\end{bmatrix} \cdot 
\begin{bmatrix}
y_0 \\ y_1 \\ \vdots \\ y_k
\end{bmatrix}
=
\vec{0}.
\]
Representing each element of $\AA^{n+1}$ as a Hankel matrix $M$, and each element of $\PP^k$ as a non-zero column vector $\vec{v}$ up to scaling, we get
\[
  Y = \{ (M,[\vec{v}]) : M\cdot\vec{v}=\vec{0}\},\text{ and } \pi \colon Y\lra\AA^{n+1}\text{ is given by }\pi(M,[\vec{v}])=M.
\]
Since the generic element $M\in\widehat{\Sigma}_k$ has rank $k$, we have that $\ker(M)$ is one-dimensional, hence the non-zero vector $\vec{v}$ in $\ker(M)$ is uniquely defined up to scaling. This gives a more concrete interpretation of the fact that $\pi$ is birational onto $\widehat{\Sigma}_k$. It also shows that the locus where $\pi$ fails to be an isomorphism is identified with the set of Hankel matrices of rank $\leq(k-1)$, that is, with $\widehat{\Sigma}_{k-1}$. We are now ready to prove the final conclusion of Theorem~\ref{thm:Hankel}.

\begin{proposition}\label{prop:class-group}
 The class group of $\widehat{\Sigma}_k$ is isomorphic to $\bb{Z}/(n-2k+2)\bb{Z}$.
\end{proposition}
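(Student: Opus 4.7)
The plan is to compute $\operatorname{Cl}(\widehat{\Sigma}_k)$ using the desingularization $\pi\colon Y\to\widehat{\Sigma}_k$ from Corollary~\ref{cor:pi-birational} together with excision for divisor class groups. Since $Y = \bb{A}_{\PP^k}(\mc{E})$ is a vector bundle over smooth $\PP^k$, we have $\operatorname{Cl}(Y) = \operatorname{Pic}(Y) = \operatorname{Pic}(\PP^k) = \bb{Z}\cdot H$ with $H := \psi^*\mc{O}_{\PP^k}(1)$. The Hankel-matrix description at the end of the section identifies the locus where $\pi$ fails to be an isomorphism as $E := \pi^{-1}(\widehat{\Sigma}_{k-1})$: over a Hankel matrix of rank exactly $k-1$ the fiber of $\pi$ is the projectivization of its $2$-dimensional kernel, a $\PP^1$, so combined with $\dim\widehat{\Sigma}_{k-1} = 2(k-1)$ from Corollary~\ref{cor:dim-degree-Sk}, we conclude that $E$ is an irreducible prime divisor of dimension $2k-1$ in $Y$. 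Because $\widehat{\Sigma}_{k-1}$ has codimension $2$ in $\widehat{\Sigma}_k$, excision yields
\[
 \operatorname{Cl}(\widehat{\Sigma}_k) \simeq \operatorname{Cl}(Y)/\bb{Z}\cdot[E] \simeq \bb{Z}/[E]\bb{Z}.
\]

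It remains to identify $[E] = n-2k+2$ in $\bb{Z}$. My approach is to exhibit $\mc{O}_Y(E) \simeq \mc{O}_Y((n-2k+2)H)$ by describing $E$ as a zero scheme. On the open locus where $g$ has distinct roots $u_1,\dots,u_k$, every $f \in \mc{E}^\vee_{[g]}$ has a unique expansion $f = \sum_i a_i u_i^{(n)}$ by the argument in the proof of Lemma~\ref{lem:desingularization}, and tracing through that argument shows $f \in \widehat{\Sigma}_{k-1}$ iff some $a_i$ vanishes---equivalently, iff $a_1\cdots a_k = 0$. Since $\prod a_i$ is $S_k$-invariant under permuting the roots, it descends to a well-defined regular function on the $g$-with-distinct-roots locus of $Y$, extending to a global section of a sub-line bundle of $\psi^*\Sym^k\mc{E}$ whose zero scheme is $E$. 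Tracking the twist using $\det\mc{E} = \mc{O}_{\PP^k}(n-k+1)$ together with the Hermite reciprocity identity $\bw^k(\Sym^{n-k}U) \simeq \Sym^{n-2k+1}(\D^k U)$ applied at the top of the Koszul resolution of $\Sym^k\mc{E}$ identifies this sub-line bundle as $\mc{O}_{\PP^k}(n-2k+2)$, yielding $[E] = n-2k+2$.

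The hardest step will be rigorously pinning down the sub-line bundle and verifying scheme-theoretic multiplicity one for the vanishing locus of this section; I would mirror the Chern-class degree computation of Lemma~\ref{lem:zero-Z}, testing against a general curve pulled back from $\PP^k$. As sanity checks, the formula recovers $\bb{Z}/n\bb{Z}$ for $k=1$ (the cone over a degree-$n$ rational normal curve, where $E$ is the zero section of $\mc{O}_{\PP^1}(n)$ with self-intersection $n$), trivial class group when $n = 2k-1$ (so that $\widehat{\Sigma}_k = \AA^{n+1}$), and $\bb{Z}/2\bb{Z}$ in the Gorenstein hypersurface case $n = 2k$ (e.g., the cone over a smooth conic).
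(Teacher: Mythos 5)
Your overall strategy matches the paper exactly: resolve $\widehat{\Sigma}_k$ by $Y=\bb{A}_{\PP^k}(\mc{E})$, use that $\pi$ is an isomorphism away from $Z=\pi^{-1}(\widehat{\Sigma}_{k-1})$ and that $\widehat{\Sigma}_{k-1}$ has codimension two, apply the excision sequence $\bb{Z}\to\op{Cl}(Y)\to\op{Cl}(Y\setminus Z)\to 0$ with $\op{Cl}(Y)\simeq\bb{Z}$, and identify $[Z]$. The difference is in how the class of $Z$ is pinned down, and there your plan has two soft spots.

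First, your section. You propose $\prod_i a_i$ on the locus where $g$ has distinct roots and then argue it extends. The paper instead builds the section globally and without stratifying: the multiplication $\Sym^{n-k+1}U\oo\Sym^{k-1}U\to\Sym^n U$ induces a map $\Sym^{k-1}U\oo\mc{E}^k_{n-2k+1}\to\mc{E}$ on $\PP^k$, which after pulling back to $Y$ and pairing against the tautological map $\psi^*\mc{E}\to\mc{O}_Y$ gives a morphism $\Delta\colon\Sym^{k-1}U\oo\mc{O}_Y\to\psi^*(\mc{E}^k_{n-2k+1})^\vee$ of rank-$k$ bundles. Then $\det(\Delta)$ is automatically a section of $\psi^*\det(\mc{E}^k_{n-2k+1})^\vee=\psi^*(\mc{O}_{\PP^k}(-(n-2k+2)))$, so the twist $n-2k+2$ falls out of (\ref{eq:rank-det-Emd}) with no Hermite-reciprocity bookkeeping needed. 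On the fiber over $(f,[g])$, $\Delta$ is $h\mapsto\langle f,h\rangle$, and its degeneracy locus is exactly your ``some $a_i$ vanishes'' locus when $g$ has distinct roots; so $\det(\Delta)$ is a clean globalization of your $\prod a_i$, but you would still need to supply this kind of construction to make the extension step rigorous.

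Second, the multiplicity-one issue, which you correctly flag as the hard point. You propose a Chern-class degree computation, which is not obviously adequate (a non-reduced scheme structure has the same degree as its reduction with multiplicity, so degree alone will not distinguish multiplicity one). The paper does something different and cleaner: the set-theoretic statement already shows $|\op{Cl}(\widehat{\Sigma}_k)|$ divides $n-2k+2$, and Lemma~\ref{lem:MCMs} independently produces $n-2k+2$ pairwise non-isomorphic rank-one reflexive modules $M_0,\dots,M_{n-2k+1}$ (distinguished by their minimal number of generators), forcing equality and hence forcing $\det(\Delta)$ to cut out $Z$ with multiplicity one. If you want to keep your plan self-contained you would need a genuine local-ring argument (or the MCM count) for this step; as written, the proposal leaves a real gap there, which you at least had the good instinct to identify.
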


\begin{proof}
  \addtocounter{equation}{-1}
  \begin{subequations}
    We let $U=\widehat{\Sigma}_k \setminus \widehat{\Sigma}_{k-1}$, define
\[ Z= \pi^{-1}(\widehat{\Sigma}_{k-1}) \subset Y,\]
and note that as remarked earlier that $\pi$ establishes an isomorphism between $Y\setminus Z$ and $U$. We will show that $Z$ is a divisor on $Y$, defined by a section of $\psi^*(\mc{O}_{\PP^k}(-n+2k-2))$ (see also \cite[Proposition~6.14]{ott-val}). Since $\widehat{\Sigma}_{k-1}$ has codimension two inside $\widehat{\Sigma}_{k}$, it follows that
\[ \op{Cl}(\widehat{\Sigma}_{k}) = \op{Cl}(U)=\op{Cl}(Y\setminus Z).\]
Moreover, we have an exact sequence
\[ \bb{Z} \lra \op{Cl}(Y) \lra \op{Cl}(Y\setminus Z) \lra 0,\]
where the first map sends $1$ to $[Z]$ \stacks{02RX}. Since $Y$ is a vector bundle over $\PP^k$, it follows that $\op{Cl}(Y)=\bb{Z}$, generated by the pullback of a hyperplane class along $\psi$ (\stacks{02TY} and \stacks{0BXJ}). Since $Z$ is cut out by a section of $\psi^*(\mc{O}_{\PP^k}(-n+2k-2))$, $[Z]$ generates the subgroup $(n-2k+2)\bb{Z} \subset \bb{Z}=\op{Cl}(Y)$, from which the desired conclusion follows.

To prove that $Z$ has the desired properties, we consider the natural map on $\PP^k$
\begin{equation}\label{eq:sym-o-E-to-E}
 \Sym^{k-1}U \oo \mc{E}^k_{n-2k+1} \lra \mc{E},
\end{equation}
defined analogously to (\ref{eq:EU-to-next-E}) by the multiplication $\Sym^{n-k+1}U \oo \Sym^{k-1}U \lra \Sym^n U$. Pulling back along $\psi$ and applying the natural map $\psi^*(\mc{E}) \lra \mc{O}_Y$, we obtain a morphism
\[
\xymatrix{
 \Sym^{k-1}U \oo \mc{O}_Y \ar@/^2pc/[rr]^-\Delta \ar[r] & \psi^*(\mc{E}) \oo \psi^*\left(\mc{E}^k_{n-2k+1}\right)^{\vee} \ar[r] & \psi^*\left(\mc{E}^k_{n-2k+1}\right)^{\vee} \\
 }
 \]
of vector bundles of rank $k$ on $Y$. Since $\det(\mc{E}^k_{n-2k+1}) = \mc{O}_{\PP^k}(n-2k+2)$, we have that $\det(\Delta)$ defines a section of $\psi^*(\mc{O}_{\PP^k}(-n+2k-2))$, so to conclude we need to check that $Z$ is the degeneracy locus of $\Delta$. The restriction of $\Delta$ to the fiber at a point $(f,[g])\in Y$ as in (\ref{eq:explicit-Y}) is given by a map
\[
  \Sym^{k-1}U \lra \ker\left(\D^{n-k+1}U \overset{\langle -,g\rangle}{\lra}\D^{n-2k+2}U\right),\quad h \mapsto \langle f,h \rangle.
\]
This map drops rank precisely when there exists a non-zero $h\in\Sym^{k-1}U$ with $\langle f,h \rangle = 0$, that is, when $(f,[h]) \in \bb{A}_{\PP^{k-1}}(\mc{E}^{k-1}_{n-k+1})$. By Lemma~\ref{lem:desingularization}, this happens if and only if $f\in\widehat{\Sigma}_{k-1}$, or equivalently $(f,[g])\in Z$.

Thus, $\det(\Delta)$ defines $Z$ set-theoretically. Hence $n-2k+2$ gives an upper bound on the size of the class group of $\widehat{\Sigma}_k$. But we will construct $n-2k+2$ different representatives in Lemma~\ref{lem:MCMs}, so in fact, $\det(\Delta)$ must define $Z$ scheme-theoretically as well.
\end{subequations}
\end{proof}

It is noted in \cite[Remark~3.7]{CMSV} that each class in $\op{Cl}(\widehat{\Sigma}_k)$ is represented by a rank one MCM (maximal Cohen--Macaulay) module, and an explicit construction is given in terms of ideals of minors of Hankel matrices. For an $\SL$-equivariant realization of these MCM modules we argue using the Kempf--Weyman geometric technique. We define
\begin{equation}\label{eq:def-Mr}
  M_r = \pi_*(\psi^*(\mc{O}_{\PP^k}(r))) = {\rm H}^0(Y, \psi^*(\mc{O}_{\PP^k}(r))) \text{ for }r=0,\dots,n-2k+1,
\end{equation}
and prove the following (note that $M_0=B$).

\begin{lemma}\label{lem:MCMs}
 Up to isomorphism, the rank one maximal Cohen--Macaulay $B$-modules are $M_0,\dots,M_{n-2k+1}$. The minimal number of generators of $M_r$ is
 \[ \mu(M_r) = {r + k \choose k},\]
 and $M_{n-2k+1}$ is an Ulrich module.
\end{lemma}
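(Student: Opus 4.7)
The plan is to extend the Kempf--Weyman computation of Proposition~\ref{prop:syzygies-B} by replacing the sheaf $\mc{V}=\mc{O}_{\PP^k}$ with $\mc{V}=\mc{O}_{\PP^k}(r)$. This produces a complex $F_\bullet^{(r)}$ of free graded $S$-modules with
\[
F_i^{(r)} = \bigoplus_{j \ge 0} \rH^j\bigl(\PP^k, \mc{O}_{\PP^k}(r-i-j)\bigr) \oo \bw^{i+j}(\Sym^{n-k}U) \oo S(-i-j),
\]
whose homology in degree $-i$ computes $\rH^i(Y,\psi^*\mc{O}_{\PP^k}(r))$. The first and main technical step is to verify that these higher cohomology groups vanish for $0 \le r \le n-2k+1$; via the projection formula and $\psi_*\mc{O}_Y = \Sym\mc{E}$, this reduces to checking that $\rH^i(\PP^k, \Sym^d\mc{E}(r)) = 0$ for all $i>0$ and $d \ge 0$, which I expect to be the main obstacle. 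One natural approach is to resolve $\Sym^d\mc{E}$ through the Koszul-type complex derived from~\eqref{eq:ses-Edm} and track the resulting line bundle cohomology on $\PP^k$, using Theorem~\ref{thm:coh-wedge-Sch} where useful.

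Once the vanishing is in hand, only the summands with $j=0$ (for $0\le i\le r$) and $j=k$ (for $r+1\le i\le n-2k+1$) survive, so the resolution has length exactly $n-2k+1 = \codim(\widehat{\Sigma}_k,\AA^{n+1})$. Auslander--Buchsbaum then gives $\op{depth}(M_r) = \dim(B)$, and since $\pi$ is birational with $\psi^*\mc{O}_{\PP^k}(r)$ of rank one on $Y$, we conclude that $M_r$ is a rank one MCM $B$-module. Moreover $F_0^{(r)} = \Sym^r(\D^k U) \oo S$ is pure of degree $0$, giving
\[
\mu(M_r) = \dim \Sym^r(\D^k U) = \binom{r+k}{k}.
\]

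To show that $M_0,\dots,M_{n-2k+1}$ exhausts the rank one MCMs without repetition, I would reuse the argument in the proof of Proposition~\ref{prop:class-group}: $\op{Cl}(Y) = \bb{Z}$ is generated by $\psi^*\mc{O}_{\PP^k}(1)$, and this generator maps to a generator of $\op{Cl}(\widehat{\Sigma}_k) \cong \bb{Z}/(n-2k+2)\bb{Z}$ under the natural restriction. Hence the divisor classes $[M_r]$ are pairwise distinct and exhaust the class group, and since rank one MCM modules on the normal variety $\widehat{\Sigma}_k$ are classified up to isomorphism by $\op{Cl}(\widehat{\Sigma}_k)$, the list is complete.

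Finally, $M_{n-2k+1}$ is Ulrich: a rank one graded MCM module over $B$ has multiplicity $e(B) = \deg(\Sigma_k) = \binom{n-k+1}{k}$ by Corollary~\ref{cor:dim-degree-Sk}, which matches $\mu(M_{n-2k+1}) = \binom{n-k+1}{k}$. Equivalently, for $r=n-2k+1$ only the $j=0$ summands survive in $F_\bullet^{(r)}$, producing a \emph{linear} resolution of the expected Ulrich shape.
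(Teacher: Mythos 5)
Your approach to the Cohen--Macaulay part is a genuine variant of the paper's. The paper applies Weyman's direct MCM criterion \cite[Corollary~5.1.5]{weyman}, which reduces to checking that $\rH^i(\PP^k,\Sym^d\mc{E}\oo\mc{V}^{\vee})=0$ for $i>0$ with $\mc{V}^{\vee}=\mc{O}_{\PP^k}(2n-k-r)$; you instead verify $\rH^i(\PP^k,\Sym^d\mc{E}(r))=0$ for $i>0$ to get an exact complex $F_\bullet^{(r)}$ of length $n-2k+1=\codim(\widehat{\Sigma}_k)$ and then invoke Auslander--Buchsbaum. Both vanishings are immediate from the observation in the paper that $\Sym^d\mc{E}$ is $0$-regular (its resolution from \eqref{eq:ses-Edm} has $i$-th term a sum of $\mc{O}_{\PP^k}(-i)$), so the step you flag as ``the main obstacle'' is actually routine. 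For your Auslander--Buchsbaum route to go through, you should also say a word about why $F_\bullet^{(r)}$ is \emph{minimal}, so that its length is the projective dimension; this is clear here because the $j=0$ and $j=k$ terms in $F_i^{(r)}$ sit in internal degrees $i$ and $i+k$ respectively, and consecutive terms never share a degree. Your computation of $\mu(M_r)$ and the identification of the Ulrich module $M_{n-2k+1}$ by comparing $\mu$ with the multiplicity from Corollary~\ref{cor:dim-degree-Sk} coincide with the paper, and your remark that for $r=n-2k+1$ only $j=0$ summands survive (giving a linear resolution) is a nice observation.

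There is, however, a genuine gap in your argument that $M_0,\dots,M_{n-2k+1}$ exhaust the rank one MCMs. You invoke $\op{Cl}(\widehat{\Sigma}_k)\cong\bb{Z}/(n-2k+2)\bb{Z}$ from Proposition~\ref{prop:class-group} to conclude the $[M_r]$ are pairwise distinct, but the paper's proof of that proposition deliberately \emph{relies} on Lemma~\ref{lem:MCMs}: the Chern-class computation there only identifies $Z$ set-theoretically, so it only gives that $|\op{Cl}(\widehat{\Sigma}_k)|$ divides $n-2k+2$, and the paper needs the $n-2k+2$ pairwise non-isomorphic modules produced by the present lemma to pin down the order and conclude that $\det(\Delta)$ cuts out $Z$ scheme-theoretically. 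Your generator argument therefore has a hidden circularity. The fix is exactly what the paper does and what your own computation already provides: the numbers $\mu(M_r)=\binom{r+k}{k}$ are pairwise distinct for $r=0,\dots,n-2k+1$, so the $M_r$ are pairwise non-isomorphic \emph{independently} of any class group computation. Since $B$ is normal, rank one reflexive (in particular rank one MCM) modules are classified by $\op{Cl}(B)$, whose order divides $n-2k+2$; having exhibited $n-2k+2$ distinct classes forces equality and completes the list.
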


We recall that an MCM module $M$ is called \defi{Ulrich} (short for \defi{maximally generated maximal Cohen--Macaulay}) if $\mu(M)$ equals the multiplicity of $M$, see \cites{BHU,HUB} for basic information and references.

\begin{proof} Since $M_r$ is the direct image of a line bundle on the desingularization $Y$ of $\widehat{\Sigma}_k$, it follows that $M_r$ is a rank one $B$-module. To see that $M_r$ is Cohen--Macaulay, we apply \cite[Corollary~5.1.5]{weyman}. With the notation in loc. cit., if $\mc{V}=\mc{O}_{\PP^k}(r)$ then we have that $\mc{V}^{\vee} = \mc{O}_{\PP^k}(2n-k-r)$. We then have to check that
  \[
    {\rm R}^i\pi_*(\psi^*\mc{V}^{\vee}) = 0\text{ for }i>0.
  \]
Equivalently, we have to show that for each $d\geq 0$,
\[
  \rH^i(\PP^k,\Sym^d\mc{E} \oo \mc{O}_{\PP^k}(2n-k-r)) = 0\text{ for }i>0.
\]
By our assumption on $r$, we have $2n-k-r + 1 \geq 0$. The $2$-step resolution (\ref{eq:ses-Edm}) of $\mc{E}$ induces for each $d\geq 0$ a resolution of $\Sym^d(\mc{E})$, where the $i$-th step is isomorphic to a direct sum of line bundles of the form $\mc{O}_{\PP^k}(-i)$. This implies that $\Sym^d(\mc{E})$ is a $0$-regular sheaf. Therefore it is also $(2n-k-r + 1)$-regular, hence the desired vanishing statement holds.

To calculate $\mu(M_r)$, we consider the minimal free resolution $F^r_{\bullet}$ of $M_r$. As in the proof of Proposition~\ref{prop:syzygies-B}, we have using \cite[Theorem~5.1.2]{weyman} that
\[ 
\begin{aligned}
F^r_0 &= \bigoplus_{j\geq 0}\rH^j\left(\PP^k,\mc{O}_{\PP^k}(-j+r)\right) \oo \bw^{j}(\Sym^{n-k}U) \oo S(-j) \\
&=\rH^0\left(\PP^k,\mc{O}_{\PP^k}(r)\right) \oo S = \Sym^r(\D^k U) \oo S.
\end{aligned}
\]
The space of minimal generators of $M_r$ is then $\Sym^r(\D^k U)$, which has dimension ${r+k\choose k}$. %, from which the formula for $\mu(M_r)$ follows. 
Since the numbers $\mu(M_r)$ are distinct, it follows that the modules $M_r$ are pairwise non-isomorphic. Since $B$ is normal, the class group of $B$ is isomorphic to the group of isomorphism classes rank 1 reflexive $B$-modules \stacks{0EBM}. Since the class group of $B$ has size $(n-2k+2)$, it follows that every rank 1 MCM $B$-module is isomorphic to $M_r$ for some~$r$. 

Since $M_r$ is a rank one MCM module supported on $\widehat{\Sigma}_k$, it follows from Corollary~\ref{cor:dim-degree-Sk} that its multiplicity is ${n-k+1\choose k}$. This quantity agrees with $\mu(M_r)$ precisely when $r=n-2k+1$, proving that $M_{n-2k+1}$ is Ulrich.
\end{proof}

\section{Self-duality for the rank one Ulrich module}\label{sec:self-duality}

In this section we analyze the Ulrich module $M_{n-2k+1}$ in Lemma~\ref{lem:MCMs}, and prove that it is self-dual. To explain this, we note that the definition (\ref{eq:def-Mr}) can be extended to arbitrary $r\in\bb{Z}$, but the resulting modules will typically fail to be Cohen--Macaulay. With the notation in Lemma~\ref{lem:MCMs}, if $\mc{V}=\mc{O}_{\PP^k}(n-2k+1)$ then $\mc{V}^{\vee}=\mc{O}_{\PP^k}(-1)$, so in fact $M_{-1}$ is MCM and is the dual of $M_{n-2k+1}$. We will show that the section $\det(\Delta)$ constructed in the proof of Proposition~\ref{prop:class-group} gives rise to an isomorphism $M_{-1}\simeq M_{n-2k+1}$ which is given at the level of minimal generators by Hermite reciprocity.

\begin{theorem}\label{thm:self-dual-Ulrich}
 The graded modules $M_{-1}$ and $M_{n-2k+1}$ are generated in a single degree, with spaces of minimal generators
 \[ \bw^k(\Sym^{n-k}U) \text{ and }\Sym^{n-2k+1}(\D^k U)\]
 respectively. There is an isomorphism $M_{-1}\simeq M_{n-2k+1}$ which restricts to (\ref{eq:Hermite-skew-sym}) on the space of minimal generators.
\end{theorem}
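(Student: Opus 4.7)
The plan is threefold: compute the minimal generators of both modules, promote $\det(\Delta)$ to an explicit isomorphism, and identify the induced map on generators with Hermite reciprocity.

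First I would compute the minimal generators. For $M_{n-2k+1}$, Lemma~\ref{lem:MCMs} already gives $\Sym^{n-2k+1}(\D^k U)$ in degree $0$. For $M_{-1}$, I would apply the Kempf--Weyman technique exactly as in Proposition~\ref{prop:syzygies-B} but with $\mc{V} = \mc{O}_{\PP^k}(-1)$; the zeroth term of the resulting minimal free resolution
\[
 F^{-1}_0 = \bigoplus_{j\geq 0}\rH^j(\PP^k,\mc{O}_{\PP^k}(-j-1)) \oo \bw^j(\Sym^{n-k}U) \oo S(-j)
\]
has only the $j=k$ contribution surviving (via $\rH^k(\PP^k,\mc{O}(-k-1)) = \D^0(\Sym^k U) = \kk$), yielding $F^{-1}_0 = \bw^k(\Sym^{n-k}U) \oo S(-k)$. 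Thus $M_{-1}$ is minimally generated in degree $k$ by $\bw^k(\Sym^{n-k}U)$.

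Next, since $\Delta$ is linear in the fiber coordinate $f$, the section $\det(\Delta)$ has $\bb{G}_m$-weight~$k$, and multiplication by $\det(\Delta)$ defines a degree-$k$ $B$-module homomorphism $M_{n-2k+1}\to M_{-1}$. It is nonzero because $\det(\Delta)$ vanishes only on the proper subscheme $Z\subset Y$; since both modules are rank-one MCMs with the same Hilbert function (compare the Weyman resolutions from Step~1 and Proposition~\ref{prop:syzygies-B}), this injection of graded modules with equal Hilbert function is an isomorphism $M_{n-2k+1}(-k)\cong M_{-1}$.

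Finally I would identify the induced $\SL(U)$-equivariant map on minimal generators,
\[
\phi\colon\Sym^{n-2k+1}(\D^k U) \lra \bw^k(\Sym^{n-k}U),
\]
with the inverse of (\ref{eq:Hermite-skew-sym}). Unpacking, $\det(\Delta)$ is the top exterior power of the map built from the multiplication $\Sym^{n-k+1}U\oo\Sym^{k-1}U\to\Sym^n U$ of (\ref{eq:sym-o-E-to-E}) together with the base-case Hermite identification $\bw^k(\Sym^{k-1}U)\simeq\kk$. To make this rigorous, I would invoke the compatibility criterion of Proposition~\ref{prop:comm-diag} and Theorem~\ref{thm:comp-Her}: letting $n$ vary, the $\phi$'s assemble into a morphism of cyclic $\Sym(\D^k U)$-modules in the sense of Proposition~\ref{prop:hermite-free}, and Hermite reciprocity is characterized up to a global scalar by this $\Sym(\D^k U)$-linearity and by its value on the generator in minimal degree. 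The main obstacle lies precisely here: $\SL(U)$-equivariant isomorphisms between $\Sym^{n-2k+1}(\D^k U)$ and $\bw^k(\Sym^{n-k}U)$ typically form a multi-parameter family (these representations share irreducible components with multiplicities exceeding one), so Schur's lemma alone cannot pin $\phi$ down; the workaround is to use the compatibility with the $\Sym(\D^k U)$-action, which forces $\phi$ to coincide with the Hermite isomorphism after possibly rescaling $\det(\Delta)$ by a nonzero scalar.
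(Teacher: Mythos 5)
Your computation of the minimal generators of $M_{-1}$ and $M_{n-2k+1}$, and your argument that a nonzero degree-$k$ map $M_{n-2k+1}\to M_{-1}$ built from $\det(\Delta)$ must be an isomorphism (injective sheaf map, equal Hilbert functions), are both correct and parallel the paper. The gap is in your third step. You propose to pin down the induced map $\phi$ on minimal generators by showing that, with $k$ fixed and $n$ varying, the maps $\phi_n$ assemble into a $\Sym(\D^k U)$-linear morphism between the rank-one free modules of Proposition~\ref{prop:hermite-free}, and then invoking the characterization of Hermite reciprocity by such compatibility. That characterization is the right one, and the scalar ambiguity can indeed be absorbed into a rescaling of $\det(\Delta)$. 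But you never actually verify the $\Sym(\D^k U)$-linearity: the results you cite, Proposition~\ref{prop:comm-diag} and Theorem~\ref{thm:comp-Her}, concern the multiplication maps in the Schwarzenberger construction itself, not the family of sections $\det(\Delta)$, which depend on $n$ both through $\mc{E}=\mc{E}^k_{n-k}$ and through the auxiliary bundle $\mc{E}^k_{n-2k+1}$ entering in~(\ref{eq:sym-o-E-to-E}). Exhibiting the commuting squares relating $\phi_n$ and $\phi_{n+1}$ is the real content of this step, and it does not follow from what you quote.

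The paper avoids the variation in $n$ altogether. For a fixed $n$ it lifts $\delta_0\oo\mc{O}_{\PP^k}(1)$ to an explicit chain map $\mc{F}_\bullet\to\mc{G}_\bullet$ between the Koszul-type resolutions of $\bw^k(\mc{E}^k_{n-2k+1})$ and of $\Sym^k(\mc{E})$, twists by $\mc{O}_{\PP^k}(-1)$, and takes hypercohomology; since the intermediate terms are acyclic this collapses to a single commutative square identifying $\rH^0(\PP^k,\delta_0)$ with $\alpha\circ\gamma^{-1}$, where $\alpha$ is the map of Section~\ref{subsec:comp-Her} and $\gamma$ is the Hermite isomorphism~(\ref{eq:Hermite-sym-div}). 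Theorem~\ref{thm:comp-Her} then gives $\rH^0(\PP^k,\delta_0)=\beta^{-1}$. To close your gap you would need either to carry out an analogous hypercohomology computation, or to actually prove the $n$-to-$n+1$ compatibility of the sections $\det(\Delta)$ that your strategy rests on.
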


\begin{proof}
 We begin by describing the minimal free resolutions $F^{n-2k+1}_{\bullet}$ and $F^{-1}_{\bullet}$ of $M_{n-2k+1}$ and $M_{-1}$ respectively. We obtain using \cite[Theorem~5.1.2]{weyman} that (up to a shift in grading)
 \[ 
 \begin{aligned}
 F^{n-2k+1}_i &= \rH^0\left(\PP^k,\bw^i(\Sym^{n-k}U(-1))\oo\mc{O}_{\PP^k}(n-2k+1)\right) \oo S(-i) \\
 & = \bw^i(\Sym^{n-k}U) \oo \Sym^{n-2k+1-i}(\D^k U) \oo S(-i),\text{ and }
 \end{aligned}
 \]
\[ 
 \begin{aligned}
 F^{-1}_i &= \rH^k\left(\PP^k,\bw^{i+k}(\Sym^{n-k}U(-1))\oo\mc{O}_{\PP^k}(-1)\right) \oo S(-i) \\
 & = \bw^{i+k}(\Sym^{n-k}U) \oo \D^i(\Sym^k U) \oo S(-i),\text{ for }i=0,\dots,n-k+1.
 \end{aligned}
 \]
By taking $i=0$ we obtain the desired description for the spaces of minimal generators of $M_{n-2k+1}$ and $M_{-1}$. Since $\mu(M_{n-2k+1})=\mu(M_{-1})$ and $M_{n-2k+1},M_{-1}$ are rank one MCMs, it follows from Lemma~\ref{lem:MCMs} that they must be isomorphic. Before describing the isomorphism, we make explicit the graded structure of the two modules, which is induced by the usual identification of $\mc{O}_Y$ with the sheaf of graded algebras $\Sym(\mc{E})$ on $\PP^k$. We have that
\begin{align*}
  (M_{n-2k+1})_d &= \rH^0(\PP^k,\Sym^d\mc{E} \oo \mc{O}_{\PP^k}(n-2k+1)),\text{ and }\\
  (M_{-1})_d &= \rH^0(\PP^k,\Sym^{d+k}\mc{E} \oo \mc{O}_{\PP^k}(-1)).
\end{align*}

To construct an isomorphism, we note that the section $\det(\Delta)$ of $\psi^*(\mc{O}_{\PP^k}(-n+2k-2))$ in Proposition~\ref{prop:class-group} defines an injective morphism of invertible sheaves on $Y$
\[
  \delta\colon \psi^*(\mc{O}_{\PP^k}(n-2k+1)) \lra \psi^*(\mc{O}_{\PP^k}(-1)).
\]
 Making the identification of the source and target with sheaves of graded $\Sym(\mc{E})$-modules, we see that in degree $d$ the map is given by
 \[
   \delta_d\colon \Sym^d\mc{E} \oo \mc{O}_{\PP^k}(n-2k+1) \lra \Sym^{d+k}\mc{E} \oo \mc{O}_{\PP^k}(-1).
 \]
Since $\delta_d$ is injective, the same is true for $\rH^0(\PP^k,\delta_d) \colon (M_{n-2k+1})_d \lra (M_{-1})_d$. Since the source and target are finite dimensional vector spaces, we see that $\delta$ induces an isomorphism between $M_{n-2k+1}$ and $M_{-1}$. To conclude, we need to check that $\delta_0$ induces (\ref{eq:Hermite-skew-sym}).

Note that $\delta_0\oo\mc{O}_{\PP^k}(1)$ is constructed from (\ref{eq:sym-o-E-to-E}), as the composition
\[ \mc{O}(n-2k+2) \simeq \bw^k(\mc{E}^k_{n-2k+1}) \simeq \bw^k(\Sym^{k-1}U) \oo \bw^k(\mc{E}^k_{n-2k+1}) \lra \Sym^k(\mc{E}).\]
The presentation (\ref{eq:ses-Edm}) induces a resolution $\mc{F}_{\bullet}$ of $\bw^k(\mc{E}^k_{n-2k+1})$, with
\[ \mc{F}_i = \D^i(\Sym^{n-2k+1}U) \oo \bw^{k-i}(\Sym^{n-k+1}U) \oo \mc{O}_{\PP^k}(-i),\]
and a resolution $\mc{G}_{\bullet}$ of $\Sym^k(\mc{E})$, with
\[ \mc{G}_i = \bw^i(\Sym^{n-k}U) \oo \Sym^{k-i}(\Sym^n U) \oo \mc{O}_{\PP^k}(-i).\]
We lift $\delta_0\oo\mc{O}_{\PP^k}(1)$ to a map of complexes $\mc{F}_{\bullet}\lra\mc{G}_{\bullet}$, using the comultiplication
\[ \kk \simeq \bw^k(\Sym^{k-1}U) \lra \bw^i(\Sym^{k-1}U) \oo \bw^{k-i}(\Sym^{k-1}U),\]
and the natural maps
\[ \D^i(\Sym^{n-2k+1}U) \oo \bw^i(\Sym^{k-1}U) \lra \D^i(\Sym^{n-2k+1}U\oo\Sym^{k-1}U) \lra \D^i(\Sym^{n-k}U)\]
and
\[ \bw^{k-i}(\Sym^{n-k+1}U) \oo \bw^{k-i}(\Sym^{k-1}U) \lra \Sym^{k-i}(\Sym^{n-k+1}U \oo \Sym^{k-1}U) \lra \Sym^{k-i}(\Sym^n U).\]
We get a morphism of exact complexes
\[ 
\xymatrix{
0 \ar[r] & \mc{F}_k(-1) \ar[r] \ar[d] & \cdots \ar[r] & \mc{F}_0(-1) \ar[r] \ar[d] & \mc{O}_{\PP^k}(n-2k+1) \ar[r] \ar[d]^{\delta_0} & 0\\
0 \ar[r] & \mc{G}_k(-1) \ar[r] & \cdots \ar[r] & \mc{G}_0(-1) \ar[r] & \Sym^k(\mc{E})\oo \mc{O}_{\PP^k}(-1) \ar[r] & 0\\
}
\]
and note that the sheaves $\mc{F}_i(-1)$ and $\mc{G}_i(-1)$ have no cohomology for $0\leq i\leq k-1$, while $\mc{F}_k(-1)$, $\mc{G}_k(-1)$ only have top cohomology. Taking hypercohomology we obtain a commutative square
\[
\xymatrix{
\rH^k(\PP^k,\mc{F}_k(-1)) = \D^k(\Sym^{n-2k+1}U) \ar[rr]^-{\gamma} \ar[d]_-{\alpha} & & (M_{n-2k+1})_0= \Sym^{n-2k+1}(\D^k U) \ar[d]^-{\rH^0(\PP^k,\delta_0)}  \\
\rH^k(\PP^k,\mc{G}_k(-1)) = \bw^k(\Sym^{n-k}U) \ar@{=}[rr] & & (M_{-1})_0\\
}
\]
The vertical map $\a$ is by construction the isomorphism in Section~\ref{subsec:comp-Her}, while $\gamma$ is the Hermite isomorphism (\ref{eq:Hermite-sym-div}), as constructed in Section~\ref{sec:Hermite-Hilbert}. We wrote the bottom map as an equality, because this is how we identify $(M_{-1})_0$ with $\bw^k(\Sym^{n-k}U)$. It follows from Theorem~\ref{thm:comp-Her} that
\[
  \rH^0(\PP^k,\delta_0) = \alpha\circ\gamma^{-1} = \beta^{-1},
\]
is the inverse of the Hermite isomorphism (\ref{eq:Hermite-skew-sym}), concluding our proof.
\end{proof}

In \cite[Proposition~1.2]{valles}, it is shown that $\Sym^2(\mc{E}^2_{n-2})\oo \mc{O}_{\PP^2}(-n+2)$ has non-zero global sections. We show that in fact it has only one non-zero section up to scaling, and that this property extends to higher rank Schwarzenberger bundles as follows.

\begin{proposition}\label{prop:h0-SkE}
 For $n\geq 2k-1$ we have that $\rH^0(\PP^k,\Sym^k(\mc{E}) \oo \mc{O}_{\PP^k}(-n+2k-2))$ is one-dimensional.
\end{proposition}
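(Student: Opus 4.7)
The existence of at least one nonzero section is immediate from Theorem~\ref{thm:self-dual-Ulrich}: the morphism $\delta_0 \oo \mc{O}_{\PP^k}(1)\colon \mc{O}_{\PP^k}(n-2k+2) \lra \Sym^k(\mc{E})$ constructed there is equivalent to a nonzero section of $\Sym^k(\mc{E}) \oo \mc{O}_{\PP^k}(-n+2k-2)$. So the task reduces to bounding the dimension above by one.

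The plan is to work on the total space $Y = \bb{A}_{\PP^k}(\mc{E})$, using the section $\det(\Delta) \in \rH^0(Y, \psi^*\mc{O}_{\PP^k}(-n+2k-2))$ constructed in the proof of Proposition~\ref{prop:class-group}. Since $Z$ is cut out by $\det(\Delta)$, multiplication by this section trivializes $\psi^*\mc{O}_{\PP^k}(-n+2k-2)$ on the complement $Y \setminus Z$, yielding
\[ \rH^0(Y \setminus Z, \psi^*\mc{O}_{\PP^k}(-n+2k-2)) \;=\; \mc{O}(Y \setminus Z) \cdot \det(\Delta). \]
Because $\pi$ restricts to an isomorphism $Y \setminus Z \simeq \widehat{\Sigma}_k \setminus \widehat{\Sigma}_{k-1}$, and $\widehat{\Sigma}_{k-1}$ has codimension two inside the normal variety $\widehat{\Sigma}_k$ (by Corollary~\ref{cor:dim-degree-Sk}), normality gives $\mc{O}(Y \setminus Z) = B$.

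Since $\psi^*\mc{O}_{\PP^k}(-n+2k-2)$ is locally free, the restriction of global sections to $Y \setminus Z$ is injective, so $\rH^0(Y, \psi^*\mc{O}_{\PP^k}(-n+2k-2)) \subseteq B \cdot \det(\Delta)$. Under the identification
\[ \rH^0(Y, \psi^*\mc{O}_{\PP^k}(-n+2k-2)) \;=\; \bigoplus_{d \geq 0} \rH^0(\PP^k, \Sym^d(\mc{E}) \oo \mc{O}_{\PP^k}(-n+2k-2)) \]
coming from $\psi_*\mc{O}_Y = \Sym(\mc{E})$, the section $\det(\Delta)$ lies in graded degree $k$: it is the determinant of a rank-$k$ map $\Delta$ whose construction invokes the tautological morphism $\psi^*\mc{E} \to \mc{O}_Y$ (of degree $1$ in the $\Sym(\mc{E})$-grading) once in each factor. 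Consequently the degree-$k$ piece of $B \cdot \det(\Delta)$ is $B_0 \cdot \det(\Delta) = \kk \cdot \det(\Delta)$, one-dimensional, which yields the desired upper bound. The only nontrivial step is the grading claim for $\det(\Delta)$; once that is in hand, the conclusion $\rH^0(\PP^k, \Sym^k(\mc{E})(-n+2k-2)) = \kk$ follows at once.
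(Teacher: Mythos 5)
Your proof is correct, and it takes a genuinely different route from the paper's. The paper argues module-theoretically: a nonzero section of $\Sym^k(\mc{E}) \oo \mc{O}_{\PP^k}(-n+2k-2)$ induces an isomorphism $M_{n-2k+1} \simeq M_{-1}$, so two sections produce an automorphism $s$ of the rank one reflexive module $M_{n-2k+1}$; then $\Hom_B(M,M) \simeq B$, Cayley--Hamilton forces $s$ to be integral over $B$, normality gives $s \in B$, and the grading forces $s \in \kk$. You instead argue geometrically on the resolution $Y$: the section $\det(\Delta)$ trivializes $\psi^*\mc{O}_{\PP^k}(-n+2k-2)$ on $Y \setminus Z$, algebraic Hartogs (normality of $\widehat{\Sigma}_k$ together with $\codim \widehat{\Sigma}_{k-1} = 2$) gives $\mc{O}(Y\setminus Z) = B$, and the injective graded restriction $\rH^0(Y,\psi^*\mc{O}(-n+2k-2)) \hookrightarrow B\cdot\det(\Delta)$ together with the degree count $\deg\det(\Delta) = k$, $B_0 = \kk$, pins down the degree-$k$ piece. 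Both arguments hinge on normality of $B$, established in Theorem~\ref{thm:Hankel}; yours avoids the machinery of reflexive modules and integrality at the price of a more hands-on geometric setup on $Y$. One small point worth making explicit in a careful write-up: the identification $\mc{O}(Y\setminus Z) = B$ and the trivialization by $\det(\Delta)$ must be seen to be $\bb{G}_m$-equivariant (equivalently, graded), which holds because the dilation action on the fibers of $Y$ corresponds to the cone action on $\widehat{\Sigma}_k$ under the $\bb{G}_m$-equivariant map $\pi$, and $Z$ and $\det(\Delta)$ are $\bb{G}_m$-invariant; this is what justifies reading off the degree-$k$ piece from $B_0 \cdot \det(\Delta)$. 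The assertion that $\det(\Delta)$ has degree $k$ is correct and is implicit in the paper's proof of Theorem~\ref{thm:self-dual-Ulrich}, where $\delta_d$ maps $\Sym^d\mc{E}\oo\mc{O}(n-2k+1)$ to $\Sym^{d+k}\mc{E}\oo\mc{O}(-1)$.
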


\begin{proof}
  The proof of Theorem~\ref{thm:self-dual-Ulrich} shows that $\rH^0(\PP^k,\Sym^k(\mc{E}) \oo \mc{O}_{\PP^k}(-n+2k-2))$ contains at least one non-zero section, and that any such section yields an isomorphism $M_{n-2k+1}\simeq M_{-1}$. Suppose now that $s_1,s_2$ are non-zero sections, write $M=M_{n-2k+1}$, and let $s=s_1^{-1}\circ s_2$ denote the induced automorphism of $M$. Since $M$ is a rank one reflexive module, we have an isomorphism $\Hom_B(M,M)\simeq B$. Moreover, identifying $M$ with a fractional ideal, we can represent $s$ as the multiplication by an element in the fraction field $\op{Frac}(B)$, which we also denote by $s$. Since $sM\subseteq M$ and $M$ is a finitely generated $B$-module, it follows from the Cayley--Hamilton theorem that $s$ is integral over $B$. By Theorem~\ref{thm:Hankel}, $B$ is normal, hence $s\in B$. Since $M$ is graded, and multiplication by $s$ is an isomorphism on $M$, it follows that $s$ must have degree $0$, that is, $s\in\kk$. It follows that $s_1,s_2$ are proportional, concluding our proof.
\end{proof}

The following remark explains how to think of the unique, up to scaling, non-zero section of $\Sym^k(\mc{E}) \oo \mc{O}_{\PP^k}(-n+2k-2)$ as Hermite reciprocity.
 
\begin{remark}
 Using the resolution $\mc{G}_{\bullet}$ of $\Sym^k(\mc{E})$ in the proof of Theorem~\ref{thm:self-dual-Ulrich}, we can identify $\rH^0(\PP^k,\Sym^k(\mc{E}) \oo \mc{O}_{\PP^k}(-n+2k-2))$ as the kernel of the natural map
 \[
   \rH^k(\PP^k,\mc{F}_k(-n+2k-2)) \lra \rH^k(\PP^k,\mc{F}_{k-1}(-n+2k-2)).
 \]
 We have
 \[ 
 \begin{aligned}
 \rH^k(\PP^k,\mc{F}_k(-n+2k-2)) &= \bw^k(\Sym^{n-k}U) \oo \D^{n-2k+1}(\Sym^k U) \\
 &= \Hom_\kk\left(\Sym^{n-2k+1}(\D^k U),\bw^k(\Sym^{n-k}U)\right).
 \end{aligned}
\]
Our results can be summarized as saying that $\rH^0(\PP^k,\Sym^k(\mc{E}) \oo \mc{O}_{\PP^k}(-n+2k-2))$ is naturally identified with the subspace of $\Hom_\kk\left(\Sym^{n-2k+1}(\D^k U),\bw^k(\Sym^{n-k}U)\right)$ spanned by the inverse of the Hermite isomorphism (\ref{eq:Hermite-skew-sym}).
\end{remark} 
 
The isomorphism between $M_{n-2k+1}$ and $M_{-1}$ lifts to an isomorphism between their free resolutions, leading up to a series of $\SL$-isomorphisms that extend Theorem~\ref{thm:Hermite}.

\begin{corollary}\label{cor:gen-Her-isos}
 For each $i=0,\dots,n-k+1$ we have $\SL$-equivariant isomorphisms
 \[
   \bw^i(\Sym^{n-k}U) \oo \Sym^{n-2k+1-i}(\D^k U)  \simeq \bw^{i+k}(\Sym^{n-k}U) \oo \D^i(\Sym^k U).
 \]
\end{corollary}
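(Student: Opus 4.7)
The strategy is to deduce the corollary directly from Theorem~\ref{thm:self-dual-Ulrich} by lifting the $\SL$-equivariant isomorphism $M_{n-2k+1} \simeq M_{-1}$ to an isomorphism of their minimal free resolutions, and then reading off the conclusion in each homological degree.

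From the proof of Theorem~\ref{thm:self-dual-Ulrich} I already have the explicit $\SL$-equivariant descriptions
\[ F^{n-2k+1}_i = \bw^i(\Sym^{n-k}U) \oo \Sym^{n-2k+1-i}(\D^k U) \oo S(-i), \]
\[ F^{-1}_i = \bw^{i+k}(\Sym^{n-k}U) \oo \D^i(\Sym^k U) \oo S(-i), \]
so both resolutions are pure: the module $F^{\bullet}_i$ is generated in internal degree $i$. By the uniqueness of minimal free resolutions, the isomorphism $M_{n-2k+1} \simeq M_{-1}$ lifts to a chain isomorphism $F^{n-2k+1}_\bullet \simeq F^{-1}_\bullet$, and purity forces any such lift to restrict in homological degree $i$ to an isomorphism of the spaces of minimal generators. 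Provided the lift can be chosen $\SL$-equivariantly, this immediately yields the claimed identification
\[ \bw^i(\Sym^{n-k}U) \oo \Sym^{n-2k+1-i}(\D^k U) \simeq \bw^{i+k}(\Sym^{n-k}U) \oo \D^i(\Sym^k U). \]

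The main obstacle will be justifying the existence of an $\SL$-equivariant lift over an arbitrary base field $\kk$, since complete reducibility is unavailable in positive characteristic. I would handle this by producing the lift functorially rather than by a representation-theoretic splitting argument: the isomorphism $\delta$ of Theorem~\ref{thm:self-dual-Ulrich} is induced by the $\SL$-invariant section $\det(\Delta)$ of $\psi^*(\mc{O}_{\PP^k}(-n+2k-2))$, and multiplication by $\det(\Delta)$ commutes with every stage of the Kempf--Weyman construction used to build $F^{n-2k+1}_\bullet$ and $F^{-1}_\bullet$ from \cite[Theorem~5.1.2]{weyman}. Tracking $\det(\Delta)$ through the two double complexes of twisted Koszul resolutions and passing to the associated minimal free resolutions on the affine cone produces the required equivariant chain isomorphism, at which point the corollary follows at once.
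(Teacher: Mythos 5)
Your proof is correct and takes essentially the same approach as the paper: read off the spaces of minimal generators of the resolutions $F^{n-2k+1}_\bullet$ and $F^{-1}_\bullet$ constructed in the proof of Theorem~\ref{thm:self-dual-Ulrich}. The equivariance worry you flag can be dispatched more directly by functoriality of $\Tor_i^S(\kk,-)$: the $\SL$-equivariant isomorphism $M_{n-2k+1}\simeq M_{-1}$ induces an $\SL$-equivariant isomorphism on each $\Tor$ group, and since the induced map modulo the maximal ideal is independent of the choice of chain-level lift, it is exactly the map between the spaces of minimal generators in homological degree $i$ — no equivariant lift of the full chain map is needed, though the $\det(\Delta)$ argument you sketch would also produce one.
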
 

\begin{proof} The two sides of the isomorphism above correspond to the minimal generators of the free modules $F^{n-2k+1}_i$ and $F^{-1}_i$ in the resolutions of $M_{n-2k+1}$ and $M_{-1}$ constructed in the proof of Theorem~\ref{thm:self-dual-Ulrich}.
\end{proof}

\section{Syzygies of canonical curves}\label{sec:syzygies}

In this section we summarize some recent applications of Hermite reciprocity to Green's conjecture for generic canonical curves (see \cite{green} for the original reference, and \cites{eis-orientation,ein-laz} and \cite[Chapter~9]{DE-syzygies} for some expository accounts). The first proof is due to Voisin \cite{voisin-JEMS, voisin-odd} (using different methods), and a streamlined version of Voisin's arguments was recently explained in \cite{kemeny}. The approaches we discuss here are more elementary and also deal with the problem in positive characteristics. Throughout the section we work over an algebraically closed field $\kk$ and assume that $\rm{char}(\kk) \ne 2$.

Let $C$ be a smooth curve of genus $g\geq 3$ over $\kk$, and let $\omega_C$ be its canonical bundle. The canonical ring $S_C = \bigoplus_{n \ge 0}\rH^0(C, \omega_C^{\otimes n})$ is finitely generated over the $g$-dimensional polynomial ring $S = \Sym(\rH^0(C, \omega_C))$ and hence we can define the graded Betti numbers
\[
  \beta_{i,j}(C,\omega_C) = \dim_\kk \Tor_i^S(\kk, S_C)_j.
\]
Green's conjecture asserts that, when $\rm{char}(\kk)=0$, we have $\beta_{i,i+2}(C,\omega_C) = 0$ for $i<\rm{Cliff}(C)$, where $\rm{Cliff}(C)$ is the Clifford index of $C$. We do not need the definition of the Clifford index here, but instead note that for most curves, we have $\rm{Cliff}(C) = d - 2$ where $d$ is the gonality of~$C$ (recall that the \defi{gonality} of an algebraic curve $C$ is the minimum degree of a non-constant map from $C$ to $\PP^1$; here ``most'' means that it holds for a Zariski open subset of the locus of curves of each fixed gonality in the moduli space of curves).

To show that Green's conjecture holds generically, that is, for a non-empty Zariski open subset of curves in the moduli space of curves, one can appeal to degeneration techniques and check that it is satisfied by a single (smoothable) curve. Several examples of such curves have been proposed over the years, including rational cuspidal curves suggested independently by Buchweitz--Schreyer and O'Grady, and ribbon curves put forward by Bayer and Eisenbud~\cite{BE}. We discuss these examples separately in Section~\ref{subsec:cusp} and~\ref{subsec:ribbon} below. Examples of Schreyer show that Green's conjecture does not hold in positive characteristic~\cite{schreyer-green}, but a careful analysis of cuspidal and ribbon curves allows one to keep track effectively of the characteristics where Green's conjecture holds generically. %We will assume from now on that $g\ge 3$ since Green's conjecture is trivial if $g \le 2$. We will also assume that $\rm{char}(\kk) \ne 2$.

\subsection{Rational cuspidal curves}\label{subsec:cusp}

This section follows \cite{AFPRW}. A rational curve with $g$ simple cusps has genus $g$ and can be smoothed out, i.e., there exist flat families whose special fiber is a rational cuspidal curve $C$ and whose generic fiber is smooth. The upper bound for the Clifford index of a genus $g$ curve is $\lfloor (g-1)/2 \rfloor$. This means that it suffices to show that $\beta_{i,i+2}(C,\omega_C) = 0$ for $i < \lfloor (g-1)/2 \rfloor$. We can realize $C$ as a hyperplane section of the tangential variety of the rational normal curve in its $g$-uple embedding, so the computation can equivalently be done for the tangential variety. The advantage of the latter is that it has $\SL_2$-symmetry. 

To be precise, the \defi{tangential variety} $T_g$ is the union of all tangent lines to the rational normal curve $\Gamma$ in $\PP^g = \mathbb{P}(\Sym^g U)$. We have an inclusion of sheaves over $\PP^1 = \mathbb{P}(U)$ given by $(\Sym^{g-2} U)(-2) \to \Sym^g U \otimes \mc{O}_{\PP^1}$ whose cokernel $\mc{J}$ is the \defi{bundle of principal parts} of $\mc{O}_{\PP^1}(g)$ \cite[Section~7.2]{3264}. This induces a map $\PP(\mc{J}) \to \PP^g$ which is birational onto $T_g$.

It turns out that there is a short exact sequence
\[
  0 \to \kk[T_g] \to \widetilde{\kk[T_g]} \to \omega_\Gamma(-1) \to 0,
\]
where $\kk[T_g]$ is the homogeneous coordinate ring of $T_g$, $\widetilde{\kk[T_g]}$ is its normalization, and $\omega_\Gamma$ is the canonical module of the homogeneous coordinate ring of $\Gamma$. Hence we can use \cite[Theorem 5.1.2]{weyman} with $V = \PP^1$, $X = \bb{A}(\Sym^g U)$, $\xi = (\Sym^{g-2} U)(-2)$, and $\mc{V}=\mc{O}_{\PP^1}$ to compute Tor groups for $\widetilde{\kk[T_g]}$ over $S = \Sym(\Sym^g U)$:
\begin{align*}
  \Tor_i^S(\kk,\widetilde{\kk[T_g]})_{i+1} &= \D^{2i} U \oo \bigwedge^{i+1}(\Sym^{g-2} U) \qquad \text{for } i=0,\dots,g-2.
\end{align*}
All other Tor groups vanish except $\Tor_0^S(\kk,\widetilde{\kk[T_g]})_0 = \kk$.
The canonical module $\omega_\Gamma$ can be realized as $\bigoplus_{n \ge 0} \rH^0(\PP^1, \mc{O}_{\PP^1}(ng+g-2))$, so again using \cite[Theorem 5.1.2]{weyman} with $V = \PP^1$, $X = \bb{A}(\Sym^g U)$, $\xi = (\Sym^{g-1} U)(-1)$, and $\mc{V}= \mc{O}_{\PP^1}(g-2)$, its Tor groups can be computed:
\[
  \Tor_i^S(\kk,\omega_\Gamma(-1))_{i+1} = \Tor_i^S(\kk,\omega_\Gamma)_i =  \bigwedge^{i}(\Sym^{g-1} U) \otimes \Sym^{g-2-i}(U) \qquad \text{for } i=0,\dots,g-2.
\]

Using the long exact sequence on Tor, our goal then is to show that the map
\begin{equation}\label{eq:delta-tor}
  \Tor_i^S(\kk,\widetilde{\kk[T_g]})_{i+1} \to \Tor_i^S(\kk,\omega_\Gamma(-1))_{i+1}
\end{equation}
is surjective for $i \le \lfloor (g-1)/2 \rfloor$. Using our computations above, the terms are
\[
  \D^{2i} U \oo \bigwedge^{i+1}(\Sym^{g-2} U) \to \bigwedge^i(\Sym^{g-1} U) \otimes \Sym^{g-2-i}(U).
\]
Describing these maps directly is quite subtle! What is done in \cite{AFPRW} is to realize
\[\bigwedge^i(\Sym^{g-1} U) \otimes \Sym^{g-2-i}(U) = \ker\left(\D^{i+1}U\oo\bw^{i+1}(\Sym^{g-1}U) \overset{\eqref{eqn:hermite-mult}}{\lra} \bw^{i+1}(\Sym^{g}U)\right),\]
and rewrite \eqref{eq:delta-tor} as the (middle) homology of a $3$-term complex
\[ \D^{2i} U \oo \bigwedge^{i+1}(\Sym^{g-2} U) \to \D^{i+1}U\oo\bw^{i+1}(\Sym^{g-1}U) \overset{\eqref{eqn:hermite-mult}}{\lra} \bw^{i+1}(\Sym^{g}U),\]
where the first map comes from the inclusion $\D^{2i} U \to \D^{i+1}U \oo \D^{i+1}U$, followed by \eqref{eqn:hermite-mult}. Taking the direct sum over all $g$ as in Section~\ref{sec:hermite-alg}, and using the Hermite isomorphism, this gets identified with a $3$-term complex of free modules over $\tilde{S} = \Sym(\D^{i+1} U)$:
\[ \D^{2i}U \oo \tilde{S}(-2) \lra \D^{i+1}U \oo \tilde{S}(-1) \lra \tilde{S},\]
which a subcomplex of the Koszul complex for the maximal ideal of $\tilde{S}$. The (middle) homology is a \defi{Weyman module $W^{(i+1)}$}, which is a special case of a \defi{Koszul module} considered in \cite{AFPRW}. Using general vanishing results for Koszul modules, for which we refer the reader to \cite[Section~2]{AFPRW}, one gets that \eqref{eq:delta-tor} is surjective when $i \le \lfloor (g-1)/2 \rfloor$ (and $\rm{char}(\kk)=0$ or $\rm{char}(\kk) \ge \frac{g+2}{2}$), as desired.

%To describe these maps, it suffices to find any maps that induce a morphism of complexes $F_\bullet \to G_\bullet$. The key first idea is to take the direct sum over all $g$, so that both terms become free modules over $\Sym(\D^{i+1} U)$ via the multiplication map described in Section~\ref{sec:hermite-alg}. We refer the reader to \cite{AFPRW} for the rest of the proof that we can find a $\Sym(\D^{i+1} U)$-linear map which is surjective for $i \le \lfloor (g-1)/2 \rfloor$ (and $\rm{char}(\kk)=0$ or $\rm{char}(\kk) \ge \frac{g+2}{2}$) and gives morphisms of complexes, as desired. 

%This vanishing then proves the same for an open set of smooth curves whose gonality is the maximum possible value.

\subsection{Ribbon curves}\label{subsec:ribbon}

This section follows \cite{SR-bigraded}. A ribbon structure on a variety $X$ is a non-reduced scheme whose structure sheaf is a square-zero extension of $\mc{O}_X$ by some line bundle on $X$ \cite[Section~1]{BE}. We will be interested in ribbon structures on $\PP^1$, which are called \defi{rational ribbon curves}. They offer more flexibility than cuspidal curves, in that they allow us to keep track not only of the genus, but also of the gonality.

Given an integer $a \le (g-1)/2$, we consider the projective space 
\[\PP^{g}=\mathbb{P}(\Sym^a U \oplus \Sym^{g-1-a} U).\]
We have the Veronese embeddings $v_1 \colon \PP^1 \to \mathbb{P}(\Sym^a U)$ and $v_2 \colon \PP^1 \to \mathbb{P}(\Sym^{g-1-a} U)$ into the corresponding subspaces, and for each $x\in \PP^1$ we get a line joining $v_1(x)$ and $v_2(x)$. The \defi{rational normal scroll} $\mc{S}(a,g-1-a)$ is the union of these lines as we vary over $x \in \PP^1$ \cite{EH-centennial}. Let $B$ be the coordinate ring of $\mc{S}(a,g-1-a)$. There is a ribbon structure $\mc{X}(a,g-1-a)$ on $\mc{S}(a,g-1-a)$, called a \defi{K3 carpet}, whose coordinate ring $A$ fits into a short exact sequence
\[
  0 \to \omega_B \to A \to B \to 0
\]
where $\omega_B$ is the canonical module of $B$. Most importantly, a hyperplane section of the K3 carpet $\mc{X}(a,g-1-a)$ gives a genus $g$ canonically embedded ribbon that can be smoothed out to a curve of Clifford index $a$. As in Section~\ref{subsec:cusp}, we can reduce to the study of the syzygies of the K3 carpet, which like $T_g$, has $\SL_2$-symmetry.

The multiplication map
\[
  U \otimes (\Sym^{a-1} U \oplus \Sym^{g-2-a} U) \to \Sym^a U \oplus \Sym^{g-1-a} U
\]
gives a $2 \times (g-1)$ matrix whose entries are linear forms in $\PP^g$, and the Eagon--Northcott complex for this matrix gives a minimal free resolution for $B$. Hence, if we let
\[S = \Sym(\Sym^a U \oplus \Sym^{g-1-a} U)\]
denote the coordinate ring of $\PP^g$, then we have for $i \ge 1$
\begin{align*}
  \Tor_i^S(\kk, B)_{i+1} &= \D^{i-1} U \otimes \bigwedge^{i+1}(\Sym^{a-1} U \oplus \Sym^{g-2-a} U),\\
  \Tor_i^S(\kk,\omega_B)_{i+2} &= \Sym^{g-3-i} U \otimes \bigwedge^i(\Sym^{a-1} U \oplus \Sym^{g-2-a} U).
\end{align*}
As in the previous section, using the long exact sequence on Tor, our goal becomes to show that the connecting homomorphism
\begin{equation}\label{eq:delta-Tor-for-B}
  \Tor_{i+1}^S(\kk,B)_{i+2} \to \Tor_i^S(\kk,\omega_B)_{i+2}
\end{equation}
is surjective for $i < a$. However, it is better to view all the Tor groups as being bigraded, using the usual decomposition of exterior powers 
\[\bigwedge^n(V \oplus W) = \bigoplus_{n'+n''=n} \bigwedge^{n'} V \otimes \bigwedge^{n''} W,\]
with $V=\Sym^{a-1} U$ and $W=\Sym^{g-2-a} U$. The map then looks like
\begin{align}
  \bigoplus_{\substack{u+v=i\\u,v \ge -1}} \D^i U \otimes \bigwedge^{u+1}(\Sym^{a-1} U) \otimes \bigwedge^{v+1} (\Sym^{g-2-a} U)\to \label{eq:source-TorB} \\ 
  \bigoplus_{u+v=i} \Sym^{g-3-i} U \otimes \bigwedge^u(\Sym^{a-1} U) \otimes \bigwedge^v( \Sym^{g-2-a} U), \label{eq:target-Tor-omegaB}
\end{align}
and we can concentrate on a specific $(u,v)$-bigraded component, while taking the direct sum over all $a,g$. Via Hermite reciprocity as in Section~\ref{sec:hermite-alg}, the source \eqref{eq:source-TorB} becomes a free module over $\tilde{S}=\Sym(\D^{u+1} U \oplus \D^{v+1} U)$, more precisely $\D^{u+v}U \oo \tilde{S}(-1,-1)$. The target \eqref{eq:target-Tor-omegaB} also becomes a finitely generated $\tilde{S}$-module, though this is not at all obvious! In fact, it can be identified with the (middle) homology of 
\[ \D^{u+v+2}U\oo\tilde{S}(-1,-1) \lra \D^{u+1}U\oo\tilde{S}(-1,0) \oplus \D^{v+1}U\oo\tilde{S}(0,-1) \lra \tilde{S},\]
where the maps are now completely transparent ($\D^{u+v+2}U$ embeds into $\D^{u+1}U\oo\D^{v+1}U$ and $\D^{v+1}U\oo\D^{u+1}U$ via comultiplication). Letting $Q_{u,v}=\D^{u+v+2}U\oplus\D^{u+v}U$ (or rather an appropriate $\SL_2$-equivariant extension of the summands) leads to a subcomplex of the (bi-graded) Koszul complex of $\tilde{S}$: 
\[ Q_{u,v}\oo\tilde{S}(-1,-1) \lra \D^{u+1}U\oo\tilde{S}(-1,0) \oplus \D^{v+1}U\oo\tilde{S}(0,-1) \lra \tilde{S}.\]
Its middle homology is the \defi{bi-graded Weyman module $W^{(u+1,v+1)}$}, a special instance of a \defi{bi-graded Koszul module}, for which appropriate vanishing theorems are established in \cite[Section~3]{SR-bigraded}. Based on this, it can be shown that the desired surjectivity of \eqref{eq:delta-Tor-for-B} holds when $\rm{char}(\kk)=0$ or $\rm{char}(\kk) \ge a$, but we refer to \cite{SR-bigraded} for details.

%The multiplication map from Section~\ref{sec:hermite-alg} can be used to show that the domain is a free $\Sym(\D^{u+1} U \oplus \D^{v+1} U)$-module. It turns out that the target is also a finitely generated module over the same ring, though this is not at all obvious. Furthermore, the desired surjectivity holds when $\rm{char}(\kk)=0$ or $\rm{char}(\kk) \ge a$, but we refer to \cite{SR-bigraded} for details. 

We finish by noting that this improves the approach via cuspidal curves in two ways. First, the bound on the characteristic for generic curves is slightly better than the one given by rational cuspidal curves since there we have $a = \lfloor (g-1)/2 \rfloor$, and in particular it confirms a conjecture from \cite{ES-carpets}. Second, this allows us to prove that generic Green's conjecture holds for each gonality, and not just the maximum value.

\section*{Acknowledgements}
We thank Giorgio Ottaviani, Rob Lazarsfeld and Jerzy Weyman for helpful discussions. Experiments with the computer algebra software Macaulay2 \cite{GS} have provided numerous valuable insights.

	\begin{bibdiv}
		\begin{biblist}

\bib{AFPRW}{article}{
   author={Aprodu, Marian},
   author={Farkas, Gavril},
   author={Papadima, \c{S}tefan},
   author={Raicu, Claudiu},
   author={Weyman, Jerzy},
   title={Koszul modules and Green's conjecture},
   journal={Invent. Math.},
   volume={218},
   date={2019},
   number={3},
   pages={657--720},
   note={\arxiv{1810.11635v3}}
}

\bib{BE}{article}{
   author={Bayer, Dave},
   author={Eisenbud, David},
   title={Ribbons and their canonical embeddings},
   journal={Trans. Amer. Math. Soc.},
   volume={347},
   date={1995},
   number={3},
   pages={719--756},
}

\bib{BHU}{article}{
   author={Brennan, Joseph P.},
   author={Herzog, J\"{u}rgen},
   author={Ulrich, Bernd},
   title={Maximally generated Cohen-Macaulay modules},
   journal={Math. Scand.},
   volume={61},
   date={1987},
   number={2},
   pages={181--203},
}

\bib{conca}{article}{
   author={Conca, Aldo},
   title={Straightening law and powers of determinantal ideals of Hankel
   matrices},
   journal={Adv. Math.},
   volume={138},
   date={1998},
   number={2},
   pages={263--292},
}

\bib{CMSV}{article}{
   author={Conca, Aldo},
   author={Mostafazadehfard, Maral},
   author={Singh, Anurag K.},
   author={Varbaro, Matteo},
   title={Hankel determinantal rings have rational singularities},
   journal={Adv. Math.},
   volume={335},
   date={2018},
   pages={111--129},
   note={\arxiv{1709.05685v3}}
}

\bib{ein-laz-gonality}{article}{
   author={Ein, Lawrence},
   author={Lazarsfeld, Robert},
   title={The gonality conjecture on syzygies of algebraic curves of large
   degree},
   journal={Publ. Math. Inst. Hautes \'{E}tudes Sci.},
   volume={122},
   date={2015},
   pages={301--313},
   note={\arxiv{1407.4445v1}}
}

\bib{ein-laz}{article}{
   author={Ein, Lawrence},
   author={Lazarsfeld, Robert},
   title={Tangent developable surfaces and the equations defining algebraic
   curves},
   journal={Bull. Amer. Math. Soc. (N.S.)},
   volume={57},
   date={2020},
   number={1},
   pages={23--38},
   note={\arxiv{1906.05429v1}}
}

\bib{ENP}{article}{
   author={Ein, Lawrence},
   author={Niu, Wenbo},
   author={Park, Jinhyung},
   title={Singularities and syzygies of secant varieties of nonsingular
   projective curves},
   journal={Invent. Math.},
   volume={222},
   date={2020},
   number={2},
   pages={615--665},
   note={\arxiv{2005.10906v1}}
}

\bib{eis-lin-sec}{article}{
   author={Eisenbud, David},
   title={Linear sections of determinantal varieties},
   journal={Amer. J. Math.},
   volume={110},
   date={1988},
   number={3},
   pages={541--575},
}

\bib{eis-orientation}{article}{
   author={Eisenbud, David},
   title={Green's conjecture: an orientation for algebraists},
   conference={
      title={Free resolutions in commutative algebra and algebraic geometry},
      address={Sundance, UT},
      date={1990},
   },
   book={
      series={Res. Notes Math.},
      volume={2},
      publisher={Jones and Bartlett, Boston, MA},
   },
   date={1992},
   pages={51--78},
}

\bib{DE-syzygies}{book}{
   author={Eisenbud, David},
   title={The geometry of syzygies},
   series={Graduate Texts in Mathematics},
   volume={229},
   note={A second course in commutative algebra and algebraic geometry},
   publisher={Springer-Verlag, New York},
   date={2005},
   pages={xvi+243},
}

\bib{EH-centennial}{article}{
   author={Eisenbud, David},
   author={Harris, Joe},
   title={On varieties of minimal degree (a centennial account)},
   conference={
      title={Algebraic geometry, Bowdoin, 1985},
      address={Brunswick, Maine},
      date={1985},
   },
   book={
      series={Proc. Sympos. Pure Math.},
      volume={46},
      publisher={Amer. Math. Soc., Providence, RI},
   },
   date={1987},
   pages={3--13},
}

\bib{3264}{book}{
   author={Eisenbud, David},
   author={Harris, Joe},
   title={3264 and all that---a second course in algebraic geometry},
   publisher={Cambridge University Press, Cambridge},
   date={2016},
   pages={xiv+616},
}

\bib{ES-JAMS}{article}{
   author={Eisenbud, David},
   author={Schreyer, Frank-Olaf},
   title={Betti numbers of graded modules and cohomology of vector bundles},
   journal={J. Amer. Math. Soc.},
   volume={22},
   date={2009},
   number={3},
   pages={859--888},
   note={\arxiv{0712.1843v3}}
}

\bib{ES-carpets}{article}{
   author={Eisenbud, David},
   author={Schreyer, Frank-Olaf},
   title={Equations and syzygies of K3 carpets and unions of scrolls},
   journal={Acta Math. Vietnam},
   volume={44},
   date={2019},
   number={1},
   pages={3--29},
   note={\arxiv{1804.08011v1}}
}

\bib{FH}{book}{
   author={Fulton, William},
   author={Harris, Joe},
   title={Representation theory},
   series={Graduate Texts in Mathematics},
   volume={129},
   note={A first course;
   Readings in Mathematics},
   publisher={Springer-Verlag, New York},
   date={1991},
   pages={xvi+551},
}

\bib{green}{article}{
   author={Green, Mark L.},
   title={Koszul cohomology and the geometry of projective varieties},
   journal={J. Differential Geom.},
   volume={19},
   date={1984},
   number={1},
   pages={125--171},
}

\bib{GP}{article}{
   author={Gruson, Laurent},
   author={Peskine, Christian},
   title={Courbes de l'espace projectif: vari\'{e}t\'{e}s de s\'{e}cantes},
   language={French},
   conference={
      title={Enumerative geometry and classical algebraic geometry (Nice,
      1981)},
   },
   book={
      series={Progr. Math.},
      volume={24},
      publisher={Birkh\"{a}user, Boston, Mass.},
   },
   date={1982},
   pages={1--31},
 }

\bib{HUB}{article}{
   author={Herzog, J.},
   author={Ulrich, B.},
   author={Backelin, J.},
   title={Linear maximal Cohen-Macaulay modules over strict complete
   intersections},
   journal={J. Pure Appl. Algebra},
   volume={71},
   date={1991},
   number={2-3},
   pages={187--202},
}

\bib{GS}{article}{
          author = {Grayson, Daniel R.},
          author = {Stillman, Michael E.},
          title = {Macaulay 2, a software system for research
                   in algebraic geometry},
          journal = {Available at \url{http://www.math.uiuc.edu/Macaulay2/}}
        }

\bib{kemeny}{article}{
          author = {Kemeny, Michael},
          title = {Universal Secant Bundles and Syzygies of Canonical Curves},
          journal = {preprint},
          note = {\arxiv{2003.05849v5}},
         date={2020},
       }

\bib{ott-val}{article}{
          author = {Ottaviani, Giorgio},
          author = {Vall\'es, Jean},
          title = {Moduli of vector bundles and group action},
          journal = {Available at \url{http://web.math.unifi.it/users/ottaviani/wykno/wykno24feb.pdf}}
   date={2006},
        }

\bib{SR-bigraded}{article}{
          author = {Raicu, Claudiu},
          author = {Sam, Steven V},
          title = {Bi-graded Koszul modules, K3 carpets, and Green's conjecture},
          journal = {preprint},
          note = {\arxiv{1909.09122v1}},
         date={2019},
       }
       
\bib{schreyer-green}{article}{
   author={Schreyer, Frank-Olaf},
   title={Syzygies of canonical curves and special linear series},
   journal={Math. Ann.},
   volume={275},
   date={1986},
   number={1},
   pages={105--137},
}

\bib{sch}{article}{
   author={Schwarzenberger, R. L. E.},
   title={Vector bundles on the projective plane},
   journal={Proc. London Math. Soc. (3)},
   volume={11},
   date={1961},
   pages={623--640},
}

\bib{SUV}{article}{
   author={Simis, Aron},
   author={Ulrich, Bernd},
   author={Vasconcelos, Wolmer V.},
   title={Jacobian dual fibrations},
   journal={Amer. J. Math.},
   volume={115},
   date={1993},
   number={1},
   pages={47--75},
}

\bib{stacks-project}{misc}{
  author       = {The {Stacks project authors}},
  title        = {The Stacks project},
  note = {\url{https://stacks.math.columbia.edu}},
  year         = {2020},
}

\bib{valles}{article}{
   author={Vall\`es, Jean},
   title={Fibr\'{e}s de Schwarzenberger et coniques de droites sauteuses},
   language={French, with English and French summaries},
   journal={Bull. Soc. Math. France},
   volume={128},
   date={2000},
   number={3},
   pages={433--449},
}

\bib{vasc}{article}{
   author={Vasconcelos, Wolmer V.},
   title={On the equations of Rees algebras},
   journal={J. Reine Angew. Math.},
   volume={418},
   date={1991},
   pages={189--218},
}

\bib{voisin-JEMS}{article}{
   author={Voisin, Claire},
   title={Green's generic syzygy conjecture for curves of even genus lying
   on a $K3$ surface},
   journal={J. Eur. Math. Soc. (JEMS)},
   volume={4},
   date={2002},
   number={4},
   pages={363--404},
   note={\arxiv{math/0205330v1}}
 }

 \bib{voisin-odd}{article}{
   author={Voisin, Claire},
   title={Green's canonical syzygy conjecture for generic curves of odd
   genus},
   journal={Compos. Math.},
   volume={141},
   date={2005},
   number={5},
   pages={1163--1190},
   note={\arxiv{math/0301359v4}}
}

\bib{watanabe}{article}{
   author={Watanabe, Junzo},
   title={Hankel matrices and Hankel ideals},
   journal={Proc. School Sci. Tokai Univ.},
   volume={32},
   date={1997},
   pages={11--21},
}

\bib{weyman}{book}{
   author={Weyman, Jerzy},
   title={Cohomology of Vector Bundles and Syzygies},
   series={Cambridge Tracts in Mathematics},
   volume={149},
   publisher={Cambridge University Press, Cambridge},
   date={2003},
   pages={xiv+371},
}

\end{biblist}
\end{bibdiv}

\end{document}